\theoremstyle{plain}
\newtheorem{Sat}{Satz}[section]
\newtheorem{Thm}[Sat]{Theorem}
\newtheorem{Lem}[Sat]{Lemma}
\newtheorem{Pro}[Sat]{Proposition}
\newtheorem{Cor}[Sat]{Corollary}
\newtheorem{DefThm}[Sat]{Definition and Theorem}
\newtheorem{Def}[Sat]{Definition}
\newtheorem{Ass}[Sat]{Assumption}
\theoremstyle{remark}
\newtheorem{Ex}[Sat]{Example}
\newtheorem{Rem}[Sat]{Remark}
\newcommand{\BBA}{\citingnopage{BB18_2}{Proposition~2.21}} 
\newcommand{\BBB}{\citingnopage{BB18_2}{Theorem~2.14}} 
\newcommand{\BBC}{\citingnopage{BB18_2}{Eqn.~(5)}} 
\newcommand{\BBD}{\citingnopage{BB18_2}{Proposition~3.2~(i)}} 
\newcommand{\BBF}{\citingnopage{BB18_2}{Proposition~3.7}} 
\newcommand{\BBG}{\citingnopage{BB18_2}{Proposition~3.2~(ii)}} 
\newcommand{\BBH}{\citingnopage{BB18_2}{Definition~2.30}} 
\newcommand{\BBI}{\citingnopage{BB18_2}{Proposition~2.28~(ii)}} 
\newcommand{\BBJ}{\citingnopage{BB18_2}{Proposition~2.31}} 
\newcommand{\BBK}{\citingnopage{BB18_2}{Theorem~2.39}} 
\newcommand{\BBL}{\citingnopage{BB18_2}{Proposition~2.26~(ii)}} 
\newcommand{\BBM}{\citingnopage{BB18_2}{Lemma 3.4~(ii)}}
\newcommand{\EW}{\mathbb{E}}
\newcommand{\WM}{\mathbb{P}}
\newcommand{\NZ}{\mathbb{N}}
\newcommand{\RZ}{\mathbb{R}}
\newcommand{\md}{\mathrm{d}}
\newcommand{\mdS}{{^\ast\!\mathrm{d}}}
\newcommand{\mdST}{{_{\ast}\mathrm{d}}}
\newcommand{\FA}{\mathcal{F}}
\newcommand{\aFA}{\mathcal{F}^\Lambda}
\newcommand{\aFAe}{\mathcal{F}^{\Lambda^\eta}}
\newcommand{\midG}{\,\middle|\,}
\newcommand{\uP}{c_0}
\newcommand{\ldc}[1]{#1^{\text{c,l}}}
\newcommand{\rdc}[1]{#1^{\text{r}}}
\newcommand{\dc}[1]{#1^{\text{c}}}
\newcommand{\ld}[1]{#1^{\text{l}}}
\newcommand{\st}{\mathcal{S}}
\newcommand{\stm}{\mathcal{S}^\Lambda}
\newcommand{\stp}{\mathcal{S}^{\mathcal{P}}}
\newcommand{\stme}{\mathcal{S}^{\Lambda^\eta}}
\newcommand{\stmd}{{{\mathcal{S}^{\Lambda,\mathrm{div}}}}}	
\newcommand{\Vn}{\widetilde{V}}
\newcommand{\Cn}{\widetilde{\mathcal{C}}(c_0)}
\newcommand{\Vm}{V}
\newcommand{\Cm}{\mathcal{C}(c_0)}
\newcommand{\cem}{\dagger}
\newcommand{\essinf}{\mathop{\mathrm{essinf}}}
\newcommand{\esssup}{\mathop{\mathrm{esssup}}}
\newcommand{\argmax}{\mathop{\mathrm{argmax}}}
\newcommand{\stsetRO}[2]{\left\llbracket #1,#2 \right\llbracket}
\newcommand{\stsetC}[2]{\left\llbracket #1,#2 \right\rrbracket}
\newcommand{\stsetO}[2]{\left\rrbracket #1,#2 \right\llbracket}
\newcommand{\stsetG}[1]{\left\llbracket #1 \right\rrbracket}
\newcommand{\lsl}[1]{{^\ast #1}}
\newcommand{\lsr}[2]{{#1^\ast_{#2}}}
\newcommand{\citing}[3]{\cite{#1}, #2, p.#3}
\newcommand{\citingnopage}[2]{\cite{#1}, #2}
\begin{document}

\begin{frontmatter}
\title{Modelling information flows by Meyer-$\sigma$-fields  in the singular stochastic control problem of irreversible investment}
\runtitle{Information Flows in Singular Control }

\begin{aug}
\author[A]{\fnms{Peter} \snm{Bank}\ead[label=e1]{bank@math.tu-berlin.de}}
\and
\author[A]{\fnms{David} \snm{Besslich}\ead[label=e2,mark]{besslich@math.tu-berlin.de}}
\address[A]{Technische Universit{\"a}t
    Berlin, Institut f{\"u}r Mathematik, Stra{\ss}e des 17. Juni 136,
    10623 Berlin, Germany, \printead{e1,e2}}

\end{aug}

\begin{abstract}
	In stochastic control problems delicate issues arise when the controlled system can jump due to both exogenous shocks and endogenous controls. Here one has to specify what the controller knows when about the exogenous shocks and how and when she can act on this information. We propose to use Meyer-$\sigma$-fields as a flexible tool to model information flow in such situations. The possibilities of this approach are illustrated first in a very simple linear stochastic control problem and then in a fairly general formulation for the singular stochastic control problem of irreversible investment with inventory risk. For the latter, we illustrate in a first case study how different signals on exogenous jumps lead to different optimal controls, interpolating between the predictable and the optional case in a systematic manner.
\end{abstract}

\begin{keyword}[class=MSC2010]
\kwd{93E20}
\kwd{60H30}
\kwd{91B70}
\end{keyword}

\begin{keyword}
\kwd{Stochastic control}
\kwd{Meyer-$\sigma$-fields}
\kwd{l\`adl\`ag controls}
\kwd{irreversible investment with inventory risk}
\end{keyword}

\end{frontmatter}


\section{Introduction}

In stochastic control problems one seeks to influence a given system in an optimal way while taking into account the dynamically revealed information on this system. It is clear that this information flow is crucial for the determination of which controls can be used at all and for what an optimal control looks like. Of particular importance are moments in time where significant new information becomes  available, for instance, on an impending exogenous jump. If the controller is restricted to predictable controls, she can only react after the jump has hit the system. In the case of optional controls she can react to jumps as they happen. Apart from these classical choices, it is perfectly conceivable though that the controller at times receives a signal on the upcoming jump that she can use for a proactive intervention and then still react after the jump is fully revealed. 

We show how one can use Meyer-$\sigma$-fields $\Lambda$ (introduced in \cite{EL80}) embedded between the optional and predictable $\sigma$-field  to model information in such situations. As a toy example, we consider a simple linear control problem, which we also use to introduce the basic tools from the \emph{th\'eorie g\'en\'erale des processus} which pertain to Meyer-$\sigma$-fields (e.g.\ \cite{BS77}, \cite{DM78}, \cite{EL80}, \cite{EK81}, \cite{DM82}). 
For a more serious control problem, we discuss in depth an irreversible investment problem with inventory risk. Irreversible investment problems have been considered in great detail in the literature before (e.g.\ \cite{arr66}, \cite{DP_94}, \cite{Ber98}, \cite{MZ_07}, \cite{RX11}, \cite{FP_14}, \cite{Fer15}, \cite{AZ_17}, \cite{DFF_17}). This kind of problem can be formulated
as  the task to
\begin{align}\label{Main:129}
	\text{Maximize}\qquad \Vn(\tilde{C}):=\EW\left[\int_{[0,\infty)} 
	P_t \md \tilde{C}_{t}
	-\int_{[0,\infty)}\rho_t(\tilde{C}_t)  \md R_t
	\right],
\end{align}
over $\Lambda$-measurable, increasing, right-continuous controls $\tilde{C}$ starting at a given level $c_0$. 
Here, the integral over $\rho$ yields a convex risk assessment for a given control with the integrator $R$ serving as a risk clock; the process $P$ describes the rewards accrued from an increase in the control $\tilde{C}$. It is conceivable and, in fact perfectly natural for the examples in this paper,  that the reward process is not fully observable to the controller when she makes her decisions. Instead, she will work with the reward process's $\Lambda$-projection ${^\Lambda P}$ in~\eqref{Main:129}, which is observable to her. To ensure existence of optimal controls we impose suitable mathematical semi-continuity assumptions on the reward process, but we also need to relax the set of controls to include also increasing controls $C$ which are not right-continuous. We introduce a suitable integral $\int_{[0,\infty)}{^\Lambda P}\ \mdS C$ for such relaxed controls with possible double jumps (see Definition~\ref{Main:139}) and show that the ensuing relaxed optimization problem
 \begin{align}\label{Main:136}
    \text{Maximize}\qquad V(C):=\EW\left[\int_{[0,\infty)} 
	{^\Lambda P}_t \mdS C_{t}
	-\int_{[0,\infty)}\rho_t\left(C_t\right)  \md R_t
	\right]
\end{align}
over suitable increasing $\Lambda$-measurable $C$
has the same value as the original problem~\eqref{Main:129}. An optimal control for~\eqref{Main:136} is constructed in terms of the solution of a closely related stochastic representation problem, following an approach first studied in \cite{BR01}. The general setting here requires a considerably refined argument though, which is based on a similarly refined version of the representation theorem from \cite{BK04} which we study in the companion paper~\cite{BB18}. 
 
 To illustrate this general result by a nontrivial explicit example, we focus on the special case $\rho_t(c):=\frac12c^2$ and let $P$ be a discounted compound Poisson process with initial value $\tilde{p}$, i.e.~$P_t: =\mathrm{e}^{-rt}\tilde{P}_t:=\mathrm{e}^{-rt}(\tilde{p}+\sum_{i=1}^{N_t}Y_i)$ with i.i.d.\ $Y_i \in \mathrm{L}^2(\WM)$, $i\in \NZ$, independent of the Poisson process $N$, that also drives our risk clock $R_t:=\int_{(0,t]}\mathrm{e}^{-rs}\md N_s$ with discount rate $r>0$. Apart from the classical choices of predictable and optional controls, we consider $\Lambda^\eta$-measurable controls where the Meyer-$\sigma$-field $\Lambda^\eta$ is defined by 
\begin{align}
	\Lambda^\eta := 
	{^\WM \sigma\left(Z \text{ c\`adl\`ag and $\tilde{\FA}^\eta$-measurable}\right)}
\end{align}
for a fixed sensor sensitivity $\eta \in [0,\infty]$, where $\tilde{\FA}^\eta$ is generated by
$\tilde{P}^\eta:=\tilde{P}_-+\Delta\tilde{P}\mathbb{1}_{\{|\Delta \tilde{P}|\geq \eta\}}$ and $\WM$ symbolizes that we consider the $\WM$-completion of the $\sigma$-field at hand (see Definition and Theorem \ref{Main:107} below).  
A controller with information flow $\Lambda^\eta$ may receive a warning about an impending jump, namely when the jump's absolute value is at least $\eta$.
The case $p(\eta):=\WM(|Y_1|\leq \eta)=1$ corresponds to the predictable-$\sigma$-field while $p(\eta)=0$ leads to the optional-$\sigma$-field. For any sensitivity $\eta$, we derive a closed-form solution to the stochastic representation problem associated with this example and thus obtain an explicit optimal control for the optimization problem. In general, this optimal control turns out to be neither left-continuous nor right-continuous; instead, it is merely l\`adl\`ag. Hence, optimal controls may exhibit ``double jumps'' which correspond to the controller's ability to proactively intervene to reduce the risk before the risk clock ``rings'' and to adjust her position afterwards in order to benefit from higher rewards available then. As one intuitively would expect, we indeed find a variety of optimal controls as we vary the sensitivity $\eta$ of the considered jump sensor, allowing us to assess, for instance, how much in extra value for the controller can be generated by a further improvement in sensor sensitivity. 
	
The article is structured in the following way. In Section~\ref{sec:1} we consider a toy example illustrating the idea of using Meyer-$\sigma$-fields in optimal control problems and we introduce along the way the basic notions from the theory of Meyer-$\sigma$-fields. In  Section~\ref{sec:1.2} we formulate a general irreversible investment problem with inventory risk and we show how to reduce it to a suitable representation problem. In Section~\ref{sec:Poisson} we give an explicit example for the solution of this problem, where the reward process is given by a compound Poisson process. In Appendix~\ref{app:integration} we collect some results concerning the special $\mdS C$-integral we have introduced for l\`adl\`ag controls $C$.

\section{A general optimal control framework with Meyer-$\sigma$-fields}\label{sec:1}
		
In this section we motivate and develop a continuous-time
framework for the flow of information in 
optimal control problems by using Meyer-$\sigma$-fields. This framework is first illustrated using a toy example and will be fully exploited in Section~\ref{sec:1.2}, where we formulate a singular stochastic control problem, namely an irreversible investment problem with inventory risk.

 Uncertainty is described by a filtered probability space 
$(\Omega,\mathbb{F}, 
\FA:=(\FA_t)_{t\geq 0},\WM)$ with $\mathbb{F}:=\bigvee_{t} \FA_t$
and $(\FA_t)_{t\geq 0}$ satisfying
the usual conditions of right-continuity and completeness. 
The filtration $\FA$ can be thought of as the
information flow from observing the exogenous noise driving the controlled system.
The immediacy with which this information can be acted upon by the
controller is clearly crucial for the optimization problem to be 
studied, particularly in a 
setting with jumps. 

To illustrate this, let us give a toy example and consider a compound Poisson process
\[
	\tilde{P}_t=\tilde{p}+\sum_{k=1}^{N_t}Y_k,\quad t\geq 0,
\]
where $\tilde{p}\in \RZ$ and where the i.i.d.\ uniformly distributed jumps $Y_k\sim U[-1,1]$, $k=1,2,\dots$ are independent from the
Poisson process	$N$ with intensity $\lambda>0$.
Let $\FA$ be the augmented filtration generated by
$\tilde{P}$. Let us study how to maximize
\[
	\EW\left[\int_{[0,1]} C_s\md \tilde{P}_s\right]
\]
over controls $C$ with $|C_s|\leq 1$, $0\leq s\leq 1$.
When restricted to $\FA$-predictable 
controls $C$,  $\int_{(0,\cdot]} C_s\md \tilde{P}_s$
is obviously an $\mathbb{F}$-martingale and so
\[
	\EW\left[\int_{[0,1]} C_s\md \tilde{P}_s\right]=0
\]
for \emph{any} such control. By contrast, when controls are
allowed to be optional, we can estimate 
\[
	\EW\left[\int_{[0,1]} C_s\md \tilde{P}_s\right]
	\leq \EW\left[\sum_{0 \leq s\leq 1} |C_s||\Delta \tilde{P}_s|\right]	
	\leq \EW\left[\sum_{0 \leq s\leq 1} |\Delta \tilde{P}_s|\right]
		=\EW\left[N_1\right]\EW[|Y_1|]=\frac{\lambda}{2}
\]
with equality holding true in all the above estimates for the
(then optimal) choice
\[
	\hat{C}_s^\mathcal{O}:=\mathrm{sgn}(\Delta \tilde{P}_s),
	\quad s\in [0,1],
\]
with $\mathrm{sgn}(0)=0$. 
Of course, it is conceivable that, rather than being able to
directly account for all jumps as in the 
optional case, the controller can, for lack of a perfect 
jump sensor, only react immediately
to large enough jumps, say those of absolute value at least
$\eta>0$. This would suggest to consider 
\begin{align}\label{Main:44}
	\hat{C}_s^{\eta}:=\mathrm{sgn}(\Delta \tilde{P}_s)
	\mathbb{1}_{\{|\Delta \tilde{P}_s|\geq \eta\}},\quad s\in [0,1],
\end{align}
as the optimal choice -- but among which controls exactly? 

This question can be answered in a precise way by 
considering Meyer-$\sigma$-fields $\Lambda$ (see 
Definition~\ref{Main:45} below)
 satisfying $\mathcal{P}(\mathcal{F})\subset 	\Lambda\subset
\mathcal{O}(\mathcal{F})$, where $\mathcal{P}(\FA)$,
$\mathcal{O}(\FA)$ denote, respectively, the predictable and the
optional $\sigma$-field associated with $\FA$. The theory of Meyer-$\sigma$-fields was initiated in \cite{EL80}.
We review and extend some of this material in the companion paper \cite{BB18_2}. 
Let us recall here the basic concepts and results.

\begin{Def}[Meyer-$\sigma$-field, \citing{EL80}{Definition 2}{502}]
\label{Main:45}
	A $\sigma$-field $\Lambda$ on $\Omega\times [0,\infty)$
	is called a \emph{Meyer-$\sigma$-field},
	if the following conditions hold:
	\begin{enumerate}[label=(\roman*)]
		\item It is generated by some right-continuous, left-limited
		(RCLL or c\`adl\`ag for short) processes.
		\item It contains 
		$\{\emptyset,\Omega\}\times \mathcal{B}([0,\infty))$,
		where $\mathcal{B}([0,\infty))$
		denotes the Borel-$\sigma$-field on $[0,\infty)$.
		\item It is stable with respect to stopping at deterministic
		time points, i.e.~for a $\Lambda$-measurable process $Z$ and for any
		$s\in [0,\infty)$, also the stopped process 
		$(\omega,t)\mapsto Z_{t\wedge s}(\omega)$ is 
		$\Lambda$-measurable.
	\end{enumerate}
\end{Def}			

Like for filtrations, also for Meyer-$\sigma$-fields there is a notion of completeness with respect to a probability measure $\WM$:
\begin{DefThm}[$\WM$-complete Meyer-$\sigma$-field, see \cite{EL80}, p.507-508]\label{Main:107}
    A Meyer-$\sigma$-field $\Lambda\subset \mathbb{F}\otimes \mathcal{B}([0,\infty))$ is called \emph{$\WM$-complete} if any process $\tilde{Z}$ which is indistinguishable
    from a $\Lambda$-measurable process $Z$ is itself already $\Lambda$-measurable. For any Meyer-$\sigma$-field $\tilde{\Lambda}\subset \mathbb{F}\otimes \mathcal{B}([0,\infty))$
    there exists a smallest $\WM$-complete Meyer-$\sigma$-field $\Lambda$ containing $\tilde{\Lambda}$; it is called the
    \emph{$\WM$-completion} of $\tilde{\Lambda}$.\qed
\end{DefThm}

\begin{Ex}[\citing{EL80}{Example}{509}]
    Let $\tilde{\FA}:=(\tilde{\FA}_t)_{t\geq 0}$ be a filtration on a probability space $(\Omega,\mathbb{F},\WM)$
    and denote by $\FA$ the smallest filtration containing $\tilde{\FA}$ that satisfies the usual conditions.
    Then the $\WM$-completion of the $\tilde{\FA}$-predictable $\sigma$-field is the $\FA$-predictable $\sigma$-field. The $\WM$-completion of the $\tilde{\FA}$-optional $\sigma$-field is contained in the $\FA$-optional $\sigma$-field; if $\Tilde{\FA}$ is right-continuous, then the $\WM$-completion of the $\tilde{\FA}$-optional $\sigma$-field is equal to the $\FA$-optional $\sigma$-field.
\end{Ex}

In our example the $\WM$-complete Meyer-$\sigma$-field encapsulating the jump information
in a convenient manner is given by 
\begin{align}\label{Main:27}
	\Lambda^\eta := 
	{^\WM \sigma\left(Z \text{ c\`adl\`ag and $\tilde{\FA}^\eta$-measurable}\right)}
\end{align}
for a fixed sensor sensitivity $\eta \in [0,\infty]$, where $\tilde{\FA}^\eta$ is generated by
$\tilde{P}^\eta:=\tilde{P}_-+\Delta\tilde{P}\mathbb{1}_{\{|\Delta \tilde{P}|\geq \eta\}}$
 and $\WM$ symbolizes that we consider the $\WM$-completion of the $\sigma$-field at hand.

That $\Lambda^\eta$ is indeed a Meyer-$\sigma$-field can be checked by
the following result:

\begin{Thm}[\citing{EL80}{Theorem 5}{509}]
	A $\sigma$-field on  
	$\Omega\times [0,\infty)$ generated by
	c\`adl\`ag processes is a $\WM$-complete Meyer-$\sigma$-field if and only if
	 it lies between the predictable
	and the optional $\sigma$-field of a filtration satisfying the usual conditions.\qed
\end{Thm}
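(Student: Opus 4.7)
The plan is to prove the equivalence in two independent directions, on each side using the generators of $\Lambda$ to negotiate between the $\sigma$-field $\Lambda$ and a suitable filtration~$\FA$.

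For the $(\Leftarrow)$ direction, I would assume that $\Lambda$ is generated by càdlàg processes and satisfies $\mathcal{P}(\FA)\subset\Lambda\subset\mathcal{O}(\FA)$ for some $\FA$ with the usual conditions. Properties (i) and (ii) of the Meyer-$\sigma$-field definition follow at once, since $\mathcal{P}(\FA)\subset\Lambda$ already contains all constant-in-$\omega$ processes with Borel-measurable time component. For property (iii), given a $\Lambda$-measurable $Z$ and $s\geq 0$, I would split
\[
    Z_{t\wedge s}(\omega)=Z_t(\omega)\,\mathbb{1}_{[0,s]}(t) + Z_s(\omega)\,\mathbb{1}_{(s,\infty)}(t);
\]
the first summand is $\Lambda$-measurable as a product of $Z$ with a $\Lambda$-measurable deterministic indicator, while the second has the $\FA_s$-measurable random variable $Z_s$ frozen in time on $(s,\infty)$, yielding a left-continuous adapted, hence predictable, hence $\Lambda$-measurable process. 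For $\WM$-completeness I would invoke that, under the usual conditions, $\mathcal{P}(\FA)$ already contains all evanescent sets via the completeness of $\FA_0$; a standard symmetric-difference decomposition then promotes indistinguishability to identical $\Lambda$-measurability.

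For the $(\Rightarrow)$ direction, suppose $\Lambda$ is a $\WM$-complete Meyer-$\sigma$-field generated by a family of càdlàg processes $(Y^\alpha)_\alpha$. I would introduce the natural filtration
\[
    \FA_t := \bigcap_{\varepsilon>0}\sigma\left(Y^\alpha_s : \alpha,\ s\leq t+\varepsilon\right)\vee\mathcal{N},
\]
where $\mathcal{N}$ denotes the $\WM$-null sets, so that $\FA$ satisfies the usual conditions by construction. The inclusion $\Lambda\subset\mathcal{O}(\FA)$ is immediate, as each generator $Y^\alpha$ is càdlàg and $\FA$-adapted, hence $\FA$-optional. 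For the reverse inclusion $\mathcal{P}(\FA)\subset\Lambda$ I would argue by monotone class on rectangles $A\times(s,t]$ generating $\mathcal{P}(\FA)$: Meyer property (iii) applied to $Y^\alpha$ shows that $(\omega,u)\mapsto Y^\alpha_{u\wedge s}(\omega)$ lies in $\Lambda$, and multiplying by the $\Lambda$-measurable indicator $\mathbb{1}_{(s,t]}(u)$ yields $Y^\alpha_s(\omega)\mathbb{1}_{(s,t]}(u)\in\Lambda$. Extending to Borel functions of finitely many values $Y^{\alpha_i}_{r_i}$ with $r_i\leq s$ and monotone-class-closing then delivers $\mathbb{1}_A\mathbb{1}_{(s,t]}\in\Lambda$ for $A$ in the raw $\sigma$-field; the $\mathcal{N}$-augmentation is absorbed by the $\WM$-completeness of $\Lambda$.

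The main obstacle I anticipate is the last step of the $(\Rightarrow)$ direction: showing that the right-continuous regularisation in the definition of $\FA_s$ does not enlarge the class of sets $A$ for which $\mathbb{1}_A\mathbb{1}_{(s,t]}\in\Lambda$. The key input will be the right-continuity of the generators, guaranteeing $Y^\alpha_{s+}=Y^\alpha_s$, but one must still verify via a careful monotone class argument combined with shifting the interval $(s,t]$ to $(s+\varepsilon,t]$ and letting $\varepsilon\downarrow 0$ that every $\mathcal{G}_{s+}$-measurable event is already captured, modulo null sets, by $\sigma(Y^\alpha_r : r\leq s)$. This is precisely the point where the càdlàg generation hypothesis in the theorem statement becomes indispensable, and it is what allows the bracket $\mathcal{P}(\FA)\subset\Lambda\subset\mathcal{O}(\FA)$ to close up cleanly.
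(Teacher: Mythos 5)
This theorem is only quoted in the paper from \cite{EL80} (with \(\qed\) and no argument), so there is no in-paper proof to compare with; judging your proposal on its own terms, the \((\Leftarrow)\) direction is essentially fine: (i), (ii) are immediate, your decomposition \(Z_{t\wedge s}=Z_t\mathbb{1}_{[0,s]}(t)+Z_s\mathbb{1}_{(s,\infty)}(t)\) does give (iii), and the completeness step works once you say \emph{measurable} evanescent sets (an arbitrary subset of \(N\times[0,\infty)\) with \(\WM(N)=0\) need not even be \(\mathbb{F}\otimes\mathcal{B}([0,\infty))\)-measurable; for the measurable ones a trace argument using \(N\in\FA_0\) shows they are predictable, which is all your symmetric-difference step needs).

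The \((\Rightarrow)\) direction, however, breaks exactly at the point you flag, and the repair you announce rests on a false claim. Right-continuity of the generators does \emph{not} give a Blumenthal-type zero–one law: for a single generator \(Y_t=\xi t\) with \(\WM(\xi=\pm1)=\tfrac12\) one has \(\{\xi=1\}\in\mathcal{G}_{0+}\) while \(\mathcal{G}_0=\sigma(Y_0)\) is trivial, so \(\mathcal{G}_{s+}\)-events are in general \emph{not} captured modulo null sets by \(\sigma(Y^\alpha_r: r\le s)\). For the rectangles \(A\times(s,t]\) this would not matter — for \(A\in\mathcal{G}_{s+}\) write \(A\times(s,t]=\bigcup_n A\times(s+\tfrac1n,t]\), and each term is covered by your stopping/multiplication argument, with the \(\mathcal{N}\)-part absorbed by completeness — but \(\mathcal{P}(\FA)\) for your \(\FA\) also contains the time-zero generators \(A\times\{0\}\) with \(A\in\FA_0=\mathcal{G}_{0+}\vee\mathcal{N}\), and there the union trick is unavailable. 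This is not a removable technicality: in the example above let \(\Lambda\) be the \(\WM\)-completion of the Meyer-\(\sigma\)-field generated by \(Y\) and the deterministic c\`adl\`ag processes. Then \(\Lambda\) is contained in the \(\sigma\)-field generated by \(Y\), the deterministic processes and the product-measurable evanescent sets, and every set of the latter has time-zero section of probability \(0\) or \(1\); hence \(\{\xi=1\}\times\{0\}\notin\Lambda\), whereas \(\{\xi=1\}\times\{0\}\in\mathcal{P}(\FA)\) for your filtration because \(\{\xi=1\}\in\mathcal{G}_{0+}\subset\FA_0\). So with the right-continuous augmentation of the natural filtration of an arbitrary c\`adl\`ag generating family the inclusion \(\mathcal{P}(\FA)\subset\Lambda\) can genuinely fail, and the missing step cannot be closed as you describe; the filtration has to be extracted from \(\Lambda\) itself (compare the \(\sigma\)-fields \(\mathcal{F}^\Lambda_t=\sigma(Z_t\,|\,Z\ \Lambda\text{-measurable})\) of Definition~\ref{Main:137}), with the completeness hypothesis doing real work at time \(0\) — which is precisely the substance of Lenglart's proof that your construction bypasses.
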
	

\begin{Rem}[Meyer-$\sigma$-fields vs. Filtrations]
The main advantages of a Meyer-$\sigma$-field $\Lambda$ compared to a filtration are technical but powerful tools
like the Meyer Section Theorem below, which for example gives us uniqueness up to indistinguishability of
two $\Lambda$-measurable processes once they coincide at every $\Lambda$-stopping time (cf. Definition~\ref{Main:137} below for this notion). As one can see
for example in \citing{DM78}{Remark 91 (b)}{144}, adapted processes or even progressively measurable processes cannot in general be pinned down up to indistinguishability in this way.
\end{Rem}

Now we can make precise and corroborate our 
above optimality conjecture by showing that $\hat{C}^\eta$ from~\eqref{Main:44} satisfies 
\begin{align}\label{Main:75}
\hat{C}^\eta\in \underset{|C|\leq 1, 
								\text{ $C$ $\Lambda^\eta$-measurable}}{\mathrm{argmax}}
								\EW\left[\int_{[0,1]} C_s \md P_s\right].
\end{align}

To verify this we will need a generalization of classical stopping times to Meyer- or $\Lambda$-stopping times:
\begin{Def}[Following \citing{EL80}{Definition 1}{502}]\label{Main:137}
A mapping $S$ from $\Omega$ to $[0,\infty]$
is a \emph{$\Lambda$-stopping time},
if 
\[
	[[S,\infty[[\,:=\left\{(\omega,t)\in \Omega\times 
	[0,\infty)\, \middle | \, S(\omega)\leq t\right\}\in \Lambda.
\]			
The set of all $\Lambda$-stopping times is denoted
by $\stm$.
Additionally, we define for each random mapping 
$S:\Omega\rightarrow [0,\infty]$ the
$\sigma$-field 
\[
\mathcal{F}^\Lambda_S := \sigma(Z_S \,|\, Z \text{ is a $\Lambda$-measurable process}).
\]	
\end{Def}
Next, we introduce some classification of random times with respect to a Meyer-$\sigma$-field:
\begin{DefThm}[compare \citing{EL80}{Definition and Theorem 6}{510}]\label{Main:138}
	A random time 
	$T:\Omega\rightarrow [0,\infty]$ is called \emph{$\Lambda$-accessible} for a Meyer-$\sigma$-field $\Lambda$ if there exists
	a sequence of $\Lambda$-stopping times $(T_n)_{n\in \mathbb{N}}$ such that 
	\[
		\mathbb{P}\left(\bigcup_{n\in \NZ} \left\{T_n=T<\infty\right\}\right)=\mathbb{P}(T<\infty).
	\]
	The random variable $T$ is called \emph{totally $\Lambda$-inaccessible} if $\mathbb{P}(S=T<\infty)=0$ for all $\Lambda$-stopping times $S$.
	
	Moreover, for $\Lambda\subset \mathcal{O}(\FA)$, where $\FA$ denotes some filtration satisfying the usual conditions, there exists for each $\FA$-stopping time a partition of $\{T<\infty\}$, unique up to $\WM$-null-sets, into two sets $A,I \in \FA_T$, such that $T_A$ is $\Lambda$-accessible and $T_I$ is $\Lambda$-totally inaccessible. Here, e.g., $T_A$ denotes the restriction of $T$ to $A$ given by $T_A=T$ on $A$ and $T_A=\infty$ on $A^c$. \hfill$\qed$
\end{DefThm}
Having introduced totally $\Lambda$-inaccessible stopping times we can now state the following result, which will be crucial for proving that $\hat{C}^\eta$ from~\eqref{Main:44} is optimal.
\begin{Pro}[compare \BBA]\label{Main:165}
    Let $\Lambda$ be $\mathbb{P}$-complete and denote by $\FA:=(\FA_t)_{t\geq 0}$ a filtration satisfying the usual conditions such that $\mathcal{P}(\FA)\subset \Lambda\subset \mathcal{O}(\FA)$. Then we have $\WM$-almost surely for any $\Lambda$-measurable bounded process $C$
     at any totally $\Lambda$-inaccessible $\mathcal{F}$-stopping time $T$, 
    \[
    C_T=(^\mathcal{P} C)_T.
    \] \hfill$\qed$
\end{Pro}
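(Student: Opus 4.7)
The plan is a standard three-step argument. \textbf{Step 1: monotone class reduction.} Let $\mathcal{H}$ be the family of bounded $\Lambda$-measurable processes $C$ satisfying $C_T = ({}^\mathcal{P}C)_T$ $\WM$-a.s.\ on $\{T<\infty\}$. This $\mathcal{H}$ is a vector space containing the constants and closed under uniformly bounded monotone limits: if $C^n\uparrow C$ boundedly, then $C^n_T\uparrow C_T$ pointwise, and since ${}^\mathcal{P}$ commutes with bounded monotone limits up to indistinguishability (via bounded convergence at every predictable stopping time combined with the predictable section theorem), also $({}^\mathcal{P}C^n)_T\uparrow ({}^\mathcal{P}C)_T$ $\WM$-a.s. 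Because bounded c\`adl\`ag $\Lambda$-measurable processes form a multiplicative class that, by Definition~\ref{Main:45}(i), generates $\Lambda$, the functional monotone class theorem reduces the proof to $C$ bounded, c\`adl\`ag, and $\Lambda$-measurable.

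\textbf{Step 2: kill the jump of $C$ at $T$.} For such $C$, I would invoke the structural lemma---essentially \BBA and a consequence of the Meyer section theorem for Meyer-$\sigma$-fields---that the jump set $\{\Delta C\neq 0\}$ is $\Lambda$-accessible, i.e.\ contained in a countable union $\bigcup_{n\in\NZ}\stsetG{T_n}$ of graphs of $\Lambda$-stopping times $T_n$. Since $T$ is totally $\Lambda$-inaccessible, $\WM(T=T_n<\infty)=0$ for every $n$, so $\Delta C_T = 0$, giving $C_T = C_{T-}$ $\WM$-a.s.\ on $\{T<\infty\}$.

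\textbf{Step 3: identify $({}^\mathcal{P}C)_T$ with $C_{T-}$.} Because $\mathcal{P}(\FA)\subset\Lambda$, every predictable $\FA$-stopping time is a $\Lambda$-stopping time, so our totally $\Lambda$-inaccessible $T$ is in particular totally $\FA$-inaccessible in the classical sense. For bounded c\`adl\`ag $\FA$-optional $C$ the standard decomposition ${}^\mathcal{P}C = C_- + (\Delta C)^p$ holds, where the compensator $(\Delta C)^p$ is concentrated on the $\FA$-accessible part of $\{\Delta C\neq 0\}$, hence on a countable union of graphs of $\FA$-predictable stopping times. Since $T$ avoids this thin set $\WM$-a.s., $(\Delta C)^p_T = 0$, and so $({}^\mathcal{P}C)_T = C_{T-} = C_T$ $\WM$-a.s., as desired. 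The genuine technical obstacle is the $\Lambda$-accessibility of jumps used in Step 2, which must be carefully imported from the Meyer-$\sigma$-field machinery of the companion paper via \BBA; granted this ingredient, the remaining monotone-class and predictable-projection manipulations are routine.
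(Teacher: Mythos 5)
Your statement is one the paper itself does not prove: it is stated with a closing tombstone and delegated to the companion paper via \BBA, so there is no in-paper argument to compare against line by line; what follows assesses your reconstruction on its own terms. The architecture is sound and very plausibly the intended one: reduce by the functional monotone class theorem to bounded c\`adl\`ag $\Lambda$-measurable $C$ (your closure argument for $\mathcal{H}$, using conditional bounded convergence at predictable times plus the predictable section theorem, is fine), show that the jumps of such a $C$ live on a countable union of graphs of $\Lambda$-stopping times which the totally $\Lambda$-inaccessible $T$ avoids, and finally use that $T$ is also classically totally inaccessible (every $\FA$-predictable time is a $\Lambda$-stopping time since $\mathcal{P}(\FA)\subset\Lambda$) to identify $({}^{\mathcal{P}}C)_T$ with $C_{T-}$.

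The one genuine weak point is Step 2 as written: you justify the jump-exhaustion lemma by ``essentially \BBA'', but \BBA is precisely the source of the proposition you are proving, so as a citation this is circular. The lemma is true and can be proved from the Meyer section theorem (Theorem~\ref{Main:84}) by the standard thin-set exhaustion, provided you first make the sections finite: with $U_t:=\sum_{s\leq t}\mathbb{1}_{\{|\Delta C_s|\geq 1/m\}}$, the sets $A_{m,K}:=\{|\Delta C|\geq 1/m\}\cap\{U_-\leq K-1\}$ belong to $\Lambda$ (because $C$, $C_-$ and the predictable process $U_-$ are $\Lambda$-measurable) and have at most $K$ points per path; iteratively selecting $\Lambda$-stopping times by the section theorem and removing their graphs (which lie in $\Lambda$ since $\mathcal{P}(\FA)\subset\Lambda$) must cover $A_{m,K}$ up to evanescence, since otherwise the disjoint graphs chosen would give $\sum_n\WM(S_n<\infty)=\infty$ while this sum is bounded by $K$. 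Two smaller repairs in Step 3: what you call the compensator $(\Delta C)^p$ is the predictable projection ${}^{\mathcal{P}}(\Delta C)$, so that ${}^{\mathcal{P}}C=C_-+{}^{\mathcal{P}}(\Delta C)$; and the claim that this projection is carried by countably many predictable graphs deserves its short proof: split the exhausting $\FA$-stopping times into accessible and totally inaccessible parts, note that the totally inaccessible contributions have vanishing predictable projection (evaluate at predictable times and apply the predictable section theorem), and that the accessible parts are covered by predictable graphs, all of which $T$ avoids almost surely. With these points supplied, your argument closes the proof correctly.
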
			
	
To prove our optimality claim~\eqref{Main:75}, 				
take an arbitrary $\Lambda^\eta$-measurable control $C$ and put $C_\infty:= 0$ for notational convenience. Define furthermore
\[
	T_k:=\inf\{t\in [0,1]\, | \,N_t=k\}
\]
(with $\inf \emptyset := \infty$) and observe that this $\FA$-stopping time has the decomposition 
$T_k=(T_k)_{A_k^\eta}\wedge (T_k)_{(A_k^\eta)^c}$ with $A_k^\eta:=\{|Y_k|\geq \eta\}$ 
into a $\Lambda^\eta$-accessible $\FA$-stopping time $(T_k)_{A_k^\eta}$ and a totally $\Lambda^\eta$-inaccessible $\FA$-stopping time $(T_k)_{(A_k^\eta)^c}$
(see Definition~\ref{Main:138}).
By Proposition~\ref{Main:165} it then follows that $C_{T_k}={^\mathcal{P} C_{T_k}}$
 on $(A_k^\eta)^c$ almost surely. Therefore,
 \begin{align}
 	\EW\left[C_{T_k} Y_k\right]
 	=\EW\left[C_{(T_k)_{\{|Y_k|\geq \eta\}}} Y_k\right]+
 	\EW\left[{^\mathcal{P} C}_{(T_k)_{\{|Y_k|< \eta\}}} Y_k\right].
 \end{align}
 We thus obtain for all $\Lambda^\eta$-measurable $C$ with $0\leq |C_s| \leq 1$,
\begin{align}
	\EW\left[\int_{[0,1]} C_s\md \tilde{P}_s\right]
	&=\sum_{k=1}^\infty\EW\left[C_{T_k} Y_k\right]
	=\sum_{k=1}^\infty \left(\EW\left[C_{(T_k)_{\{|Y_k|\geq \eta\}}} Y_k\right]+
 	\EW\left[{^\mathcal{P} C}_{(T_k)_{\{|Y_k|< \eta\}}} Y_k\right]\right)\\
	&= \EW\left[\sum_{0 \leq s\leq 1} C_s\Delta
	\tilde{P}_s\mathbb{1}_{\{|\Delta \tilde{P}_s|\geq \eta\}}\right]+
	\EW\left[\sum_{0 \leq s\leq 1} {^\mathcal{P} C}_s\Delta
	\tilde{P}_s\mathbb{1}_{\{|\Delta \tilde{P}_s|< \eta\}}\right]	\\
	&= \EW\left[\sum_{0 \leq s\leq 1} C_s\Delta
	\tilde{P}_s\mathbb{1}_{\{|\Delta \tilde{P}_s|\geq \eta\}}\right]	+ 0\label{Main:783}\\
	&\leq \EW\left[\sum_{0 \leq s\leq 1}|\Delta \tilde{P}_s|\mathbb{1}_{\{|\Delta \tilde{P}_s|\geq \eta\}}\right]\label{Main:784}
	=\EW[N_1]\EW[|Y_1|\mathbb{1}_{\{|Y_1|\geq \eta\}}]=\frac{\lambda}{2}(1-\eta^2),
\end{align}
 where we have used in~\eqref{Main:783} that the process 
$\bar{P}_t:=p+ \sum_{k=1}^{N_t} Y_k\mathbb{1}_{\{|Y_k|< \eta\}}$ defines an $\FA$-martingale
and hence
\[
	\EW\left[\sum_{0 \leq s\leq 1} {^\mathcal{P} C}_s\Delta
	\tilde{P}_s\mathbb{1}_{\{|\Delta \tilde{P}_s|< \eta\}}\right]=\EW\left[\int_{[0,1]}{^\mathcal{P}C}_s\md \bar{P}_s\right]=0.
\]
To finally conclude~\eqref{Main:75}, we now just have to observe that we have equality in~\eqref{Main:784} for $C=\hat{C}^\eta$ from~\eqref{Main:44}.			

Before using Meyer-$\sigma$-fields in a more relevant setting than the above toy problem, let us conclude this section with another two important concepts from the general theory of stochastic processes that are Meyer-measurable: the Meyer Section Theorem and the Meyer Projection Theorem. 

\begin{Thm}[Meyer Section Theorem, 
			\citing{EL80}{Section Theorem 1}{506}]\label{Main:84}
			Let $B$ be an element of a $\WM$-complete Meyer-$\sigma$-field $\Lambda$. For every
			$\epsilon>0$, there exists $S\in \stm$
			such that $B$ contains the graph of $S$, i.e.~
			\[ 
				B\supset \stsetG{S}:= \mathrm{graph}(S):=\{(\omega,S(\omega))
				\in \Omega\times [0,\infty)	\, | \, S(\omega)<\infty\}
			\]								
			and
			\[
				\mathbb{P}(S<\infty)>\mathbb{P}(\pi(B))-\epsilon,
			\]
			where $\pi(B):=\left\{\omega\in \Omega \,
			\middle |\, (\omega,t)\in B
			\text{ for some $t\in [0,\infty)$}\right\}$ 
			is the projection of $B$ onto $\Omega$.\qed
\end{Thm}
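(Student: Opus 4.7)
The plan is to adapt the classical optional section theorem by substituting hitting times of c\`adl\`ag $\Lambda$-measurable processes for hitting times of c\`adl\`ag adapted processes. I would first invoke the preceding characterization theorem to embed $\Lambda$ between $\mathcal{P}(\FA)$ and $\mathcal{O}(\FA)$ for some filtration $\FA$ satisfying the usual conditions, so that the classical tools of the \emph{th\'eorie g\'en\'erale des processus} are at our disposal. The heart of the argument is then a Choquet capacity extension starting from a generating paving $\mathcal{G}$ consisting of sets of the form $\{(\omega,t) : Z_t(\omega) > 0\}$ for non-negative c\`adl\`ag $\Lambda$-measurable processes $Z$.

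For such a basic set $B = \{Z > 0\}$ and any $a > 0$, I would consider the hitting time $S_a := \inf\{t \geq 0 : Z_t > a\}$. A standard argument using the c\`adl\`ag regularity of $Z$ shows $S_a \in \stm$: the event $\llbracket S_a, \infty \llbracket$ can be expressed via countably many conditions of the form $\{Z_q > a\}$ at rational times $q$, each of which lies in $\Lambda$ by stability of $\Lambda$ under deterministic stopping (Definition~\ref{Main:45}(iii)) together with the observation that $Z_{t \wedge q}$ is $\Lambda$-measurable. By right-continuity of $Z$ one has $Z_{S_a} \geq a > 0$ on $\{S_a < \infty\}$, hence $\stsetG{S_a} \subset B$, and letting $a \downarrow 0$ yields $\WM(S_a < \infty) \uparrow \WM(\pi(B))$ by monotone convergence. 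The paving $\mathcal{G}$ is stable under finite unions (via $Z \vee Z'$) and finite intersections (via $Z \wedge Z'$), and by Definition~\ref{Main:45}(i) together with $\WM$-completeness it generates $\Lambda$ up to evanescent sets.

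For a general $B \in \Lambda$, I would define the set function
\[
J(A) := \sup\{\WM(\pi(\stsetG{S})) : S \in \stm,\ \stsetG{S} \subset A\}
\]
and check that $J$ is a Choquet capacity relative to the paving $\mathcal{G}$. Monotonicity is immediate; continuity along increasing sequences follows from a patching construction in which near-optimal $\Lambda$-stopping times are restricted to disjoint $\Lambda$-measurable sets on which they are smallest and then combined; continuity along decreasing sequences in $\mathcal{G}$ follows from pointwise convergence of the generators together with their c\`adl\`ag regularity. Choquet's capacitability theorem then yields $J(B) = \WM(\pi(B))$ for every $B \in \Lambda$, and the definition of $J$ produces the required $S$ up to the prescribed error $\epsilon$. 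The main technical obstacle is this verification of the capacity axioms — particularly the continuity along decreasing sequences, where the c\`adl\`ag structure of the generators of $\Lambda$ must substitute for the classical right-continuity of the filtration, since $\Lambda$ may be strictly smaller than $\mathcal{O}(\FA)$ so that only $\Lambda$-measurable (rather than all optional) c\`adl\`ag processes are available for building admissible hitting times.
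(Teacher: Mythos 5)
The paper does not prove this theorem at all (it is imported from \cite{EL80} with a \textsc{qed} right after the statement), so your sketch has to be judged against the classical capacitability argument it gestures at, and two of its key steps fail as stated. First, the claim that $S_a:=\inf\{t\geq 0: Z_t>a\}$ lies in $\stm$ is false in general, and the rational-time argument cannot deliver it: to write $\stsetRO{S_a}{\infty}$ through the events $\{Z_q>a\}$, $q\in\mathbb{Q}$, you need sets of the form $\{Z_q>a\}\times(q-\varepsilon,\infty)$ to belong to $\Lambda$, i.e.\ time-$q$ information to be measurable strictly before $q$, which is exactly what a Meyer-$\sigma$-field between $\mathcal{P}(\FA)$ and $\mathcal{O}(\FA)$ forbids; what the argument proves is only that $S_a$ is an $\FA$-stopping time. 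Concretely, take $\Lambda=\mathcal{P}(\FA)$ (a $\WM$-complete Meyer-$\sigma$-field), let $T$ be the first jump time of a Poisson process and $Z_t:=a+(t-T)^+$ with $a>0$: this $Z$ is continuous, adapted and nonnegative, hence c\`adl\`ag and $\Lambda$-measurable, yet $S_a=T$ is totally inaccessible and therefore not a $\Lambda$-stopping time. The correct building block is the d\'ebut $D_a$ of the closed level set $\{Z\geq a\}$: by right-continuity $\stsetG{D_a}\subset\{Z\geq a\}\subset\{Z>0\}$, and a d\'ebut of a $\Lambda$-measurable set whose graph is contained in that set is a $\Lambda$-stopping time --- this is precisely the pre-section-theorem result \citing{EL80}{Corollary 2}{504} that the present paper invokes in the proofs of Lemma~\ref{Main:119} and Proposition~\ref{app:5} --- while $\WM(D_a<\infty)\uparrow\WM(\pi(\{Z>0\}))$ as $a\downarrow 0$ still holds.

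Second, the Choquet step is set up with the wrong paving and draws the wrong conclusion. With $\mathcal{G}=\{\{Z>0\}: Z\geq 0 \text{ c\`adl\`ag, } \Lambda\text{-measurable}\}$ the descending-continuity axiom fails outright, both for your $J$ and for $\WM\circ\pi$: the deterministic continuous processes $Z_n(t)=\bigl(t\wedge(\tfrac1n-t)\bigr)^+$ are $\Lambda$-measurable, $G_n=\Omega\times(0,\tfrac1n)\in\mathcal{G}$ decreases to $\emptyset$, yet $J(G_n)=\WM(\pi(G_n))=1$ for every $n$ (take the constant time $\tfrac1{2n}$). This is exactly why the classical proofs use a quasi-compact paving of finite unions of right-closed stochastic intervals (equivalently, sets $\{Z\geq a\}$ suitably truncated in time), whose d\'ebuts are attained, so that decreasing sequences behave. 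Moreover, even if $J$ were a $\mathcal{G}$-capacity, Choquet's theorem would only give $J(B)=\sup\{J(C): C\in\mathcal{G}_\delta,\ C\subset B\}$, an inner approximation of $J$ by itself; it does not yield $J(B)=\WM(\pi(B))$. The standard scheme needs the outer capacity $I(A)=\WM^{*}(\pi(A))$, the $\mathcal{G}$-analyticity of every $B\in\Lambda$ (which, via measurable projection, also makes $\WM(\pi(B))$ well defined), and the separate elementary fact that $\mathcal{G}_\delta$-elements are sectioned exactly by their attained d\'ebuts; only combining these gives $\sup_{S}\WM(S<\infty)=\WM(\pi(B))$ over $S\in\stm$ with $\stsetG{S}\subset B$. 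As written, your final inference is a non sequitur, so the proposal has genuine gaps at both the basic-set and the capacity stage.
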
			

An important consequence is the following corollary:

\begin{Cor}[\citing{EL80}{Corollary}{507}]\label{Main:68}
	If $Z$ and $Z'$ are two $\Lambda$-measurable processes,
	such that for each bounded $T\in \stm$ we have
	$Z_T\leq Z_T'$ a.s.\ (resp.\ $Z_T=Z_T'$ a.s.), then the 
	set $\{Z>Z'\}$ is evanescent (resp.\  $Z$ and $Z'$ are
	indistinguishable).\qed
\end{Cor}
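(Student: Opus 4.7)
The plan is to apply the Meyer Section Theorem~\ref{Main:84} to the set $B := \{Z > Z'\}$, which lies in $\Lambda$ because $Z-Z'$ is a $\Lambda$-measurable process. The goal is to show $\mathbb{P}(\pi(B)) = 0$, which is exactly evanescence of $B$; the equality statement then follows at once by applying the inequality version to both $(Z,Z')$ and $(Z',Z)$.

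To reduce to \emph{bounded} $\Lambda$-stopping times, I would first truncate the time horizon. Fix $n \in \NZ$ and set $B_n := B \cap (\Omega \times [0,n])$; this set lies in $\Lambda$, since $\Omega \times [0,n]$ is $\Lambda$-measurable by property (ii) of a Meyer-$\sigma$-field. For any $\epsilon > 0$, Theorem~\ref{Main:84} then yields $S \in \stm$ with $\stsetG{S} \subset B_n$ and $\mathbb{P}(S < \infty) > \mathbb{P}(\pi(B_n)) - \epsilon$. Because $\stsetG{S} \subset \Omega \times [0,n]$, we have $S \leq n$ on $\{S < \infty\}$, and so $\tilde S := S \wedge (n+1)$ is a bounded $\Lambda$-stopping time. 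The hypothesis yields $Z_{\tilde S} \leq Z'_{\tilde S}$ a.s., whereas on $\{S < \infty\}$ we have $\tilde S = S$ with $(\omega,\tilde S(\omega)) \in B$, forcing $Z_{\tilde S} > Z'_{\tilde S}$ there. Hence $\mathbb{P}(S < \infty) = 0$ and thus $\mathbb{P}(\pi(B_n)) \leq \epsilon$. Sending $\epsilon \downarrow 0$ and then $n \uparrow \infty$ gives $\mathbb{P}(\pi(B)) = 0$, as desired.

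The only mildly delicate point I expect is the bookkeeping around boundedness: the Meyer Section Theorem produces a $\Lambda$-stopping time that is finite on nearly all of $\pi(B)$ but carries no a priori upper bound, whereas the hypothesis only tests \emph{bounded} $\Lambda$-stopping times. The deterministic-time truncation $B_n = B \cap (\Omega \times [0,n])$, admissible thanks to property (ii) of a Meyer-$\sigma$-field, bridges this gap cleanly and is really the only place where the specific axioms of a Meyer-$\sigma$-field (beyond measurability) enter the argument.
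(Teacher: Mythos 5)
Your argument is correct and is exactly the standard deduction from the Meyer Section Theorem~\ref{Main:84} that the paper intends: the corollary is stated as an immediate consequence of that theorem (with a citation to \cite{EL80} and no written proof), and your section-theorem argument, including the truncation $B_n=B\cap(\Omega\times[0,n])$ to reduce to bounded $\Lambda$-stopping times, is the canonical way to carry it out.
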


Finally, we state an equivalent definition of $\Lambda$-projections. These projections were introduced in \citing{EL80}{Definition}{512}, and they provide us with a generalization of the well known
optional and predictable projections:

\begin{DefThm}[compare \BBB]\label{Main:154}
			For any nonnegative $\mathbb{F}\otimes
			\mathcal{B}([0,\infty))$-measurable process $Z$,  
			there exists a nonnegative
			$\Lambda$-measurable process $^\Lambda Z$, unique up 
			to indistinguishability, such that 
			\[
				\EW\left[\int_{[0,\infty)} Z_s\md A_s\right]
				=\EW\left[\int_{[0,\infty)} {^\Lambda Z}_s\md A_s\right]
			\]
			for any c\`adl\`ag, $\Lambda$-measurable, increasing process
			$A$.
			This process ${^\Lambda Z}$ is called the \emph{$\Lambda$-projection of $Z$,} and it is also uniquely determined by 
			\[
			    {^\Lambda Z_S}=\EW\left[Z_S\midG\aFA_S\right],\quad S\in \stm.
			\]\qed
\end{DefThm}
	
\section{Irreversible investment with inventory risk}\label{sec:1.2}

Naturally, much more intricate information dynamics involving
jumps can be considered than the simple sensor used in the previous section (see~\eqref{Main:27}). It is thus of interest to
develop a general approach to optimal control with
Meyer-$\sigma$-fields.
 It is the goal of
the present paper to do so for the problem of
irreversible investment, a non-linear stochastic 
singular control problem which has been of considerable
interest in the literature; see  e.g.\ \cite{arr66}, \cite{DP_94}, \cite{Ber98}, \cite{MZ_07}, \cite{RX11}, \cite{FP_14}, \cite{Fer15}, \cite{AZ_17}, \cite{DFF_17}.
	
Let us consider a controller who can choose her actions based on the information flow conveyed by a Meyer-$\sigma$-field $\Lambda$ satisfying
  $\mathcal{P}(\mathcal{F})\subset 
 	\Lambda\subset \mathcal{O}(\mathcal{F})$,
where $\mathcal{F}$ is a complete, right-continuous filtration generated, e.g., by the monitoring of exogenous random shocks hitting the controlled system.
So, in our irreversible investment problem, \emph{controls} are $\Lambda$-measurable,increasing and (for now) c\`adl\`ag processes $C$ starting from a given value 
\begin{align}
    C_{0-}:=\uP\in \RZ.
\end{align}
A control $C$ will incur a risk described by
\begin{align}
	\mathbb{E}\left[\int_{[0,\infty)}
					\rho_t(C_t) \md R_t\right],
\end{align}
where $\EW$ corresponds to a given fixed probability measure $\WM$ on the measurable space $(\Omega,
\FA_\infty:=\bigvee_{t\geq 0} \FA_t)$ and where $\rho$ and $R$ are as follows:
\begin{Ass}\label{Main:66}
	\begin{enumerate}[label=(\roman*)]
		\item $\md R$ is a random Borel measure on $[0,\infty)$ with $\md R(\{\infty\}):=0$.
		\item The stochastic field $\rho:\Omega\times [0,\infty)
				\times \RZ \rightarrow \RZ,\ (\omega,t,c)
				\mapsto \rho_t(\omega,c)$ satisfies:
				\begin{enumerate}[label=(\alph*)]
					\item For $\omega\in \Omega$, 
						$t\in [0,\infty)$, the function $\rho_t(\omega,\cdot)$
						is strictly convex and continuously differentiable on $\RZ$
						with 
						\[
								\lim_{c\downarrow -\infty}\frac{\partial}{\partial c}
							\rho_t(\omega,c)=-\infty,\quad 
							\lim_{c\uparrow \infty}\frac{\partial}{\partial c}
							\rho_t(\omega,c)=\infty.
						\]
					\item For $c\in \RZ$, the process 
					$\rho_{\cdot}(\cdot,c):
					\Omega\times[0,\infty)\rightarrow \RZ;
					(\omega,t)\mapsto
					\rho_{t}(\omega,c)$ is $\FA\otimes\mathcal{B}([0,\infty))$-measurable.
					\item We have
						\begin{align}\label{eq:cintegrable}
							\EW\left[\int_{[0,\infty)}
									 |\rho_t(c)|\md R_t\right]<\infty, \quad  c\in \RZ,
						\end{align}
						and
						\begin{align}\label{eq:infintegrable}
							\EW\left[\int_{[0,\infty)}
									 \inf_{c \in \RZ} \rho_t(c) \md R_t\right]>-\infty.
						\end{align}
				 \end{enumerate}
	 \end{enumerate}
\end{Ass}	

\begin{Rem}
 The process $R_t :=\md R([0,t])$, $t \geq 0$, can be viewed as a risk clock. Its jumps correspond to atoms of $\md R$ and indicate times of particular importance for a control's risk assessment. The random field $\rho=\rho_t(c)$ can be viewed as a description of, e.g., inventory risk emerging from the inventory level $c$ installed at time $t$.
\end{Rem}

The set $\Cn$ of \emph{admissible controls} consist of all controls 
$\tilde{C}$ with $\tilde{C}_{0-}:=c_0$, which exhibit limited risk in the sense that
\begin{align}\label{Main:70}
	&\EW\left[\int_{[0,\infty)}
					\rho_t(\tilde{C}_t) \md R_t\right]<\infty
\end{align}
and which have reasonable expected rewards
\begin{align}
	\mathbb{E}\left[\int_{[0,\infty)}
					(P_t\wedge 0) \md \tilde{C}_t\right]>-\infty,
\end{align}
where $P$ denotes a given $\mathbb{F}\otimes\mathcal{B}([0,\infty))$-measurable reward process.

Admissible controls then yield the (possibly infinite) value 
\begin{align}
	\Vn(\tilde{C}):=\EW\left[\int_{[0,\infty)} P_t\md \tilde{C}_t
	-\int_{[0,\infty)} \rho_t(\tilde{C}_t)\md R_t\right]\in (-\infty,\infty]
\end{align}	
and we are led to consider the control problem:
\begin{align}\label{Main:222}
	\text{Find \quad $\hat{C} \in \Cn$ \quad attaining } \quad \sup_{\tilde{C}\in\Cn} \Vn(\tilde{C}).
\end{align}
In some applications the reward process $P$ may not be fully observable to the controller at the time of decision making, i.e.~it may not be $\Lambda$-measurable. As a consequence, also the running reward $\int_{[0,t]}P_s\md C_{s}$ may not be observable to the controller unless $(P_s)_{0\leq s\leq t}$ is. Hence, the controller may not know immediately about the revenues generated from an intervention, but, of course, the controller can (and should) form an expectation about these revenues based on her information flow $\Lambda$. Mathematically, this is captured by the passage to the $\Lambda$-projection $^\Lambda P$ of $P$ as introduced in Definition and Theorem~\ref{Main:154}. Thus, denoting by $^\Lambda P$ the $\Lambda$-projection of $P$, we can rewrite the value generated by a control $\Tilde{C}\in \Cn$ as
\[
    \Vn(\Tilde{C})=\EW\left[\int_{[0,\infty)} {^\Lambda P}_t\md \tilde{C}_t
	-\int_{[0,\infty)} \rho_t(\tilde{C}_t)\md R_t\right].
\]

In order to ensure existence of a solution to the optimization problem~\eqref{Main:222}, we impose the following mild regularity conditions on the reward process $P$ which ensure just the right form of semi-continuity:

\begin{Ass}[Assumptions on the reward process]\label{Main:6}
	The reward process $P$ with $P_\infty:=0$ admits a $\Lambda$-projection $^\Lambda P$  and it satisfies the following conditions:
			\begin{enumerate}[label=(\roman*)]
				\item ${^\Lambda P}$ is of class($D^\Lambda$), i.e., the family
				$\{{^\Lambda P}_T\,|\, T\in \stm\}$ is uniformly integrable, and we have ${^\Lambda P}_S=0$ for any $\Lambda$-stopping time $S$
					such that $\md R([S,\infty))=0$ 
					almost surely.
				\item We have left-upper-semicontinuity in expectation at any predictable stopping time $S$ in the sense that for any non-decreasing sequence $(S_n)_{n\in \NZ}\subset \stm$ with $S_n<S$ on $\{S>0\}$ and $\lim_{n\rightarrow \infty} S_n=S$ we have
								\begin{align*}
									\EW\left[P_S\right]\geq
									\limsup_{n\rightarrow \infty}
											\EW\left[P_{S_n}\right].
								\end{align*}	
				\item We have $\Lambda$-$\md R$-right-upper-semicontinuity
						in expectation at every 	$S\in \stm$  in the
						sense that for any sequence $(S_n)_{n\in \NZ}\subset
						\stm$ with $S_n\geq S$ for all $n\in \NZ$ such that
						 $\lim_{n\rightarrow \infty} \md R([S,S_n))=0$ almost surely
						we have
						\[
							\EW\left[P_S\right]
							\geq \limsup_{n\rightarrow \infty}
							\EW\left[P_{S_n}\right].  
						\]
			\end{enumerate}			
\end{Ass}				

\begin{Rem}
  Condition~(i) ensures that rewards do not explode and no more rewards can be expected after all risk has evaporated from the system. Conditions~(ii) and~(iii) are needed to rule out obvious counterexamples for the existence of optimal controls.
\end{Rem}

Assumption~\ref{Main:6} does not suffice to guarantee existence of an optimal control for~\eqref{Main:222} as also shown by the explicitly constructed optimal controls in our later examples. Indeed, we need to relax to increasing controls which are merely l\`adl\`ag and not necessarily right-continuous. For such controls
we introduce the following $\mdS$-integral inspired by  \cite{CS14}:
\begin{Def}\label{Main:139}
For an increasing process $A=(A_t)_{t\geq 0}$ with $A_{0-}>-\infty$, we define the right-continuous, increasing processes for $t\in [0,\infty)$ 
\begin{align}\label{Main:143}
	\rdc{A}_t&:=\sum_{s\leq  t} \Delta^+A_s,\quad  \rdc{A}_{0-}:=0,\nonumber\\
	\ld{A}_t&:=\sum_{s\leq  t} \Delta^-A_s,\quad \ld{A}_{0-}:=0,\nonumber\\
	\dc{A}_t&:=A_t-\ld{A}_{t}-\rdc{A}_{t-},\quad \dc{A}_{0-}:=A_{0-},\nonumber\\
	\ldc{A}_t&:=A_t-\rdc{A}_{t-}=\dc{A}_t+\ld{A}_t,\quad \ldc{A}_{0-}:=A_{0-},
\end{align}
where $\Delta^+ A_t:=A_{t+}-A_t$,  $\Delta^- A_t:=A_{t}-A_{t-}$ and  $\Delta A_t:=A_{t+}-A_{t-}$ for $t\geq 0$. Moreover, we set,
for $t\geq 0$,
\begin{align}\label{Main:4}
		\int_{[0,t]}\phi_v\mdS A_v
		:=\int_{[0,t]}\phi_v\md \ldc{A}_v
					+\int_{[0,t)}(\phi_v^\ast) \md \rdc{A}_v,\nonumber\\
		\int_{[t,\infty)}\phi_v\mdS A_v
		:=\int_{[t,\infty)}\phi_v\md \ldc{A}_v
					+\int_{[t,\infty)}(\phi_v^\ast) \md \rdc{A}_v,
\end{align}	
for any measurable process $\phi$ with 
\[
	\int_{[0,\infty)}(\phi_v\wedge 0)\md \ldc{A}_v
			+\int_{[0,\infty)}(\phi_v^\ast \wedge 0) \md \rdc{A}_v>-\infty.
\]
Here, integration with respect to $\rdc{A}$ and $\ldc{A}$ is to be understood in the usual Lebesgue-Stieltjes sense; $\phi^\ast$ is the right-upper-semicontinuous
envelope of $\phi$ defined by
\begin{align}\label{Main:5}
	\phi_{t}^\ast(\omega):=\limsup_{s\downarrow t} \phi_s(\omega)
			:=\lim_{n\rightarrow \infty}\sup_{s\in (t,t+\frac1n)}
			\phi_s(\omega),\quad t\in [0,\infty).
\end{align}
Analogously, we define $\mdST$-integration by replacing in~\eqref{Main:4} the right-upper-semicontinuous envelope $\phi^\ast$ with the right-lower-semicontinuous envelope $\phi_\ast$
given by
\begin{align}\label{Main:164}
	\phi_{t\ast}:=\liminf_{s\downarrow t} \phi_s
			:=\lim_{n\rightarrow \infty}\inf_{s\in (t,t+\frac1n)}
			\phi_s, \quad t\in [0,\infty).
\end{align}
\end{Def}
Some results concerning $\mdS$-integration  
and a comparison to similar integrals in the literature are collected in Appendix~\ref{app:integration}.

Denote by  $\Cm$ the set of non-decreasing (and thus l\`adl\`ag), $\Lambda$-measurable controls $C$ starting in $C_{0-} =c_0$ which incur limited risk in the sense that they satisfy~\eqref{Main:70}
and which generate reasonable expected rewards in the sense that
\begin{align}\label{Main:9}
	\mathbb{E}\left[\int_{[0,\infty)}
					({^\Lambda P}_t\wedge 0) \mdS C_t\right]>-\infty,
\end{align}
Now we consider the following relaxed optimization problem:
\begin{align}\label{Main:2}
	\text{Find \quad $\hat{C} \in \Cm$\quad attaining }\quad\sup_{C\in\Cm} \Vm(C),
\end{align}
where
\begin{align}
        \Vm(C):=
	\EW\left[\int_{[0,\infty)} {^\Lambda P}_t \mdS C_t
	-\int_{[0,\infty)} \rho_t(C_t)\md R_t\right].
\end{align}	
Theorem~\ref{Main:14} below constructs a solution to this relaxed concave optimization problem, ensuring in particular existence in this relaxation. 
The $\mdS$-integral will turn out to be natural because we seek to maximize $V(C)$; $\mdST$-integration would be the choice in a minimization problem.
The explicit solutions computed in Section~\ref{sec:Poisson} reveal that l{\`a}dl{\`a}g controls cannot be avoided, thus underlining the need to relax the original problem~\eqref{Main:222}.
The following proposition verifies that the relaxed optimization problem~\eqref{Main:2} admits the same value as our original problem~\eqref{Main:222}:
\begin{Pro}\label{Main:307}
    Assume $P$ is an $\mathbb{F}\otimes\mathcal{B}([0,\infty))$-measurable process such that $^\Lambda P$ exists and suppose Assumption~\ref{Main:66} is satisfied.
    Then for any l\`adl\`ag control $C\in \Cm$ there are c\`adl\`ag controls $\tilde{C}^n\in \Cn$, $n\in \NZ$, such that 
    \begin{align}\label{Main:305}
        V(C)=\lim_{n\rightarrow \infty} \Vn(\tilde{C}^n).		    
    \end{align}
    In particular, we have
    \begin{align}\label{Main:309}
        \sup_{C\in \Cm}\Vm(C)=\sup_{\tilde{C}\in \Cn}\Vn(\tilde{C}).
    \end{align}
 \end{Pro}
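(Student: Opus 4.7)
My plan is to approximate each l\`adl\`ag $C \in \Cm$ by c\`adl\`ag controls $\tilde{C}^n \in \Cn$ obtained by shifting every right-jump of $C$ slightly to the right, to a carefully chosen nearby time where ${^\Lambda P}$ approximates its right-upper-semicontinuous envelope $({^\Lambda P})^\ast$. Concretely, I decompose $C = \ldc{C} + \rdc{C}_-$ as in Definition~\ref{Main:139}; the right-jumps of $C$ form an at most countable family indexed by $\Lambda$-accessible $\FA$-stopping times $(S_k)_{k\in \NZ}$ with $\aFA_{S_k}$-measurable sizes $\Delta^+ C_{S_k}$. For each $n$, I invoke the Meyer section theorem (Theorem~\ref{Main:84}) together with a measurable selection argument to produce $\Lambda$-stopping times $T^n_k$ satisfying $S_k < T^n_k \leq S_k + 1/n$ on $\{S_k < \infty\}$ and ${^\Lambda P}_{T^n_k} \geq ({^\Lambda P})^\ast_{S_k} - 1/n$ on $\{({^\Lambda P})^\ast_{S_k} > -\infty\}$ (while setting $T^n_k := S_k + 1/n$ elsewhere), and then define
\[
  \tilde{C}^n_t := \ldc{C}_t + \sum_{k\in \NZ} \Delta^+ C_{S_k}\, \mathbb{1}_{\{T^n_k \leq t\}}.
\]
This $\tilde{C}^n$ is c\`adl\`ag, increasing, starts in $c_0$, and is $\Lambda$-measurable because its jumps are placed at the $\Lambda$-stopping times $T^n_k$ with $\aFA_{T^n_k-}$-measurable weights.

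\paragraph{Admissibility and convergence.} Since $c_0 \leq \tilde{C}^n \leq C$ pointwise, convexity of $\rho_t$ yields $\rho_t(\tilde{C}^n_t) \leq \rho_t(c_0) \vee \rho_t(C_t)$, which is $\md R \otimes \WM$-integrable by Assumption~\ref{Main:66} and the admissibility of $C$; together with the reward integrability inherited from~\eqref{Main:9} this gives $\tilde{C}^n \in \Cn$. For the convergence $\Vn(\tilde{C}^n) \to V(C)$ I split the difference into cost and reward parts. The cost difference $\EW[\int(\rho_t(C_t)-\rho_t(\tilde{C}^n_t))\md R_t]$ is supported on the shrinking intervals $(S_k,T^n_k]$, whose $\md R$-measure vanishes by continuity from above of the random measure, so dominated convergence gives the limit $0$. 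The reward difference has the form
\[
  \EW\left[\int {^\Lambda P}_t \md \tilde{C}^n_t\right] - \EW\left[\int {^\Lambda P}_t \mdS C_t\right] = \EW\left[\sum_{k} \bigl({^\Lambda P}_{T^n_k} - ({^\Lambda P})^\ast_{S_k}\bigr)\, \Delta^+ C_{S_k}\right],
\]
which tends to zero by the choice of $T^n_k$, after splitting into positive and negative parts and applying Fatou's lemma to the positive part (to also cover the case $V(C) = +\infty$) and dominated convergence to the negative part, the dominant being supplied by~\eqref{Main:9}.

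\paragraph{Equality of suprema and main obstacle.} For any c\`adl\`ag $\tilde{C} \in \Cn$ one has $\rdc{\tilde{C}} = 0$, hence $\mdS \tilde{C} = \md \tilde{C}$, and applying the defining property of the $\Lambda$-projection (Definition and Theorem~\ref{Main:154}) to $P^+$ and $P^-$ separately yields $\EW[\int P \md \tilde{C}] = \EW[\int {^\Lambda P} \md \tilde{C}]$; thus $\Vn(\tilde{C}) = V(\tilde{C})$ and $\Cn \subset \Cm$, which delivers $\sup_{\Cn}\Vn \leq \sup_{\Cm}V$. The reverse inequality follows at once from~\eqref{Main:305}, establishing~\eqref{Main:309}. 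The main technical obstacle is the measurable selection producing the shift times $T^n_k$ as genuine $\Lambda$-stopping times (not just $\FA$-stopping times) and the concomitant verification that $\tilde{C}^n$ inherits $\Lambda$-measurability from the $\Lambda$-compatible structure of its jumps; a secondary subtlety is justifying dominated convergence on the reward side when $({^\Lambda P})^\ast$ is not a priori globally integrable, which is what forces the splitting into positive and negative parts above.
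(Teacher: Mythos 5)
Your construction is essentially the paper's: the paper also exhausts the right-jumps of $\rdc{C}$ by $\FA$-stopping times and shifts each jump to nearby strictly later $\Lambda$-stopping times along which ${^\Lambda P}$ converges to $({^\Lambda P})^\ast$ (this selection is exactly what the companion paper's Proposition~3.2~(i) supplies, and your Meyer-section sketch is the right way to prove it). The genuine gaps are in the limit passage. First, you skip the reduction to \emph{bounded} controls (the paper's Proposition~\ref{Main:38}), and without it your error estimates do not close: condition~\eqref{Main:9} cannot serve as a dominating bound for the terms ${^\Lambda P}_{T^n_k}\Delta^+ C_{S_k}$, because the shifted times $T^n_k$ are not charged by $\mdS C$, so neither $({^\Lambda P}_{T^n_k})^-$ nor $({^\Lambda P}_{T^n_k})^+$ is controlled by the integrability of ${^\Lambda P}\wedge 0$ along $\mdS C$. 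What your one-sided selection bound ${^\Lambda P}_{T^n_k}\geq ({^\Lambda P})^\ast_{S_k}-\tfrac1n$ actually gives is an error of order $\tfrac1n\,\rdc{C}_\infty$ on the negative side, and $\EW[\rdc{C}_\infty]$ need not be finite for an admissible $C$; the same issue already undermines your claim that admissibility of $\tilde{C}^n$ (finiteness of $\EW[\int (P\wedge 0)\,\md \tilde{C}^n]$) is ``inherited from~\eqref{Main:9}''.

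Second, the statement asserts an exact limit $V(C)=\lim_n \Vn(\tilde{C}^n)$, not just $\liminf_n \Vn(\tilde{C}^n)\geq V(C)$. Fatou on the positive part only gives the $\liminf$ inequality; to match it from above you need a reverse-Fatou/uniform-integrability ingredient, since for fixed $n$ the values ${^\Lambda P}_{T^n_k}$ can exceed $({^\Lambda P})^\ast_{S_k}$ by an amount that is only pointwise, not integrably, controlled. The paper obtains this from the class($D^\Lambda$) property of ${^\Lambda P}$ together with boundedness of the truncated control (Lebesgue's theorem applied twice, first in $k$, then in the number of jumps $N$); your proposal offers no substitute, and your selection bound is one-sided in precisely the unhelpful direction for this step. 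Finally, the case $V(C)=\infty$ needs a separate (easy) argument — the paper notes that then one of the right-continuous controls $\ldc{C}$, $\rdc{C}$ already produces infinite value for $\Vn$ — whereas your parenthetical Fatou remark does not by itself yield the asserted equality~\eqref{Main:305} in that case. The remaining ingredients of your proposal (measurability of $\tilde{C}^n$, the risk-part convergence by dominated convergence, $\Vn=\Vm$ on $\Cn$, and the easy inequality in~\eqref{Main:309}) are correct and agree with the paper.
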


For the proof of Proposition~\ref{Main:307} we will need the following result:

\begin{Pro}\label{Main:38}
    Under the Assumptions of Proposition~\ref{Main:307} for any $C\in \Cm$ and $\tilde{C}\in \Cn$ we have
	\begin{align*}
			\Vm(C)= \lim_{n\rightarrow \infty} \Vm(C\wedge n),\quad 
			\Vn(\tilde{C})= \lim_{n\rightarrow \infty} \Vn(\tilde{C}\wedge n).
	\end{align*}
\end{Pro}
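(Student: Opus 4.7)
The plan is to treat the risk and reward contributions to $\Vm$ (and $\Vn$) separately, exploiting the monotone pointwise approximation $C\wedge n\uparrow C$ as $n\uparrow\infty$ and applying dominated or monotone convergence to each piece. One restricts to $n\geq c_0$, which ensures $(C\wedge n)_{0-}=c_0$ and $C\wedge n\in\Cm$. For the risk term the sandwich $(C\wedge n)_t\in[c_0,C_t]$ together with convexity of $\rho_t$ yields
\[
\inf_{c\in\RZ}\rho_t(c)\;\leq\;\rho_t((C\wedge n)_t)\;\leq\;\max(\rho_t(c_0),\rho_t(C_t)),
\]
with both bounds $\WM\otimes\md R$-integrable by Assumption~\ref{Main:66}(ii)(c) together with~\eqref{Main:70}. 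Continuity of $\rho_t(\cdot)$ then gives the pointwise convergence $\rho_t((C\wedge n)_t)\to\rho_t(C_t)$, and dominated convergence delivers the desired convergence of the risk part.

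For the reward term the crucial ingredient is the measure monotonicity
\[
\md\ldc{(C\wedge n)}\,\uparrow\,\md\ldc{C},\qquad\md\rdc{(C\wedge n)}\,\uparrow\,\md\rdc{C}\qquad\text{as }n\uparrow\infty.
\]
For the right-jump part this is immediate jump-by-jump, since $\Delta^+(C\wedge n)_s=(C_{s+}\wedge n)-(C_s\wedge n)$ is non-decreasing in $n$ with limit $\Delta^+C_s$. For the left-continuous part, writing $\ldc{(C\wedge n)}_t=(C_t\wedge n)-\sum_{s<t}\bigl((C_{s+}\wedge n)-(C_s\wedge n)\bigr)$, I would establish monotonicity of the increments $\ldc{(C\wedge n)}_b-\ldc{(C\wedge n)}_a$ in $n$ by a case-by-case right-derivative calculation, using that any right-jump of $C$ in $[a,b)$ straddling the level $n$ necessarily forces $C_a\leq n<C_b$. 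With measure monotonicity in hand, splitting ${^\Lambda P}=({^\Lambda P})^+-({^\Lambda P})^-$ and $({^\Lambda P})^*=(({^\Lambda P})^*)^+-(({^\Lambda P})^*)^-$ and applying monotone convergence to each non-negative integrand yields $\EW[\int{^\Lambda P}\mdS(C\wedge n)]\to\EW[\int{^\Lambda P}\mdS C]$; the subtraction of the negative pieces is legitimate since the elementary identity $({^\Lambda P}\wedge 0)^*=({^\Lambda P})^*\wedge 0$ combined with~\eqref{Main:9} guarantees $\EW[\int({^\Lambda P})^-\md\ldc{C}+\int(({^\Lambda P})^*)^-\md\rdc{C}]<\infty$.

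The c\`adl\`ag case $\Vn(\tilde C)=\lim_n\Vn(\tilde C\wedge n)$ follows along the same lines but is easier: since $\tilde C$ is c\`adl\`ag one has $\md(\tilde C\wedge n)\uparrow\md\tilde C$ directly from $\tilde C\wedge n\uparrow\tilde C$, and Definition and Theorem~\ref{Main:154} allows one to replace $P$ by ${^\Lambda P}$ in the reward integral against $\md\tilde C$. The main obstacle in this plan is the measure monotonicity of the left-continuous part $\md\ldc{(C\wedge n)}$ in $n$ for a general l\`adl\`ag $C$ with possibly infinitely many right-jumps; once this is in place the rest is a routine interchange of limits and integrals.
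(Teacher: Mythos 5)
Your proposal is correct and follows essentially the same route as the paper, which likewise treats the risk term by dominated convergence via the convexity bound $|\rho_s(C_s\wedge n)|\leq\max\{\rho_s(C_s)\vee 0,\rho_s(\uP)\vee 0,-(\inf_{c\in\RZ}\rho_s(c)\wedge 0)\}$ and the reward term by monotone convergence. The measure monotonicity you flag as the main obstacle is in fact true and elementary: over any interval $(a,b]$ the increment of $\ldc{(C\wedge n)}$ telescopes along the right-jump times into finitely (or, after a limiting step, countably) many pieces of the form $(C_v\wedge n)-(C_u\wedge n)$ with $C_u\le C_v$, each non-decreasing in $n$, so your plan closes without difficulty.
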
	

\begin{proof}
    This result follows via monotone convergence on the reward part and dominated convergence in the risk part of our target functionals $V$, $\tilde{V}$ as by convexity  of $c\mapsto \rho_s(c)$ we have for $n\geq c_0$ the estimate
	\[
		\left|\rho_s(C_s\wedge n)\right|\leq 
		\max\left\{\rho_s(C_s)\vee 0,\rho_s(\uP)\vee 0,
		-\left(\inf_{c\in \RZ}\rho_s(c)\wedge 0\right)\right\}.
	\]		
\end{proof}
	
\begin{proof}[Proof of Proposition~\ref{Main:307}]
By Proposition~\ref{Main:38}, we can assume without loss of generality that  $C\in \Cm$ is bounded. Note, we will also construct bounded $\tilde{C}^n\in \Cn$, $n\in \NZ$ with~\eqref{Main:305}. For such $\tilde{C}^n$, $n\in \NZ$, we get via Definition and Theorem~\ref{Main:154} and admissibility of $\tilde{C}^n$ that $\Vn(\tilde{C}^n)=\Vm(\tilde{C}^n)$.

\emph{Case $V(C)=\infty$:} In this case, one of the two right-continuous, admissible controls $\ldc{C}$, $\rdc{C}$ (see~\eqref{Main:143}) generates an infinite value for $\tilde{V}$, which then shows~\eqref{Main:305}. 

\emph{Case $V(C)<\infty$:} In this case, we can use \citing{KS98}{Proposition 2.26}{10}, to exhaust the jumps of $\rdc{C}$ by a sequence of $\FA$-stopping times $(T_n)_{n\in \NZ}$, where we can assume without loss of generality that the graphs of the stopping times are disjoint.
Indeed, if this is not the case we can consider instead
the sequence $(\tilde{T}_n)_{n\in \NZ}$ given by
\[
	\tilde{T}_1:=T_1,\quad \tilde{T}_n:=(T_n)_{\cup_{k=1}^{n-1}\{T_n\neq T_k\}},\quad n = 2,3,\dots,
\]
which exhibits the desired properties. Moreover, by \BBD,  there exists for each $n\in \NZ$ a sequence $(T_n^k)_{k\in \NZ}\subset \stm$ such that $T_n^k\geq T_n$ with $\infty>T_n^k>T_n$ on $\{T_n<\infty\}$, $\lim_{k\rightarrow \infty} T_n^k=T_n$ and $({^\Lambda P})_{T_n}^\ast=\lim_{k\rightarrow \infty} {^\Lambda P}_{T_n^k}$ almost surely. Now define
for $k\in \NZ$, $N\in \NZ$, $\tilde{C}^{k,N}\in \Cn$ by
\[
    \tilde{C}_t^{k,N}:=C_t^{c,l}+\sum_{n=1}^N\Delta^+C_{T_n}\mathbb{1}_{\stsetRO{T_n^k}{\infty}}(t),
    \quad t\in [0,\infty),
\]
where $\ldc{C}$ is defined in~\eqref{Main:143}. One can easily see that for a.e. $\omega \in \Omega$, $t\in [0,\infty)$ we have $\lim_{N\rightarrow \infty}\lim_{k\rightarrow \infty} \tilde{C}^{k,N}_t(\omega)=C_t(\omega)$. Hence, using dominated convergence (which is applicable because of~\eqref{eq:cintegrable}, \eqref{eq:infintegrable} and admissibility of $C$), we obtain by continuity of $\rho$ that
\begin{align}\label{Main:308}
    	\EW\left[\int_{[0,\infty)} \rho_t(C_t)\md R_t\right]=\lim_{N\rightarrow \infty}\lim_{k\rightarrow \infty} \EW\left[\int_{[0,\infty)} \rho_t(\tilde{C}_t^{k,N})\md R_t\right].
\end{align}
Next,
\[
   	\EW\left[\int_{[0,\infty)} {^\Lambda P_t} \md \tilde{C}_t^{k,N}\right]
   	=\EW\left[\int_{[0,\infty)} {^\Lambda P_t} \md \ldc{C}_t\right]+\sum_{n=1}^N\EW\left[{^\Lambda P_{T_n^k}}\Delta^+ C_{T_n}\right]
\]
and we have
\begin{align}
    \sum_{n=1}^N\EW\left[{^\Lambda P_{T_n^k}}\Delta^+ C_{T_n}\right]\overset{k\rightarrow \infty}{\longrightarrow}
    \sum_{n=1}^N\EW\left[({^\Lambda P})_{T_n}^\ast\Delta^+ C_{T_n}\right]\overset{N\rightarrow \infty}{\longrightarrow} \sum_{n=1}^\infty\EW\left[({^\Lambda P})_{T_n}^\ast\Delta^+ C_{T_n}\right],
\end{align}
where we have used that ${^\Lambda P}$ is of class($D^\Lambda$) and  that $C$ is bounded to apply Lebesgue's theorem twice.
Therefore, by $V(C)<\infty$,
\begin{align}\label{Main:56}
    \lim_{N\rightarrow\infty}\lim_{k\rightarrow\infty}\EW\left[\int_{[0,\infty)} {^\Lambda P_t} \md \tilde{C}_t^{k,N}\right]
    =\EW\left[\int_{[0,\infty)} {^\Lambda P_t} \mdS C_t\right].
\end{align}
Combining~\eqref{Main:308} and~\eqref{Main:56} shows that the value $\Vm(C)$ is attained as the limit of $\Vn(\tilde{C}^{k,N})$, $k,N\in \NZ$, which finishes our proof.
\end{proof}

An optimal control for~\eqref{Main:2}
will be constructed in  terms of a reference process $L^\Lambda$ emerging from the following stochastic representation of the reward process, similar to an approach taken in \cite{BR01}:
\begin{Lem}\label{Main:19}
Under Assumptions~\ref{Main:66} and~\ref{Main:6}, there exists a 
$\Lambda$-measurable process $L^\Lambda$ such that for any $\Lambda$-stopping time
$S$ we have
\begin{align}\label{Main:33}
	\EW\left[\int_{[S,\infty)}
	\left|\frac{\partial}{\partial c}\rho_t
	\left(\sup_{v\in [S,t]} L^\Lambda_v\right)\right|\md R_t
			\right]<\infty,\\
			\label{Main:13}
	 {^\Lambda P_S}=
	\EW\left[\int_{[S,\infty)}
	\frac{\partial}{\partial c}\rho_t\left(
			\sup_{v\in [S,t]} L^\Lambda_v\right) \md R_t
			\midG \aFA_{S}\right].				    
\end{align}
Here, we can choose $L^\Lambda$ to be maximal in the
sense that for any other $\Lambda$-measurable solution $\tilde{L}$,
satisfying mutatis 
mutandis the two properties~\eqref{Main:33}, \eqref{Main:13}, we have
$\tilde{L}_S\leq L^\Lambda_S$ at any $\Lambda$-stopping time $S$.
The maximal solution $L^\Lambda$ is unique up to indistinguishability and additionally satisfies
\begin{align}\label{Main:24}
			L^\Lambda_S= \essinf_{T\in \stm, T> S} \ell^\Lambda_{S,T}, \quad S \in \stm,
\end{align}					
where, for any $\Lambda$-stopping time $T$ with $S<T$, $\ell^{\Lambda}_{S,T}$ 
is defined as the (up to a $\WM$-nullset) unique $\aFA_S$-measurable random variable solving
\begin{align}\label{Main:25}
\EW\left[{P}_S
		-{P}_T\midG \aFA_{S}\right]=
		\EW\left[\int_{[S,T)} 
		\frac{\partial}{\partial c}\rho_t(	\ell^{\Lambda}_{S,T})\md R_t\midG \aFA_{S}\right]
\end{align}	
on $\{\WM\left(R_{T-}-R_{S-}>0\midG \aFA_{S}\right)>0\}$ and $\ell^{\Lambda}_{S,T}:=\infty$ elsewhere.
 \end{Lem}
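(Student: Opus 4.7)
The plan is to reduce the lemma to the Meyer-$\sigma$-field version of the Bank--El Karoui representation theorem developed in the companion paper \cite{BB18_2}. Identify the ingredients: the ``level process'' will be $P$ (with its $\Lambda$-projection $^\Lambda P$), the integrator will be the risk clock $R$, and the running marginal cost $f_t(\ell):=\tfrac{\partial}{\partial c}\rho_t(\ell)$ serves as the strictly increasing function mapping $\RZ$ continuously and bijectively onto $\RZ$ by Assumption~\ref{Main:66}(ii)(a). Then the representation to be proved,
\[
{^\Lambda P_S}=\EW\!\left[\int_{[S,\infty)} f_t\!\left(\sup_{v\in[S,t]} L^\Lambda_v\right)\md R_t\,\middle|\,\aFA_S\right],\qquad S\in\stm,
\]
is exactly of the form treated there.

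First, verify the hypotheses of the representation theorem from \cite{BB18_2}. Assumption~\ref{Main:6}(i) ensures that $^\Lambda P$ is of class $(D^\Lambda)$ and vanishes once the risk clock has stopped, which provides the required terminal/integrability behaviour. Assumption~\ref{Main:6}(ii) and~(iii) provide, respectively, left-upper-semicontinuity in expectation at predictable $\Lambda$-stopping times and $\Lambda$-$\md R$-right-upper-semicontinuity in expectation at all $\Lambda$-stopping times, which are the two semicontinuity hypotheses needed to guarantee existence of a maximal solution. Assumption~\ref{Main:66} supplies the Inada conditions and the integrability $\EW[\int |\rho_t(c)|\md R_t]<\infty$, $\EW[\int \inf_c \rho_t(c)\md R_t]>-\infty$, which together with the strict monotonicity of $f_t$ enable inversion of the cost structure and yield~\eqref{Main:33}.

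Next, derive the essinf characterization~\eqref{Main:24}. For any $T\in\stm$ with $T>S$, take differences of the representation at $S$ and $T$ and split the integral at $T$: using the tower property with respect to $\aFA_T\supset \aFA_S$ and the flow property $\sup_{v\in[S,t]}L^\Lambda_v = L^\Lambda_S \vee \sup_{v\in(S,t]}L^\Lambda_v$, one obtains
\[
\EW[P_S-P_T\mid\aFA_S]=\EW\!\left[\int_{[S,T)} f_t\!\left(L^\Lambda_S\vee\sup_{v\in[S,t]}L^\Lambda_v\right)\md R_t\,\middle|\,\aFA_S\right].
\]
On the set $\{\WM(R_{T-}-R_{S-}>0\mid\aFA_S)>0\}$ the map $\ell\mapsto \EW[\int_{[S,T)} f_t(\ell)\md R_t\mid\aFA_S]$ is strictly increasing and bijective, so $\ell^\Lambda_{S,T}$ is well defined and $L^\Lambda_S\leq \ell^\Lambda_{S,T}$; taking the essinf over $T>S$ yields $L^\Lambda_S\leq \essinf_{T>S}\ell^\Lambda_{S,T}$. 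The reverse inequality is obtained by approximating the essinf along a sequence $T_n\downarrow S$ in $\md R$-measure (so that $\sup_{v\in[S,T_n)}L^\Lambda_v\to L^\Lambda_S$ by right-continuity of the supremum process of the maximal solution) and passing to the limit in~\eqref{Main:25}.

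Finally, uniqueness of the maximal $L^\Lambda$ up to indistinguishability is immediate from the Meyer Section Theorem: if $\tilde L$ is any other $\Lambda$-measurable solution, maximality gives $\tilde L_S\leq L^\Lambda_S$ a.s.\ at every $S\in\stm$, so by Corollary~\ref{Main:68} the set $\{\tilde L>L^\Lambda\}$ is evanescent, and applied to two maximal solutions this yields indistinguishability. The main obstacle in the whole argument is the first step, namely the transfer of the Bank--El Karoui representation from the optional/predictable setting of \cite{BK04} to the Meyer-$\sigma$-field setting where one must carefully handle $\Lambda$-accessible and totally $\Lambda$-inaccessible jumps simultaneously; this is exactly the refinement carried out in \cite{BB18_2}, on which we rely.
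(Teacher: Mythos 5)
Your proposal follows essentially the same route as the paper: the paper's proof consists precisely of setting $X:={^\Lambda P}$, $g_t(\ell):=\frac{\partial}{\partial c}\rho_t(\ell)$, $\mu:=\md R$ and invoking the Meyer-$\sigma$-field representation theorem of the companion paper, whose statement already contains existence, the integrability~\eqref{Main:33}, maximality, uniqueness up to indistinguishability, and the essinf formula~\eqref{Main:24}--\eqref{Main:25}; note the representation theorem lives in \cite{BB18}, not \cite{BB18_2}, which is the general-theory companion. Your additional hand-made derivation of~\eqref{Main:24} is therefore redundant, and as written it has two soft spots: the displayed identity obtained by ``taking differences'' of the representation at $S$ and $T$ is in general only an inequality (the tail integrals over $[T,\infty)$ need not cancel, since $\sup_{v\in[S,t]}L^\Lambda_v\geq\sup_{v\in[T,t]}L^\Lambda_v$), which still suffices for $L^\Lambda_S\leq\ell^\Lambda_{S,T}$, but the reverse inequality cannot be argued via ``right-continuity of the supremum process of the maximal solution'': $L^\Lambda$ is merely $\Lambda$-measurable (in the explicit examples it is not right-continuous), so $\lim_{t\downarrow S}\sup_{v\in[S,t]}L^\Lambda_v=L^\Lambda_S$ is not available, and the genuine argument (choosing the level passage times of $L^\Lambda$ as competitors) is exactly what the cited theorem encapsulates. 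Since you explicitly fall back on that theorem anyway, this is not a gap in substance, only in the optional re-derivation; the verification of its hypotheses from Assumptions~\ref{Main:66} and~\ref{Main:6} and the Section-Theorem uniqueness argument are fine.
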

	 
\begin{proof}
	Set $X_t(\omega):={^\Lambda P}_t(\omega)$, 
	$g_t(\omega,\ell):=\frac{\partial}{\partial c}\rho_t(\omega,\ell)$,
	$\mu(\omega,dt):=\md R_t(\omega)$. Then Assumptions~\ref{Main:66} and~\ref{Main:6} allow us to apply the representation theorem of \cite{BB18},
 to obtain $L^\Lambda$ with the
	desired properties.
\end{proof}
	 
 The next theorem  shows that under some additional integrability assumptions on $L^\Lambda$ the value in~\eqref{Main:2}	is finite and attained by a control explicitly constructed in terms of $L^\Lambda$:
	 
\begin{Thm}\label{Main:14} Let Assumptions~\ref{Main:66} and~\ref{Main:6} be satisfied and let $L^\Lambda$ be a $\Lambda$-measurable process 
	  satisfying~\eqref{Main:33}, \eqref{Main:13}. Suppose
	that the control $C^{L^\Lambda}$ defined by
	\begin{align}\label{Main:48}
		C^{L^\Lambda}_{0-}:=\uP ,\quad C^{L^\Lambda}_t
		:=\uP \vee \sup_{v\in [0,t]} L^\Lambda_v,\quad
				t\in [0, \infty),
	\end{align}
	satisfies	
	\begin{align}
	    \label{Main:81}
	    \EW\left[ \int_{[0,\infty)} \left\{
		\frac{\partial}{\partial c}\rho_t\left(
				 C^{L^\Lambda}_t\right)(C^{L^\Lambda}_t-\uP) \right\}\vee 0 \;\md R_t\right]<\infty.
	\end{align}
	Then $C^{L^\Lambda}$ is contained in $\Cm$ and is optimal for the relaxed problem~\eqref{Main:2}:
	\begin{align}
	    &C^{L^\Lambda}\in \argmax_{C\in \Cm}\Vm(C)
	\end{align}
	and
	\begin{align}\label{Main:22} 
	   V(C^{L^\Lambda})= \EW\left[ \int_{[0,\infty)} \left\{
		\frac{\partial}{\partial c}\rho_t\left(
				 C^{L^\Lambda}_t\right)(C^{L^\Lambda}_t-\uP) -\rho_t\left(C^{L^\Lambda}_t\right) \right\}\md R_t\right]<\infty. 
	\end{align}			
\end{Thm}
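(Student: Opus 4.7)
The plan is to adapt the running-supremum optimality argument of \cite{BR01,BK04} to the present $\Lambda$-measurable, l\`adl\`ag setting. First I would check that $C^{L^\Lambda}$ lies in $\Cm$: it is $\Lambda$-measurable as the running supremum of a $\Lambda$-measurable process, l\`adl\`ag and non-decreasing, starting at $\uP$. For the risk bound \eqref{Main:70} I would combine the convex subgradient inequality $\rho_t(C^{L^\Lambda}_t)-\rho_t(\uP)\leq \frac{\partial}{\partial c}\rho_t(C^{L^\Lambda}_t)(C^{L^\Lambda}_t-\uP)$ with \eqref{Main:81} for an integrable majorant and \eqref{eq:infintegrable} for the minorant; reward-admissibility \eqref{Main:9} will drop out of the value formula derived below.

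The core step is a stochastic Fubini identity. Splitting the $\mdS C$-integral into its $\md\ldc{C}$- and $\md\rdc{C}$-parts as in Definition~\ref{Main:139}, exhausting the jumps of $\rdc{C}$ by $\Lambda$-stopping times via Meyer's section theorem (Theorem~\ref{Main:84}), substituting the representation \eqref{Main:13} for ${^\Lambda P}_s$ into the $\md\ldc{C}$-term and its right-upper-semicontinuous envelope into the $\md\rdc{C}$-term (matching the $\phi^\ast$ in the $\mdS$-integral), applying the tower property with respect to $\aFA_S$, and finally swapping the order of integration (splitting the integrand in sign, using \eqref{Main:33} and admissibility of $C$ for integrability), I would obtain for every $C\in\Cm$
\[
\EW\!\left[\int_{[0,\infty)} {^\Lambda P}_t\,\mdS C_t\right]
= \EW\!\left[\int_{[0,\infty)}\!\left(\int_{[0,t]}\frac{\partial}{\partial c}\rho_t\!\left(\sup_{v\in[s,t]} L^\Lambda_v\right)\mdS C_s\right)\md R_t\right].
\]
Since $\frac{\partial}{\partial c}\rho_t$ is strictly increasing in $c$ and $\sup_{v\in[s,t]}L^\Lambda_v\leq C^{L^\Lambda}_t$, the inner integral is bounded above by $\frac{\partial}{\partial c}\rho_t(C^{L^\Lambda}_t)(C_t-\uP)$. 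Adding the convexity bound $\rho_t(C^{L^\Lambda}_t)-\rho_t(C_t)\leq \frac{\partial}{\partial c}\rho_t(C^{L^\Lambda}_t)(C^{L^\Lambda}_t-C_t)$ then cancels the $C_t$-contributions and yields $\Vm(C)\leq \Vm(C^{L^\Lambda})$ with the right-hand side expressed as in \eqref{Main:22}. For equality at $C=C^{L^\Lambda}$ I would invoke the \emph{flat-off} property: at any time $s$ in the support of $\mdS C^{L^\Lambda}$ the running supremum sets a new record, so $\sup_{v\in[s,t]} L^\Lambda_v = C^{L^\Lambda}_t$ for $t\geq s$, turning both preceding inequalities into equalities; finiteness of the resulting expression is precisely \eqref{Main:81} for the positive part, combined with the convex subgradient bound and \eqref{eq:cintegrable}, \eqref{eq:infintegrable} for the negative part.

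The principal obstacle will be executing the stochastic Fubini argument faithfully in the presence of the asymmetric $\mdS$-integration and merely l\`adl\`ag controls. Specifically, one must marry the $\md\ldc{C}$-contribution with the representation \eqref{Main:13} at $\Lambda$-stopping times (appealing to Proposition~\ref{Main:165} and Corollary~\ref{Main:68} to pin down integrands up to indistinguishability) and the $\md\rdc{C}$-contribution with the right-upper-semicontinuous envelope $({^\Lambda P})^\ast$, relying on the auxiliary $\mdS$-calculus collected in Appendix~\ref{app:integration}. A secondary but delicate point is verifying the flat-off property at two-sided jumps of $C^{L^\Lambda}$, where one has to compare $L^\Lambda_{s-}, L^\Lambda_s$, and $L^\Lambda_{s+}$ to the corresponding values of $C^{L^\Lambda}$ and confirm compatibility with the right-upper-semicontinuous envelope used in the $\mdS$-integral at such times.
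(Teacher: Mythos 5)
Your architecture is the paper's: verify optimality through the representation \eqref{Main:13}, the $\mdS$-Fubini result (Proposition~\ref{app:1}), the projection identity (Proposition~\ref{app:5}), convexity, and a flat-off argument at $C=C^{L^\Lambda}$, with finiteness drawn from \eqref{Main:81}, \eqref{eq:cintegrable}, \eqref{eq:infintegrable}. The genuine gap is in your central step: you claim the \emph{exact} identity
\begin{align}
\EW\left[\int_{[0,\infty)}{^\Lambda P}_t\,\mdS C_t\right]
=\EW\left[\int_{[0,\infty)}\left(\int_{[0,t]}\frac{\partial}{\partial c}\rho_t\Bigl(\sup_{v\in[s,t]}L^\Lambda_v\Bigr)\mdS C_s\right)\md R_t\right]
\end{align}
for \emph{every} $C\in\Cm$, justified ``by \eqref{Main:33} and admissibility of $C$''. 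Dropping the $\Lambda$-projection (Proposition~\ref{app:5}) and applying Fubini here requires $\WM\otimes\md R$-integrability of the $s$-dependent field $\frac{\partial}{\partial c}\rho_t(\sup_{v\in[s,t]}L^\Lambda_v)$ against $\mdS C$; since for $s$ near $t$ the argument $\sup_{v\in[s,t]}L^\Lambda_v$ can be close to $L^\Lambda_t$, this would essentially demand integrability of $\bigl|\frac{\partial}{\partial c}\rho_t(L^\Lambda_t)\bigr|\,\md R_t$, which neither \eqref{Main:33} (a fixed lower limit $S$ only) nor admissibility of a generic $C$ delivers. The identity for all $C$ is also more than you need: for general $C$ only ``$\leq$'' enters your argument. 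The paper's fix is exactly this: use monotonicity of $\Lambda$-projections to replace the argument by $C^{L^\Lambda}_t=\uP\vee\sup_{v\in[0,t]}L^\Lambda_v$, a field that does not depend on the lower integration limit, for which the projection can be removed and Fubini applied for every \emph{bounded} $C$ (the reduction to bounded controls via Proposition~\ref{Main:38} is also missing from your sketch); the exact identity is then established only at $C=C^{L^\Lambda}$, where the flat-off property together with \eqref{Main:81} supplies the integrability, and this same computation yields the admissibility condition \eqref{Main:9} rather than it ``dropping out''.

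On your second flagged point: the pathwise comparison $\sup_{v\in(s,t]}L^\Lambda_v=C^{L^\Lambda}_t$ at right-jump times of $C^{L^\Lambda}$ is correct (a right jump forces $\sup_{v\in(s,t]}L^\Lambda_v\geq C^{L^\Lambda}_{s+}>C^{L^\Lambda}_s\geq\uP$ for all $t>s$), but it only becomes usable after the projections and the $c_0\vee$ have been removed at precisely those times; that removal requires right-limit existence with an integrable domination and the comparison of the right-upper-semicontinuous envelope of a projection with the projection of the envelope. In the paper this is Lemma~\ref{Main:72}, whose proof constructs, via the Meyer Section Theorem, $\Lambda$-stopping times decreasing to the jump time on which $L^\Lambda\geq\uP$. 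Your sketch correctly identifies this as the delicate spot but does not supply that mechanism; once you adopt the paper's replacement of the integrand by the $C^{L^\Lambda}$-composed field for the upper bound and carry out the Lemma~\ref{Main:72}-type argument (or an equivalent domination plus envelope comparison) for the equality \eqref{Main:26} at $C=C^{L^\Lambda}$, your proof coincides with the paper's.
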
 
	
\begin{Rem}
 One way to motivate the construction of an optimal control via~\eqref{Main:24} and~\eqref{Main:48} is the following. Due to concavity of the problem, first order conditions are necessary and sufficient for optimality of a control. For our problem, they essentially mean that a control $\widehat{C}$ will be optimal if, at any time $S$, it balances the (perceived) rewards of an intervention with its impact on a control's future risk assessment in such a way that, at any decision time $S\in \stm$,
 \begin{align*}
     {^\Lambda P}_{S} \leq \EW\left[\int_{[S,\infty)}\frac{\partial}{\partial c}\rho_t(\widehat{C}_t)\md R_t\midG \aFA_{S}\right]
 \end{align*}
  with equality holding true if $S$ is an optimal time to intervene. So, if $S \in \stm$ is such a time, we get for any time $T \in \stm$ with $T > S$ that
  \begin{align*}
      \EW\left[{^\Lambda P}_S
		-{^\Lambda P}_T\midG \aFA_{S}\right] &\geq
		\EW\left[\int_{[S,T)} 
		\frac{\partial}{\partial c}\rho_t(\widehat{C}_t)\md R_t\midG \aFA_{S}\right]
		 \geq
		\EW\left[\int_{[S,T)} 
		\frac{\partial}{\partial c}\rho_t(\widehat{C}_S)\md R_t\midG \aFA_{S}\right].
  \end{align*}
  Comparing this with~\eqref{Main:25} and recalling that $T$ was arbitrary shows that at times of intervention we must have 
  \begin{align}\label{Main:811}
  \widehat{C}_S \leq \essinf_T \ell^{\Lambda}_{S,T} = L^\Lambda_S,
  \end{align} 
  where the last identity is just~\eqref{Main:24}. In fact, we even get equality in the above estimates if $T=T_S$ is the next time of intervention after time $S$. This suggests that equality should hold true in~\eqref{Main:811} at times of intervention. Conversely, if $S$ is not a time of intervention, similar considerations lead to
  \begin{align*}
      \EW\left[{^\Lambda P}_S
		-{^\Lambda P}_{T_S}\midG \aFA_{S}\right] &\leq
		\EW\left[\int_{[S,T_S)} 
		\frac{\partial}{\partial c}\rho_t(\widehat{C}_t)\md R_t\midG \aFA_{S}\right]
		 =
		\EW\left[\int_{[S,T_S)} 
		\frac{\partial}{\partial c}\rho_t(\widehat{C}_S)\md R_t\midG \aFA_{S}\right],
  \end{align*}
  and, so, $\widehat{C}_S \geq \ell^{\Lambda}_{S,T_S} \geq L^\Lambda_S$. Admissible controls being increasing above $c_0$, we thus expect $\widehat{C}_t = c_0 \vee \sup_{v \in [0,t]} L^{\Lambda}_v$ as in~\eqref{Main:48} to be optimal, a conjecture confirmed rigorously by our Theorem~\ref{Main:14}.
\end{Rem}
			
The rest of this section is devoted to the proof of Theorem~\ref{Main:14}. We start by showing a technical result:

\begin{Lem}\label{Main:72}
	In the setting of Theorem~\ref{Main:14}, we have for any $\FA$-stopping time $T$ on 
	$\{\Delta^+ C_T^{L^\Lambda}>0\}$ that
				\begin{align}\label{Main:53}
					\left({^\Lambda\left(\int_{[\cdot,\infty)}
					\frac{\partial}{\partial c}\rho_t\left(
							 C_t^{L^\Lambda} \right) \md R_t\right)}\right)_T^\ast
					=\left({^\Lambda\left(\int_{[\cdot,\infty)}
					\frac{\partial}{\partial c}\rho_t\left(
							 \sup_{v\in [\cdot,t]}L^\Lambda_v\right) \md R_t\right)}\right)_T^\ast,						\end{align}
	where $(\cdot)^\ast$ is defined in~\eqref{Main:5}.
\end{Lem}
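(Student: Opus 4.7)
The plan is to show that both sides of~\eqref{Main:53} coincide with $({^\Lambda P})_T^\ast$ on $E:=\{\Delta^+ C_T^{L^\Lambda}>0\}$. Denote $\mathcal{I}_s:=\int_{[s,\infty)}\frac{\partial}{\partial c}\rho_t(C_t^{L^\Lambda})\,\md R_t$ and $\mathcal{I}'_s:=\int_{[s,\infty)}\frac{\partial}{\partial c}\rho_t(\sup_{v\in[s,t]}L^\Lambda_v)\,\md R_t$, with $\Lambda$-projections $Z:={^\Lambda\mathcal{I}}$ and $Z':={^\Lambda\mathcal{I}'}$. The right-hand side of~\eqref{Main:53} is easy to identify: at any $\Lambda$-stopping time $S$, Definition and Theorem~\ref{Main:154} gives $Z'_S=\EW[\mathcal{I}'_S\midG \aFA_S]$, which by~\eqref{Main:13} equals ${^\Lambda P}_S$; since both $Z'$ and ${^\Lambda P}$ are $\Lambda$-measurable and coincide at every $\Lambda$-stopping time, Corollary~\ref{Main:68} yields $Z'={^\Lambda P}$ up to indistinguishability, and hence the right-hand side of~\eqref{Main:53} equals $({^\Lambda P})_T^\ast$.

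The inequality $Z^\ast_T\geq (Z')^\ast_T$ is immediate from $C^{L^\Lambda}_t\geq \sup_{v\in[s,t]}L^\Lambda_v$ (for $t\geq s$) together with monotonicity of $\frac{\partial}{\partial c}\rho_t(\cdot)$. The reverse inequality on $E$ rests on the geometric identity $C^{L^\Lambda}_t=\sup_{v\in(T,t]}L^\Lambda_v$ for every $t>T$ on $E$: indeed, $\Delta^+ C^{L^\Lambda}_T>0$ is equivalent to $\limsup_{v\downarrow T}L^\Lambda_v>C^{L^\Lambda}_T=\uP\vee\sup_{v\in[0,T]}L^\Lambda_v$, which forces $\sup_{v\in(T,t]}L^\Lambda_v$ to dominate the contribution from $[0,T]$ and from $\uP$. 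As a consequence $\sup_{v\in[s,t]}L^\Lambda_v\uparrow C^{L^\Lambda}_t$ pointwise as $s\downarrow T$, and continuity of $\frac{\partial}{\partial c}\rho_t(\cdot)$ together with an integrable dominating function furnished by~\eqref{Main:33} applied at some $\Lambda$-stopping time $S_0>T$ (which exists by~\BBD) gives, via dominated convergence, $\mathcal{I}_s-\mathcal{I}'_s\to 0$ pointwise on $E$ as $s\downarrow T$.

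The main technical obstacle is lifting this pointwise convergence of the unprojected integrals to the right-upper-semicontinuous envelopes of the $\Lambda$-projections at $T$, since the conditioning $\sigma$-fields $\aFA_s$ vary with $s$. My approach is by contradiction: assuming $(Z-Z')^\ast_T>\epsilon$ on a set $F\subset E$ with $\WM(F)>0$, the right-upper-semicontinuity and $\Lambda$-measurability of the process $(Z-Z')^\ast$ together with the Meyer Section Theorem~\ref{Main:84} produce a sequence of $\Lambda$-stopping times $\tilde R_n\downarrow T$ on $F$ with $Z_{\tilde R_n}-Z'_{\tilde R_n}\geq\epsilon/4$ there; using that $F$ is determined by observations of $\Lambda$-measurable processes at any time strictly after $T$ (so $\mathbb{1}_F$ is $\aFA_{\tilde R_n}$-measurable) and the tower property, we obtain
\[
\tfrac{\epsilon}{4}\WM(F)\leq \EW\bigl[(Z_{\tilde R_n}-Z'_{\tilde R_n})\mathbb{1}_F\bigr]=\EW\bigl[(\mathcal{I}_{\tilde R_n}-\mathcal{I}'_{\tilde R_n})\mathbb{1}_F\bigr]\xrightarrow{n\to\infty}0
\]
by dominated convergence, a contradiction. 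Hence $(Z-Z')^\ast_T=0$ a.s.\ on $E$, and subadditivity of $\limsup$ yields $Z^\ast_T\leq (Z')^\ast_T$ on $E$, which combined with the first two steps establishes~\eqref{Main:53}.
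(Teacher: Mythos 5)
Your opening steps are sound: identifying the right-hand side of~\eqref{Main:53} with $({^\Lambda P})_T^\ast$ via~\eqref{Main:13} and Corollary~\ref{Main:68}, the inequality ``$\geq$'' by monotonicity of $\Lambda$-projections, and the geometric identity $C^{L^\Lambda}_t=\sup_{v\in(T,t]}L^\Lambda_v$ for $t>T$ on $\{\Delta^+C^{L^\Lambda}_T>0\}$ (this is the paper's~\eqref{Main:49}) are all correct, and your Section-Theorem/Borel--Cantelli construction of $\Lambda$-stopping times $\tilde R_n\downarrow T$ is the same device the paper uses in its ``Claim'' (also the measurability of $\mathbb{1}_F$ is fixable: $F\in\FA_{T+}=\FA_T$ and $T<\tilde R_n$ on $F$ give $F\in\FA_{\tilde R_n-}\subset\aFA_{\tilde R_n}$). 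The genuine gap is the final dominated convergence $\EW[(\mathcal{I}_{\tilde R_n}-\mathcal{I}'_{\tilde R_n})\mathbb{1}_F]\to 0$. You need one integrable dominating function valid \emph{uniformly in $n$}, and the one you invoke --- \eqref{Main:33} at a fixed $\Lambda$-stopping time $S_0>T$ --- only controls the integrand $\frac{\partial}{\partial c}\rho_t\bigl(\sup_{v\in[\tilde R_n,t]}L^\Lambda_v\bigr)$ for $t\geq S_0$, whereas the domain of integration contains $[\tilde R_n,S_0)$, which grows to $(T,S_0)$ as $n\to\infty$. On that region $\sup_{v\in[\tilde R_n,t]}L^\Lambda_v$ has no lower bound ($L^\Lambda$ need not be bounded below; in the explicit solutions of Section~4 it even takes the value $-\infty$), so the negative part of $\frac{\partial}{\partial c}\rho_t\bigl(\sup_{v\in[\tilde R_n,t]}L^\Lambda_v\bigr)$ is not dominated by any fixed $\WM\otimes\md R$-integrable function; \eqref{Main:33} applied at each $\tilde R_n$ gives finiteness term by term, but that is not domination, and Fatou runs the wrong way.

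This is precisely the difficulty the paper's proof is built around: it carries out the dominated-convergence step for the field $c_0\vee\sup_{v\in[\cdot,t]}L^\Lambda_v$, where the uniform lower bound $0\wedge\frac{\partial}{\partial c}\rho_t(c_0)$ (integrable by~\eqref{eq:cintegrable} and convexity) makes Lebesgue's theorem legitimate, and then removes the ``$c_0\vee$'' by constructing, again with the Meyer Section Theorem~\ref{Main:84}, approximating $\Lambda$-stopping times $T_n\downarrow T$ that \emph{in addition} satisfy $L^\Lambda_{T_n}\geq c_0$ --- a construction that works exactly because $\Delta^+C^{L^\Lambda}_T>0$ forces $\{L^\Lambda\geq c_0\}$ to charge every right-neighbourhood of $T$ on that event (it also routes the comparison through optional projections and the upper/lower envelopes via~\BBF). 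Your $\tilde R_n$ are selected only by the condition $Z-Z'>\epsilon/2$ and need not land in $\{L^\Lambda\geq c_0\}$; intersecting your section set with $\{L^\Lambda\geq c_0\}$ would destroy the lower bound $Z_{\tilde R_n}-Z'_{\tilde R_n}\geq\epsilon/4$, since the two sets of times need not meet. So, as written, the contradiction step is unproven, and the natural repair essentially reproduces the paper's two-stage argument.
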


\begin{proof}
    For the sake of notational simplicity we will write in the following just $L$ instead of $L^\Lambda$.
	By monotonicity of $\Lambda$-projections it is clear that 
	``$\geq$'' is satisfied and we only have to prove ``$\leq$'' in~\eqref{Main:53}. 
	
	 First, let $\tilde{T}:=T_{\Gamma}$ for $\Gamma:=\{\Delta^+ C_T^L>0\}$
	 and consider a sequence of $\FA$-stopping times $(T_n)_{n\in \NZ}$ such that $T_{n}\geq T_{n+1}\geq \tilde{T}$ with $\infty>T_n>\tilde{T}$ on $\{\tilde{T}<\infty\}=\Gamma$ for all $n\in \mathbb{N}$ and such that $\lim_{n\rightarrow \infty} T_n=\tilde{T}$.
	 Then, for $n\in \NZ$ and $t\geq 0$,
	\begin{align*}
		0\wedge \frac{\partial}{\partial c}\rho_t\left(
				 c_0\right) \leq \mathbb{1}_{[T_n,\infty)}(t)\frac{\partial}{\partial c}\rho_t\left(c_0\vee 
				 \sup_{v\in [T_n,t]}L_v\right)\leq \left|\frac{\partial}{\partial c}\rho_t\left(c_0\vee
				 \sup_{v\in [0,t]}L_v\right)\right|
	\end{align*}
	and
	\[
		\lim_{n\rightarrow \infty} \mathbb{1}_{[T_n,\infty)}(t)\frac{\partial}{\partial c}\rho_t\left(
				 c_0\vee \sup_{v\in [T_n,t]}L_v\right)=\mathbb{1}_{(T,\infty)}\frac{\partial}{\partial c}\rho_t\left(c_0\vee 
				 \sup_{v\in (\tilde{T},t]}L_v\right).
	\] 
		By integrability assumption
	\eqref{Main:33} on $L$ and because $(T_n)_{n\in \NZ}$ was arbitrary, we can use dominated convergence to conclude
	\begin{align}\label{Main:73}
	\left(\int_{[\cdot,\infty)}\frac{\partial}{\partial c}\rho_t\left(c_0\vee
				 \sup_{v\in [\cdot,t]}L_v\right)\md R_t\right)_{\Tilde{T}+}=\int_{(\tilde{T},\infty)}\frac{\partial}{\partial c}\rho_t\left(c_0\vee
				 \sup_{v\in (\tilde{T},t]}L_v\right)\md R_t.
	\end{align}
	Moreover, we have for a.e. $\omega\in \Gamma$ and all $t>T(\omega)$ that
	\begin{align}\label{Main:49}
	    \sup_{v\in (T(\omega),t]}L_v(\omega)=\sup_{v\in [0,t]}L_v(\omega).
	 \end{align}

	Let us now prove ``$\leq$'' in~\eqref{Main:53}.  By \BBF,\ we know
	\begin{align}\label{Main:51}
		{^\Lambda\left(\int_{[\cdot,\infty)}
		\frac{\partial}{\partial c}\rho_t\left(
				 \uP \vee \sup_{v\in [0,t]}L_v\right) \md R_t\right)}_T^\ast\leq {^\mathcal{O}\left(\int_{(\cdot,\infty)}
		\frac{\partial}{\partial c}\rho_t\left(
				 \uP \vee \sup_{v\in [0,t]}L_v\right) \md R_t\right)}_T.
	\end{align}
	Since $\Gamma\in \FA_T$, we obtain
	\begin{align}\label{Main:47}
		&{^\mathcal{O}\left(\int_{(\cdot,\infty)}
		\frac{\partial}{\partial c}\rho_t\left(
				 \uP \vee \sup_{v\in [0,t]}L_v\right) \md R_t\right)}_T\mathbb{1}_{\Gamma}\\
		&\hspace{13ex}\overset{\eqref{Main:49}}{=} \EW\left[\left(\int_{(T,\infty)}
		\frac{\partial}{\partial c}\rho_t\left(c_0\vee
				 \sup_{v\in (T,t]}L_v\right) \md R_t \right)\mathbb{1}_{\Gamma}\midG \FA_{T}\right]\\
		&\hspace{13ex}\overset{\eqref{Main:73}}{=} \EW\left[\left(\int_{[\cdot,\infty)}
		\frac{\partial}{\partial c}\rho_t\left(c_0\vee
				 \sup_{v\in [\cdot,t]}L_v\right) \md R_t \right)_{T+}\mathbb{1}_{\Gamma}\midG \FA_{T}\right]\\
		&\hspace{13ex}={^\mathcal{O}\left(\left(\int_{[\cdot,\infty)}
		\frac{\partial}{\partial c}\rho_t\left(c_0\vee
				 \sup_{v\in [\cdot,t]}L_v\right) \md R_t\right)_+\right)}_T \mathbb{1}_{\Gamma}.
	\end{align}
	Now, we can apply again \BBF, to obtain on $\Gamma$ that
	\begin{align}\label{Main:30}
		{^\mathcal{O}\left(\left(\int_{[\cdot,\infty)}
		\frac{\partial}{\partial c}\rho_t\left(c_0\vee
				 \sup_{v\in [\cdot,t]}L_v\right) \md R_t\right)_+\right)}_T	 \leq {^\Lambda\left(\int_{[\cdot,\infty)}
		\frac{\partial}{\partial c}\rho_t\left(c_0\vee
				 \sup_{v\in [\cdot,t]}L_v\right) \md R_t\right)}_{T\ast},
	\end{align}
	where $(\cdot)_\ast$ denotes the right-lower-semicontinuous
	envelope defined in~\eqref{Main:164}.
	Next, we will need the following claim, which we will prove at the end:
	
	\emph{Claim:} There exists a sequence $(T_n)_{n\in \NZ}\subset \stm$ such that for $n\in \NZ$ we have
	$T_{n}\geq T_{n+1}\geq \tilde{T}$, $T_n>\tilde{T}$ on $\{\tilde{T}<\infty\}$, $L_{T_n}\geq c_0$ on $\{T_n<\infty\}$ and $\lim_{n\rightarrow \infty}T_n=\tilde{T}$.
	
	Using the sequence of $\Lambda$-stopping times from the previous claim leads on $\Gamma$ to
	\begin{align}\label{Main:3}
	    {^\Lambda\left(\int_{[\cdot,\infty)}
		\frac{\partial}{\partial c}\rho_t\left(c_0\vee
				 \sup_{v\in [\cdot,t]}L_v\right) \md R_t\right)}_{T\ast}\nonumber
		\leq&\ \limsup_{n\rightarrow \infty} 
		{^\Lambda\left(\int_{[\cdot,\infty)}
		\frac{\partial}{\partial c}\rho_t\left(c_0\vee
				 \sup_{v\in [\cdot,t]}L_v\right) \md R_t\right)}_{T_n}\\
		=&\ \limsup_{n\rightarrow \infty} \nonumber
		{^\Lambda\left(\int_{[\cdot,\infty)}
		\frac{\partial}{\partial c}\rho_t\left(
				 \sup_{v\in [\cdot,t]}L_v\right) \md R_t\right)}_{T_n}\\
		\leq &\ 
		{^\Lambda\left(\int_{[\cdot,\infty)}
		\frac{\partial}{\partial c}\rho_t\left(
				 \sup_{v\in [\cdot,t]}L_v\right) \md R_t\right)}_{T}^\ast.
	\end{align}
	Combining~\eqref{Main:51}, \eqref{Main:47}, \eqref{Main:30} and~\eqref{Main:3} completes the proof of ``$\leq$'' in~\eqref{Main:53} once we have proven the above claim.
	
	\emph{Proof of the claim:} The proof is analogous to the proof of \BBD. For $n\in \NZ$ we set $\epsilon_n:=2^{-n}$ and
\begin{align}
	B_n:=\stsetO{\tilde{T}}{\infty}\cap \stsetRO{0}{\tilde{T}+\frac1n,
	}\cap \left\{L\geq c_0\right\}\in \Lambda.
\end{align}
Then there exists by the Meyer Section Theorem (see Theorem~\ref{Main:84}) for each $n\in \NZ$ a $\Lambda$-stopping time $S_n$ such that the graph of $S_n$ is contained in $B_n$
and $\WM(S_n<\infty)>\WM(\pi(B))-\epsilon_n$. Now we set 
$T_n:=\min_{k\in \{1,\dots,n\}} S_k$, $n\in \NZ$. Using $\pi(B_n)=\Gamma$,
a Borel-Cantelli argument shows that the sequence $(T_n)_{n\in \mathbb{N}}\subset \stm$
will satisfy the desired properties.
\end{proof}

Now we have our tools at hand to prove the main result of this section:		

\textbf{Proof of Theorem~\ref{Main:14}:} 
	For the sake of notational simplicity, we will again just write $L$ instead of $L^\Lambda$. We start with the observation that convexity of $\rho=\rho_t(c)$ in $c$ gives
	\[
	    \inf_{c\in \RZ}\rho(c)-\rho_t(c_0)
	    \leq \rho(C^L)-\rho(c_0)\leq \frac{\partial}{\partial c}\rho\left(
				 C^{L}\right) (C^{L}-\uP).
	\]
	So, \eqref{Main:81} in conjunction with~\eqref{eq:cintegrable} and~\eqref{eq:infintegrable} implies that $\frac{\partial}{\partial c}\rho\left(C^{L}\right) (C^{L}-\uP)$ is $\WM\otimes \md R$-integrable and that $C^L$ of~\eqref{Main:48} has finite risk in the sense of~\eqref{Main:70}. In particular, the expectation in~\eqref{Main:22} is finite. 
	
	As $L$ solves~\eqref{Main:13} we obtain by monotonicity of $\Lambda$-projections
	\[
		{^\Lambda P}\leq {^\Lambda\left(\int_{[\cdot,\infty)}
		\frac{\partial}{\partial c}\rho_t\left(
				 C^{L}_t\right) \md R_t\right)}.
	\]
	Therefore, we get for any bounded $C\in \Cm$ that
	\begin{align}\label{Main:21}
		\EW \left[ \int_{[0,\infty)} {^\Lambda P}_s \mdS C_s\right]
		&\leq  \EW\left[ \int_{[0,\infty)} {^\Lambda\left(\int_{[\cdot,\infty)}
		\frac{\partial}{\partial c}\rho_t\left(
				 C^{L}_t\right) \md R_t\right)}_s \mdS C_s\right]\\
		&=\ \EW\left[ \int_{[0,\infty)} \int_{[s,\infty)}
		\frac{\partial}{\partial c}\rho_t\left(
				 C^{L}_t\right) \md R_t  \mdS C_s\right]\nonumber\\
		&=\ \EW\left[ \int_{[0,\infty)} 
		\frac{\partial}{\partial c}\rho_t\left(
				 C^{L}_t\right) (C_t-\uP) \md R_t  \right].\label{Main:78}
	\end{align}
	Here, by boundedness of $C$ and integrability conditions~\eqref{eq:cintegrable}, \eqref{eq:infintegrable} and~\eqref{Main:33}, 
	we were allowed to use in the first 
	equality the projection identity of Proposition~\ref{app:5} and in the second 
	equality the Fubini-like Proposition~\ref{app:1}.
	The preceding estimate together with convexity of $\rho$ yields
	\begin{align}\label{Main:46}
		V(C)&\leq 	\EW\left[ \int_{[0,\infty)} 
		\left\{\frac{\partial}{\partial c}\rho_t\left(
				 C^{L}_t\right) (C_t-\uP)-\rho_t(C_t)\right\} \md R_t \right]\\
		&\leq 	\EW\left[ \int_{[0,\infty)} 
		\left\{\frac{\partial}{\partial c}\rho_t\left(
				 C^{L}_t\right) (C^{L}_t-\uP)-\rho_t(C^{L}_t)\right\} \md R_t \right].
	\end{align}			 
	Using Proposition~\ref{Main:38}, we thus have found an upper bound on the value $V(C)$ generated by an arbitrary admissible control $C\in \Cm$. 
	
	It remains to prove that $C^L$ satisfies the admissibility condition~\eqref{Main:9} and the identity in~\eqref{Main:22}. 
	For admissibility note first that
	\begin{align}\label{Main:100}
	    \int_{[0,t]} \left|\frac{\partial}{\partial c}\rho_t\left(C_t^L
		\right)\right| \mdS C_s^L=\left|\frac{\partial}{\partial c}\rho_t\left(C_t^L\right)\right|(C_t^L-c_0)\
		\in \mathrm{L}^1(\WM\otimes \md R).
	\end{align}
    Thus, we can apply Fubini's theorem for $\mdS$-Integrals (see Proposition~\ref{app:1}) to deduce that 
	\begin{align}\label{Main:145}
	    \EW\left[\int_{[0,\infty)}\int_{[s,\infty)} \left|\frac{\partial}{\partial c}\rho_t\left(C_t^L
		\right)\right| \md R_t \mdS C_s^L\right]<\infty.
	\end{align}
	It follows that
		\begin{align}
	    \infty&>\EW\left[\int_{[0,\infty)}\int_{[s,\infty)} \frac{\partial}{\partial c}\rho_t\left(C_t^L
		\right)\vee 0\, \md R_t \mdS C_s^L\right]\\
		&\geq \EW\left[\int_{[0,\infty)}{^\Lambda\left( \int_{[s,\infty)} \frac{\partial}{\partial c}\rho_t\left(C_t^L
		\right)\vee 0\, \md R_t\right)}  \mdS C_s^L\right]\\
		&\geq  \EW\left[\int_{[0,\infty)}{^\Lambda \left(\int_{[s,\infty)} \frac{\partial}{\partial c}\rho_t\left(\sup_{v\in [s,t]}L_v
		\right)\md R_t\right)}  \vee 0 \, \mdS C_s^L\right]
		\overset{\eqref{Main:13}}{=} \EW\left[\int_{[0,\infty)}{^\Lambda P_s} \vee 0\, \mdS C_s^L\right],
	\end{align}
	where we have used Proposition \ref{app:5} in the first estimate.
	Hence,
	\begin{align}
	    \EW\left[\int_{[0,\infty)}{^\Lambda P_s} \vee 0\, \mdS C_s^L\right]< \infty,
	\end{align}
	and so we can use monotone convergence to obtain analogously to \eqref{Main:78} that
	\begin{align*}
	    	\EW \left[ \int_{[0,\infty)} |{^\Lambda P}_s| \mdS C_s^L\right]
	    	&= \lim_{n\rightarrow \infty} \EW \left[ \int_{[0,\infty)} |{^\Lambda P}_s| \mdS (C_s^L\wedge n)\right]\\
		&\leq   \EW\left[ \int_{[0,\infty)} 
		\left|\frac{\partial}{\partial c}\rho_t\left(
				 C^{L}_t\right) (C^L_t-\uP)\right| \md R_t  \right]<\infty,
	\end{align*}
	which shows that $C^L$ is admissible. 
	
	Clearly, for the identity in \eqref{Main:22}, we now only have to show equality in~\eqref{Main:21} for $C=C^L$, i.e.
	\begin{align}\label{Main:26}
		\EW &\left[ \int_{[0,\infty)} {^\Lambda\left(\int_{[\cdot,\infty)}
		\frac{\partial}{\partial c}\rho_t\left(
				 \sup_{v\in [\cdot,t]}L_v\right) \md R_t\right)}_s \mdS C_s^L\right]\\
		&\hspace{15ex}=  \EW\left[ \int_{[0,\infty)} {^\Lambda\left(\int_{[\cdot,\infty)}
		\frac{\partial}{\partial c}\rho_t\left(\uP\vee \sup_{v\in [0,t]}L_v
		\right) \md R_t\right)}_s \mdS C_s^L\right].
	\end{align}
	To this end, note first that the latter expectation is well defined due to~\eqref{Main:100} and~\eqref{Main:145}.
	
	Let us split the $\mdS C^L$-integral in \eqref{Main:26} into the contributions coming from $\rdc{(C^L)}$ and from $\ldc{(C^L)}$ (see \eqref{Main:143}).
		For the $\rdc{(C^L)}$-contribution (see~\eqref{Main:143}) to \eqref{Main:26} we obtain by \citing{KS98}{Proposition 2.26}{10}, that there exists
	a sequence $(T_n)_{n\in \NZ}$ of $\mathcal{F}$-stopping times with disjoint graphs (exactly as in the proof of Proposition~\ref{Main:307})
	which exhaust the jumps of $\rdc{(C^L)}$. Hence, we obtain 
	\begin{align}
		 & \int_{[0,\infty)} {^\Lambda\left(\int_{[\cdot,\infty)}
		\frac{\partial}{\partial c}\rho_t\left(
				 \sup_{v\in [\cdot,t]}L_v\right) \md R_t\right)}_s^\ast \md \rdc{(C^L)}_s\\
		&\hspace{15ex}=  \int_{[0,\infty)} {^\Lambda\left(\int_{[\cdot,\infty)}
		\frac{\partial}{\partial c}\rho_t\left(\uP \vee\sup_{v\in [0,t]}L_v
		\right) \md R_t\right)}_s^\ast \md \rdc{(C^L)}_s,\nonumber
	\end{align}
	by application of Lemma~\ref{Main:72} to
	$T_n$ for $n\in \NZ$.	
	We will argue at the end that the contribution from $\ldc{(C^L)}$ satisfies
	\begin{align}\label{Main:23}
		& \int_{[0,\infty)} \int_{[s,\infty)}
		\frac{\partial}{\partial c}\rho_t\left(
				 \sup_{v\in [s,t]}L_v\right) \md R_t \md \ldc{(C^L)}_s\\
		&\hspace{20ex}=   \int_{[0,\infty)} \int_{[s,\infty)}
		\frac{\partial}{\partial c}\rho_t\left(\uP \vee \sup_{v\in [0,t]}L_v
		\right) \md R_t \md \ldc{(C^L)}_s.\nonumber
	\end{align}
	Granted this identity, we get by Proposition~\ref{app:5} and~\eqref{Main:145} that we can
	drop the projections in~\eqref{Main:26}, which completes our proof of identity~\eqref{Main:26}.

	For the proof of~\eqref{Main:23}, we fix $\omega\in \Omega$. We will show the result separately for the left jumps of $C$ and for its continuous part $C^c$ (see~\eqref{Main:143}).
	 First, we get for $s\in [0,\infty)$ with
	$C_{s-}^{L}(\omega)<C_s^{L}(\omega)$ that $L_s(\omega)
	>c_0\vee\sup_{v\in [0,s)} L_v(\omega)$ and therefore
	$\sup_{v\in [s,t]} L_v(\omega)=\uP \vee\sup_{v\in [0,t]} L_v(\omega)$
	for $t\geq s$.
	For the $\md C^c$-contribution we can restrict to points $s\in [0,\infty)$ such that $\Delta R_s(\omega)=0$ as for fixed $\omega$ the process $R$ can have only countably many jumps. Moreover, the measure $\md C^c(\omega)$ is supported by the set $\{s\in [0,\infty)\,|\,C_t^c(\omega)>C_s^c(\omega) \text{ for all } t>s\}$.
	So, let $s\in [0,\infty)$ such that $\Delta R_s(\omega)=0$ and for all $t>s$ we have $C_s^{c}(\omega)<C_t^{c}(\omega)$.
	Then we get for any $t>s$ that $\sup_{v\in (s,t]} L_v(\omega)
	>c_0\vee \sup_{v\in [0,s]} L_v(\omega)$ and therefore again
	$\sup_{v\in [s,t]} L_v(\omega)=\uP\vee \sup_{v\in [0,t]} L_v(\omega)$. Hence,
	\begin{align}
	     \int_{(s,\infty)}\left|\frac{\partial}{\partial c}\rho_t\left(\omega,\sup_{v\in [s,t]}L_v(\omega)\right)\right|\md R_t(\omega)
	    =\, \int_{(s,\infty)}\left|\frac{\partial}{\partial c}\rho_t\left(\omega,C_t^L(\omega)\right)\right|\md R_t(\omega)
	\end{align}
	and the rest follows because $\Delta R_s(\omega)=0$.
\hfill $\square$

\section{Optimal irreversible investment in a compound Poisson setting with jump sensor}\label{sec:Poisson}

In this section we will illustrate in a compound
    Poisson process framework how different
    Meyer-$\sigma$-fields lead to different optimal
    controls in an irreversible investment problem that
    can be solved explicitly using Theorem~\ref{Main:14}. Specifically, let us fix a probability
    space $(\Omega,\mathbb{F},\WM)$ with a compound
    Poisson process $\tilde{P}$  starting in $\tilde{p}$
    of the form, i.e., 
\begin{align}
	\tilde{P}_t=\tilde{p}+\sum_{k=1}^{N_t}Y_k,	\quad P_t:=\mathrm{e}^{-rt} \tilde{P}_t,\quad t\in [0,\infty),
	\quad P_\infty:=\Tilde{P}_\infty:=0,
\end{align}
where $\tilde{p}\in \RZ$, $r>0$, $N$ is a Poisson process with intensity $\lambda>0$, independent of
  the i.i.d.\ sequence of $(Y_k)_{k\in \mathbb{N}}\subset \mathrm{L}^2(\WM)$ with mean $m:=\EW[Y_1]\in \RZ$ and $\WM(Y_1=0)=0$. Let
$\mathcal{F}:=(\mathcal{F}_t)_{t\geq 0}$ be the $\WM$-augmented filtration generated by $\tilde{P}$.
Define the risk clock 
\[
	R_t:=\int_{(0,t]} \mathrm{e}^{-rs} 
\mathrm{d} N_s, \quad t\in [0,\infty),
\]
and, furthermore, choose $\rho_t(c):=\frac12 c^2$,
so that obviously $\frac{\partial}{\partial c} \rho_t(c)=c$, $c\in \RZ$.
One can readily check that Assumption~\ref{Main:66} is satisfied. 

We want to allow immediate reactions by our controller only for sufficiently large jumps and thus restrict controls to be  $\Lambda^\eta$-measurable, 
where
\begin{align}\label{Main:92}
	\Lambda^\eta := 
	{^\WM \sigma\left(Z \text{ c\`adl\`ag and $\tilde{\FA}^\eta$-measurable}\right)}
\end{align}
for a fixed sensitivity threshold $\eta \in [0,\infty]$, where $\tilde{\FA}^\eta$ is generated by
$\tilde{P}^\eta:=\tilde{P}_-+\Delta\tilde{P}\mathbb{1}_{\{|\Delta \tilde{P}|\geq \eta\}}$
 and $\WM$ symbolizes that we consider the $\WM$-completion of the $\sigma$-field at hand. The probability with which the controller's sensor fails to alert is thus
\begin{align}\label{Main:91}
	    p(\eta):=\WM(|Y_1|< \eta) \in [0,1].    
	\end{align}
The optimization problem~\eqref{Main:2} now takes the form
 \begin{align}\label{Main:8}
	        \sup_{C\in \Cm}\EW\left[\int_{[0,\infty)} {^{\Lambda^\eta} P}_t \mdS C_t
		-\frac12 \int_{[0,\infty)} C_t^2\md R_t\right].
\end{align}
In this problem, our controller is confronted with a reward process $P$ that will jump due to external shocks $(Y_k)_{k \in \mathbb{N}}$ hitting at exponential times that also trigger the risk assessments in the clock $R$. For large enough shocks (when $|Y_k|\geq \eta$) the controller receives a warning signal from a sensor that affords her the opportunity to adjust the control $C$ before the risk assessment is done; for smaller shocks, though, the controller receives no such signal and can only react after they have struck.

We want to construct an optimal control for problem~\eqref{Main:8} via Theorem~\ref{Main:14}. For that we need that the ${\Lambda^\eta}$-projection of $P$ satisfies Assumption~\ref{Main:6}. This will not hold true in the optional case $p(\eta)=0$ which we will cover separately in Section~\ref{sec:optional} below. For a fallible sensor, i.e.~in the case $p(\eta)>0$, the following proposition characterizes the ${\Lambda^\eta}$-projection and shows that Theorem~\ref{Main:14} will indeed lead to an optimal control. Moreover, it shows that the maximal solution $L^{\Lambda^\eta}$ to~\eqref{Main:33} is a function of the reward process and the sensor:

\begin{Lem}\label{Main:11}
\begin{enumerate}[label=(\roman*)]
    \item 
   For all $\eta\in [0,\infty]$ we have at every $\Lambda^\eta$-stopping time $T$ that
   \begin{align}\label{Main:77}
        \Delta N_T&=
                \mathbb{1}_{\{|Y_{N_T}|\geq \eta\}\cap \{\Delta N_T>0\}}, \quad \text{ $\WM$-a.s.},\\
       {^{\Lambda^\eta} \tilde{P}_T}&=\tilde{P}_T,
                \quad\text{ $\WM$-a.s.},\label{Main:55}
   \end{align}
   and we have $\WM$-almost surely that
		\begin{align}\label{Main:35}
		{^{\Lambda^\eta}(\Delta N)}_t&=\mathbb{1}_{\{|Y_{N_t}|\geq \eta\}\cap \{\Delta N_t>0\}},\\\label{Main:83}
		    {^{\Lambda^\eta} P}_t= {^{\Lambda^\eta} \tilde{P}}_t \mathrm{e}^{-rt}&=\tilde{P}^\eta_t\mathrm{e}^{-rt}
		    :=\left(\tilde{P}_{t-}+\Delta  \tilde{P}_t\mathbb{1}_{\{|\Delta  \tilde{P}_t| \geq \eta\}}\right)\mathrm{e}^{-rt},\quad t\in [0,\infty).
		\end{align}
		In the case $p(\eta)=0$ (resp.\  $p(\eta)=1$), we have $\Lambda^\eta=\mathcal{O}(\FA)$ (resp.\  $\Lambda^\eta=\mathcal{P}(\FA)$). 
	\item If the sensor is imperfect, i.e.~if
	$p(\eta)>0$ (see~\eqref{Main:91}),
	then ${^{\Lambda^\eta}P}$ satisfies Assumption~\ref{Main:6}. The maximal solution $L^{\Lambda^\eta}$ to~\eqref{Main:33} and~\eqref{Main:13} is given by
	\begin{align}\label{Main:34}
		    L_t^{\Lambda^\eta}=\ell^\eta({^{\Lambda^\eta} \tilde{P}_t},{^{\Lambda^\eta}(\Delta  N)}_t)
		    =\begin{cases}
		        \ell^\eta(\tilde{P}_{t-},0),& |\Delta P_t|\geq \eta,\\
		        \ell^\eta(\tilde{P}_t,1),& |\Delta P_t|< \eta,
		    \end{cases}
		    \quad  t\in [0,\infty),
		\end{align}
	where, for $p\in \RZ$, $\Delta \in \{0,1\}$,
	\begin{align}\label{Main:126}
        \ell^\eta(p,\Delta):=\inf_{0<T\in \stme}\ell_T(p,\Delta)>-\infty
    \end{align}
	and, for random times $T>0$,
    \begin{align}\label{Main:121}
        \ell_T(p,\Delta):=
       \frac{\left(1-
    			\EW\left[\mathrm{e}^{-rT}\right]\right)p
    			-\EW\left[\mathrm{e}^{-rT}	\sum_{k=1}^{N_T}Y_k \right]}{\EW\left[R_{T-}
    			\right]+\Delta}.
    \end{align}
	  with the convention $\frac{\cdot}{0}=\infty$. Moreover, the corresponding control $C^{L^{\Lambda^\eta}}$  from~\eqref{Main:48} satisfies~\eqref{Main:81} and it is optimal with a finite value $V(C^{L^{\Lambda^\eta}})<\infty$.
	\end{enumerate}
\end{Lem}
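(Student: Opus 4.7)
The proof naturally decomposes along the two parts of the lemma. For part~(i), the cornerstone is~\eqref{Main:77}: by Definition and Theorem~\ref{Main:138}, each $\FA$-stopping time $T_k:=\inf\{t\geq 0: N_t=k\}$ decomposes as $(T_k)_{A_k^\eta}\wedge(T_k)_{(A_k^\eta)^c}$ with $A_k^\eta := \{|Y_k|\geq \eta\}$, and the restriction $(T_k)_{(A_k^\eta)^c}$ is totally $\Lambda^\eta$-inaccessible because small-jump times leave no trace in $\tilde{\FA}^\eta$; hence no $T\in\stme$ can coincide with $T_k$ on $\{|Y_k|<\eta\}$ with positive probability. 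Identity~\eqref{Main:55} then follows since $\tilde{P}_T = \tilde{P}_{T-} + Y_{N_T}\mathbb{1}_{\{\Delta N_T>0\}}$ agrees almost surely with the $\Lambda^\eta$-measurable $\tilde{P}^\eta_T$ at such $T$. The process-level identities~\eqref{Main:35} and~\eqref{Main:83} then follow from Corollary~\ref{Main:68} combined with Definition and Theorem~\ref{Main:154}, as their right-hand sides are $\Lambda^\eta$-measurable by construction and agree with the corresponding $\Lambda^\eta$-projection at every bounded $\Lambda^\eta$-stopping time. The cases $p(\eta)\in\{0,1\}$ reduce to $\tilde{P}^\eta\equiv\tilde{P}$ respectively $\tilde{P}^\eta\equiv\tilde{P}_-$ almost surely, and the example following Definition and Theorem~\ref{Main:107} identifies the completions with $\mathcal{O}(\FA)$ and $\mathcal{P}(\FA)$.

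For part~(ii), I would first verify Assumption~\ref{Main:6}: class~$(D^{\Lambda^\eta})$ for ${^{\Lambda^\eta}P}$ follows from the $\mathrm{L}^1$-bound $|{^{\Lambda^\eta}P}_T|\leq |\tilde{p}|+\sum_{k\geq 1}|Y_k|\mathrm{e}^{-rT_k}$ whose expectation equals the finite quantity $|\tilde{p}|+\EW[|Y_1|]\lambda/r$; left-upper-semicontinuity at a predictable stopping time $S$ follows from quasi-left-continuity of the L\'evy process $\tilde{P}$ (so that $\Delta\tilde{P}_S=0$ a.s.) combined with dominated convergence against the above majorant; $\md R$-right-upper-semicontinuity follows because $\md R([S,S_n))\to 0$ forces $S$ and $S_n$ to lie in the same inter-jump interval of $N$, so only the discount factor and a possible large jump at $S_n$ contribute, both controlled by direct computation using the independence of the jumps from the past. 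The identification of $L^{\Lambda^\eta}$ then exploits the linearization $\rho'_t(c)=c$, which turns~\eqref{Main:25} into $\EW[P_S-P_T\midG\aFAe_S] = \ell^{\Lambda^\eta}_{S,T}\,\EW[R_{T-}-R_{S-}\midG\aFAe_S]$. Applying the strong Markov property at $S\in\stme$ and pulling $\mathrm{e}^{-rS}$ out of numerator and denominator reduces the ratio to $\ell_{T-S}(\tilde{P}_S,\Delta N_S)$ in the notation of~\eqref{Main:121}, with the atom $\mathrm{e}^{-rS}\Delta N_S$ in the denominator being $\Lambda^\eta$-measurable by part~(i). Taking the essinf in~\eqref{Main:24} and using the independence of the shifted process from $\aFAe_S$ gives $L^{\Lambda^\eta}_S = \ell^\eta(\tilde{P}_S,\Delta N_S)$, and rewriting via~\eqref{Main:83} and~\eqref{Main:35} produces~\eqref{Main:34}; finiteness $\ell^\eta(p,\Delta) > -\infty$ follows by testing~\eqref{Main:126} against any single deterministic $T>0$.

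Finally, condition~\eqref{Main:81} for $C^{L^{\Lambda^\eta}}$ reduces, via $\rho'_t(c)=c$, to an $\mathrm{L}^2(\WM\otimes \md R)$-estimate on $\uP\vee \sup_{v\leq t}L^{\Lambda^\eta}_v$, which follows from the explicit formula~\eqref{Main:121} combined with the $\mathrm{L}^2$-moment assumption on $Y_1$; Theorem~\ref{Main:14} then delivers the claimed optimality with $V(C^{L^{\Lambda^\eta}}) < \infty$. The main obstacle I anticipate is the careful bookkeeping in the strong Markov computation for $\ell^{\Lambda^\eta}_{S,T}$: one must disentangle whether or not $S$ itself contributes an atom $\mathrm{e}^{-rS}$ to $R_{T-}-R_{S-}$, which by~\eqref{Main:77} happens precisely when $\Delta N_S=1$ and is captured by $\Delta$ in~\eqref{Main:121}; one must also verify that, after this factoring, the ratio depends on $\aFAe_S$ only through the $\aFAe_S$-measurable pair $(\tilde{P}_S,\Delta N_S)$, which is what enables the pointwise identification~\eqref{Main:34} of the essinf as a deterministic function of these two quantities.
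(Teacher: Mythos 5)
Your outline follows the paper's broad strategy, but the heart of part~(ii) is missing. The identification of $L^{\Lambda^\eta}$ with $\ell^\eta({^{\Lambda^\eta}\tilde P},{^{\Lambda^\eta}(\Delta N)})$ is asserted via ``strong Markov property and independence of the shifted process from $\aFAe_S$'', yet this is exactly where the real work lies. The essential infimum in~\eqref{Main:24} runs over \emph{all} $\Lambda^\eta$-stopping times $T>S$, which may use $\aFAe_S$-information, and the quantities $\ell^{\Lambda^\eta}_{S,T}$ are conditional expectations; to replace this by the unconditional infimum~\eqref{Main:126} over $\stme$ one needs, in both directions, a structure theorem for stopping times of the sensor filtration after time $S$: every such $T$ must be written as $S+U(\omega,\theta_S\omega)$ with $\omega'\mapsto U(\omega,\omega')$ again a stopping time of $\tilde{\FA}^\eta$, and conversely every such concatenation must again be one; moreover one must know that $\Lambda^\eta$-stopping times agree up to null sets with $\tilde{\FA}^\eta$-stopping times. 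The paper establishes precisely this on the canonical path space via the Galmarino-type results of \cite{CP65} and an adaptation of Dellacherie--Meyer's Theorem~97 to the Meyer-$\sigma$-field generated by $\tilde{\FA}^\eta$ (through a mixed killing/stopping operator), and only then performs the strong-Markov computation, separately for ``$\leq$'' and ``$\geq$''. Without that input, ``taking the essinf and using independence'' is an assertion, not a proof --- in particular the ``$\geq$'' direction, where an arbitrary $T\in\stme$, $T>S$, entangled with $\aFAe_S$, must be dominated by the deterministic infimum. Two further slips in this step: finiteness $\ell^\eta(p,\Delta)>-\infty$ does \emph{not} follow by testing~\eqref{Main:126} at one deterministic time (that bounds the infimum from above, not below); the paper derives it by contradiction with the representation property~\eqref{Main:13}. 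And to upgrade the identity at stopping times to the process identity~\eqref{Main:34} via Corollary~\ref{Main:68} you need measurability of $\ell^\eta(\cdot,\Delta)$, which you never address (the paper gets it from concavity).

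Your verification of Assumption~\ref{Main:6}(iii) is also not sound as stated. Claiming that ``a possible large jump at $S_n$'' is ``controlled by direct computation'' cannot work: a detected jump at $S_n$ raises $\EW[P_{S_n}]$ above $\EW[P_S]$ on average, and this is exactly what destroys (iii) in the optional case (Proposition~\ref{Main:42}(ii)); your argument, as written, would ``prove'' (iii) also for $p(\eta)=0$, where it is false. The role of $p(\eta)>0$ is different: for $\Lambda^\eta$-stopping times $T>S$ one has $\WM(\md R([S,T))>0\mid\aFAe_S)>0$ on $\{T>S\}$, since a strictly later $\Lambda^\eta$-stopping time cannot be guaranteed to precede an undetected small jump; hence any sequence with $\md R([S,S_n))\to 0$ a.s.\ must converge to $S$, and then right-continuity of $P$ together with an integrable majorant yields (iii). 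Part~(i) and the integrability check~\eqref{Main:81} are fine and essentially the paper's argument.
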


\begin{proof}
\emph{Proof of (i).} 
    First, it is well known that ${^{\mathcal{P}} N}=N_-$ up to indistinguishability.
	    Next for $T\in \stme$ and $A:=\{|\Delta  \tilde{P}_T|\geq \eta\}=\{|Y_{N_T}|\geq \eta\}\cap \{\Delta N_T>0\}$ one can see that the $\FA$-stopping time $T_A$ is a $\Lambda^\eta$-stopping time and $T_{A^c}$ is $\Lambda^\eta$-totally inaccessible (see Definition~\ref{Main:138}).  Hence, we get by Proposition~\ref{Main:165} that
    \begin{align}\label{Main:175}
        {^{\Lambda^\eta} \tilde{P}_T}=\tilde{P}_{T}\mathbb{1}_A+{^\mathcal{P}\tilde{P}_{T}}\mathbb{1}_{A^c}
        =\Delta \tilde{P}_T\mathbb{1}_{\{|\Delta \tilde{P}_T|\geq \eta\}}+\tilde{P}_{T-},
    \end{align}
    which implies~\eqref{Main:83} by Corollary~\ref{Main:68} of the Meyer-Section Theorem. Moreover, from ${^\mathcal{P} N}=N_-$ we get
    \[
        {^\mathcal{P}\tilde{P}_{T}}\mathbb{1}_{A^c}=
        {\tilde{P}_{T-}}\mathbb{1}_{A^c}
        =\left(\tilde{p}+\sum_{k=1}^{N_{T-}}Y_k\right)\mathbb{1}_{A^c}
        =\left(\tilde{p}+\sum_{k=1}^{N_{T}}Y_k\right)\mathbb{1}_{A^c}
        =\tilde{P}_T\mathbb{1}_{A^c},
    \]
    which shows in combination with~\eqref{Main:175} equation~\eqref{Main:55}.
    The same argument also shows~\eqref{Main:77}, which then implies again by Corollary~\ref{Main:68} equation~\eqref{Main:35}.
    
    \emph{Case $p(\eta)=0$.} In this case we have for any $\FA$-stopping time $T$ that $\tilde{P}_T=\tilde{P}^\eta_T$, almost surely, which implies, by a corollary of the Meyer Section Theorem (see Corollary~\ref{Main:68}), that $\tilde{P}$ and $\tilde{P}^\eta$ are indistinguishable. Hence, by Definition and Theorem~\ref{Main:107} the process $\tilde{P}$ is $\Lambda^\eta$-measurable.
	   Moreover, using \citing{DM78}{Theorem 97 (a)}{147}, we get
	   \begin{align}
	       \mathcal{O}(\tilde{\FA})=\mathcal{P}(\tilde{\FA})\vee \sigma(\tilde{P})\subset \Lambda^\eta \subset \mathcal{O}(\FA).
	   \end{align}
        This finishes our proof, as by \citing{EL80}{Example 1\hspace{-0.01cm}\raisebox{0.1cm}{$\circ$})}{509}, the $\WM$-completion of an optional $\sigma$-field with respect to a right-continuous filtration is the optional $\sigma$-field with respect to the augmented filtration.
        
        \emph{Case $p(\eta)=1$.} In this case we have for any $\FA$-stopping time $T$ that $\tilde{P}_{T-}=\tilde{P}^\eta_T$, almost surely, which implies, by a corollary of the Meyer Section Theorem (see Corollary~\ref{Main:68}), that $\tilde{P}_-$ and $\tilde{P}^\eta$ are indistinguishable. Hence, by Definition and Theorem~\ref{Main:107} the process $\tilde{P}^\eta$ is $\mathcal{P}(\FA)$-measurable.
       Next, one can derive analogously to \citing{DM78}{Theorem 97 (a)}{147}, that
       $\tilde{\Lambda}^\eta=\mathcal{P}(\tilde{\FA})\vee\sigma(\tilde{P}^\eta)$.
        As $\tilde{P}^\eta$ is $\mathcal{P}(\FA)$-measurable we obtain 
        \[
       \mathcal{P}(\tilde{\FA})\subset \tilde{\Lambda}^\eta=\mathcal{P}(\tilde{\FA})\vee\sigma(\tilde{P}^\eta)\subset \mathcal{P}(\FA).
        \]
       The rest follows now by \citing{EL80}{Example 2\hspace{-0.01cm}\raisebox{0.1cm}{$\circ$})}{509}, which gives us that the $\WM$-completion of $\mathcal{P}(\tilde{\FA})$ is given by $\mathcal{P}(\FA)$.
    
    \emph{Proof of (ii):} ${^{\Lambda^\eta} P}$ obviously satisfies (i) of Assumption~\ref{Main:6}. Property (ii) of this assumption holds by Fatou's lemma via $^\mathcal{P}N=N_-$. Finally, for (iii) of Assumption~\ref{Main:6} note that for  $\Lambda^\eta$-stopping times $S,T$ we have
	   $\WM(\md R([S,T))>0|\aFAe_S)>0$ on $\{T>S\}$ by~\eqref{Main:91}. Hence, any sequence of $\Lambda^\eta$-stopping times as considered in
        condition (iii) must decrease to $S$ almost surely and therefore Assumption
	  ~\ref{Main:6} is satisfied by right-continuity of $P$ and Fatou's Lemma. Now, the process $L^{\Lambda^\eta}$ exists by Lemma~\ref{Main:19} and $C^{L^{\Lambda^\eta}}$ is optimal by Theorem~\ref{Main:14} since it satisfies the integrability conditions as verified next:
	    
	    \emph{\eqref{Main:81} is satisfied.}  As $L^{\Lambda^\eta}$ satisfies~\eqref{Main:24} we see that
	    \[
	        L^{\Lambda^\eta}_S\leq \ell^{\Lambda^\eta}_{S, \infty} =\frac{{^{\Lambda^\eta} \tilde{P}_S}}{\EW[R_{\infty-}]+{^{\Lambda^\eta} (\Delta N)_S}}\leq
	        \frac{|{^{\Lambda^\eta} \tilde{P}_S}|}{\EW[R_{\infty-}]}=\frac{r}{\lambda}|{\tilde{P}^\eta_S}|, \quad S\in \stme,
	    \]
	    and, hence, \eqref{Main:81} follows by $Y_1\in \mathrm{L}^2(\mathbb{P})$ and $\EW[R_{\infty-}]=\frac{\lambda}{r}<\infty$.
	   
	   \emph{Equation~\eqref{Main:34}.}
	   We claim that it is enough to show
\begin{align}\label{Main:31}
	            L_S^{\Lambda^\eta}=\inf_{0< T\in \stme}
	            \frac{\left(1-
    			\EW\left[\mathrm{e}^{-rT}\right]\right){^{\Lambda^\eta} \tilde{P}_S}
    			-\EW\left[\mathrm{e}^{-rT}	\sum_{k=1}^{N_T}Y_k \right]}{\EW\left[R_{T-}
    			\right]+ {^{\Lambda^\eta} (\Delta N)}_S},\quad S\in \stme.
	        \end{align}
	  Indeed, by~\eqref{Main:31} and Corollary~\ref{Main:68} it then suffices to establish that 
		the right hand side of~\eqref{Main:34} defines a ${\Lambda^\eta}$-measurable process or, equivalently, that $\ell^\eta$ is measurable. As $\Delta$ admits only two values we only have to argue why $p\mapsto \ell^\eta(p,\Delta)$ is measurable for fixed $\Delta$, which is clear as it is concave. Moreover, if for fixed $\Delta\in \{0,1\}$ we have $\ell^\eta(\bar{p},\Delta)=-\infty$ for some $\bar{p}\in \RZ$, then also $\ell^\eta(p,\Delta)=-\infty$ for all $p\leq \bar{p}$. By~\eqref{Main:31} this would then contradict the representation property~\eqref{Main:13} of $L^{\Lambda^\eta}$. It follows that indeed $\ell^\eta(p,\Delta)>-\infty$ for all $\Delta\in \{0,1\}$ and $p\in \RZ$.
	        
\emph{Proof of~\eqref{Main:31}.} Let $S\in \stme$. By~\eqref{Main:24}, we know that
	\[
	    L^{\Lambda^\eta}_S=\essinf_{T>S,\, T\in \stme} \ell^{\Lambda^\eta}_{S,T}.
	\]
	Due to our convention $\frac{\cdot}{0}=\infty$, we can write
	\begin{align}\label{Main:500}
	    \ell^{\Lambda^\eta}_{S,T}=\frac{\left(1-
	\EW\left[\mathrm{e}^{-r(T-S)}\midG \aFAe_S\right]\right){^{\Lambda^\eta} \tilde{P}_S}
	-\EW\left[\left(\tilde{P}_T-\tilde{P}_S\right)\mathrm{e}^{-r(T-S)}
	\midG \aFAe_S\right]}{\mathrm{e}^{rS} \EW\left[R_{T-}-R_S\midG \aFAe_S
	\right]+{^{\Lambda^\eta}(\Delta  N)_S}}. 
	\end{align}
	 We will argue next how to reduce the analysis to the
        special case $S=0$ which will become possible by results of
        \cite{DM78}, p.145-149 and \cite{CP65} on the general
        theory of processes when working on the canonical
        space with lifetime.  For this, we now assume, without
        loss of generality, that $\Omega$ is the space of
        $\RZ\cup\{\cem\}$-valued c\`adl\`ag paths with
        lifetime (see \citing{DM78}{Definition 94}{145}). We let $\tilde{P}$ denote the canonical process $\tilde{P}_t(\omega)=\omega(t)$ with its natural filtration $\tilde{\FA}$; $\WM$ is the probability under which $\tilde{P}$ follows the same compound Poisson process dynamics considered above. We define, the process $X^\eta:\Omega\times [0,\infty)\rightarrow \RZ$ by
	\begin{align*}
	    X^\eta_{0}(\omega)=\omega_0, \;X^\eta_t(\omega) := \omega_{t-}+\Delta\omega_t \mathbb{1}_{\{|\Delta\omega_t|\geq \eta\}}, \quad \omega \in \Omega, \; t \in (0,\infty).
	\end{align*}
	Here, $\cem$ denotes some point isolated from $\RZ$, $\omega_\cem$ is the element of $\Omega$ with $X(\omega_\cem)\equiv\cem$ and $\mathcal{B}_{\cem}$ the $\sigma$-field on $\RZ\cup\{\cem\}$ generated by $\mathcal{B}(\RZ)$. We have that $\bar{X}^\eta:=(\Omega,\omega_\cem,(X^\eta_t)_{t\in [0,\infty)},\RZ\cup \{\cem\},\mathcal{B}_{\cem},\cem)$ is a \emph{stochastic function with cemetery taking values in $(\RZ,\mathcal{B}(\RZ))$} (see \citing{CP65}{Def. 0.3}{248-249}, and 
	\citing{CP65}{A 1.3}{267-268}). Moreover, one can check that $\bar{X}^\eta$ satisfies the \emph{linking property} (see \citing{CP65}{Definition 5.1}{263}). As a consequence, we can apply Lemma~4.3 and Theorem~5.3 in \cite{CP65} for $\bar{X}^\eta$, which clarify the structure of $\tilde{\FA}^\eta$-stopping times, where $\tilde{\FA}^\eta$ is the filtration generated by $X^\eta$.
	To obtain results on the stopping times of the Meyer-$\sigma$-field
	\begin{align}\label{Main:71}
	    \tilde{\Lambda}^\eta:=\sigma\left\{Z \text{ is c\`adl\`ag and $\tilde{\FA}^\eta$-adapted}\right\}
	\end{align}
	we adapt \citing{DM78}{Theorem 97}{147}, which is only stated for the predictable and optional-$\sigma$-field. This adaptation can be done via the mapping
	\[
	    (h_t^\eta(\omega))_s:=(\kappa_t(\omega))_s\mathbb{1}_{\{|\Delta_t \omega|<\eta\}}+(\alpha_t(\omega))_s\mathbb{1}_{\{|\Delta_t \omega|\geq \eta\}},\quad \omega\in \Omega, \quad s,t\in[0,\infty),
	\]
	where $\kappa$ denotes the killing operator and $\alpha$ the stopping operator of \citing{DM78}{Definition 95}{146}.
	For this operator one can show that $(\omega,t)\mapsto h_t^\eta(\omega)$ is $\tilde{\Lambda}^\eta$-measurable and $\tilde{\FA}_t^\eta=(h_t^\eta)^{-1}(\tilde{\FA}_\infty)$, which then implies as in the proof of  \citing{DM78}{Theorem 97}{147}, that a process is $\tilde{\Lambda}^\eta$-measurable, if and only if it is $\tilde{\FA}^\eta$-adapted. In particular, for $S:\Omega\rightarrow [0,\infty]$ we have
	\begin{align}\label{Main:28}
	    \text{$S$ is a $\tilde{\Lambda}^\eta$-stopping time if and only if it is an $\tilde{\FA}^\eta$-stopping time.}
	\end{align}
	Finally, recall that 
	 $\Lambda^\eta$ (see~\eqref{Main:92}) is the $\WM$-completion (see Definition and Theorem~\ref{Main:107}) of $\tilde{\Lambda}^\eta$, i.e.
	\begin{align}\label{Main:52}
	    \Lambda^\eta={^\mathbb{P} \tilde{\Lambda}^\eta}={^\mathbb{P}\left(\sigma\left\{Z \text{ is c\`adl\`ag and $\tilde{\FA}^\eta$-adapted}\right\}\right)}.
	\end{align}
	
	\emph{Proof of ``$\leq$'' in~\eqref{Main:31}:} Let $T\in \stme$ with $T>0$. By \citing{EL80}{Theorem 3}{508},
	and~\eqref{Main:52} we have that $S$ and $T$ are almost surely equal to $\tilde{\Lambda}^\eta$-stopping times $\tilde{S}$, $\tilde{T}$ and, hence,
    $\tilde{\mathcal{F}}^\eta$-stopping times (see~\eqref{Main:28}). Define $U(\omega):=\tilde{T}(\tilde{P}(\omega)-\tilde{P}_0(\omega)+\tilde{p})$, $\omega \in \Omega$. One can readily check (or use \citing{CP65}{Theorem~1.3}{251} to see) that $U(\cdot)$ is an $\tilde{\FA}^\eta$-stopping time.
    Hence,
	\[
	    \hat{T}(\omega):=\tilde{S}(\omega)+U(\theta_{\tilde{S}}(\omega))
	    >\tilde{S}(\omega), \quad \omega \in \Omega,
	\]
	defines an $\tilde{\mathcal{F}}^\eta$-stopping time by \citing{CP65}{Lemma 4.3}{260}. It is
	also a $\tilde{\Lambda}^\eta$-stopping time (see~\eqref{Main:28}) and thus in particular a $\Lambda^\eta$-stopping time. We will argue next that almost surely
	\begin{align}\label{Main:501}
	 \ell^{\Lambda^\eta}_{S,\hat{T}} = \frac{\left(1-
    			\EW\left[\mathrm{e}^{-r\hat{T}}\right]\right){^{\Lambda^\eta} \tilde{P}_S}
    			-\EW\left[\mathrm{e}^{-r\hat{T}}	\sum_{k=1}^{N_{\hat{T}}}Y_k \right]}{\EW\left[R_{\hat{T}-}
    			\right]+ {^{\Lambda^\eta} (\Delta N)}_S}
	\end{align}
	which readily yields ``$\leq$'' in~\eqref{Main:31}.  For~\eqref{Main:501} we need to manipulate the conditional expectations in~\eqref{Main:500}. For the sake of brevity, we will do so only for the most complicated one:
	\begin{align}\label{Main:32}
	 \EW&\left[\left(\tilde{P}_{\hat{T}}-\tilde{P}_S\right)\mathrm{e}^{-r(\hat{T}-S)}
	\midG \aFAe_S\right]=
	\EW\left[\EW\left[\left(\tilde{P}_{\tilde{S}+U(\theta_{\tilde{S}})}-\tilde{P}_{\tilde{S}}\right)\mathrm{e}^{-rU(\theta_{\tilde{S}})}
	\midG \tilde{\FA}_{\tilde{S}}\right]\midG \tilde{\FA}^\eta_{\tilde{S}}\right]\\
	&=   \EW\left[\EW_{\tilde{P}_{\tilde{S}}}\left[(\tilde{P}_{U}-\tilde{P}_0)\mathrm{e}^{-r U}\right]\midG \tilde{\FA}^\eta_{\tilde{S}}\right]
	=\EW\left[\EW_{\tilde{p}}\left[(\tilde{P}_{\tilde{T}}-\tilde{p})\mathrm{e}^{-r\tilde{T}}\right]\midG \tilde{\FA}^\eta_{\tilde{S}}\right]=\EW\left[(\tilde{P}_{T}-\tilde{p})\mathrm{e}^{-rT}\right].
	\end{align}
	Here, we used for the first equality that, by~\eqref{Main:28}, we can replace the $\aFAe$-conditional expectation by an $\tilde{\FA}^\eta_{\tilde{S}}$-conditional expectation and that $\tilde{\FA}^\eta_{\tilde{S}}\subset \tilde{\FA}_{\tilde{S}}$; the second equality is due to the strong Markov property of $\tilde{P}$ with respect to $\tilde{\FA}$; the third equality is due to the L{\'e}vy property of $\tilde{P}$ and the choice of $U$; for the final identity we recall that under $\WM$ the canonical process $\tilde{P}$ starts in $\tilde{p}$ almost surely and $T=\tilde{T}$ almost surely.
	
	\emph{Proof of ``$\geq$'' in~\eqref{Main:31}:} Let $T\in \stme$ with $T>S$. Again by \citing{EL80}{Theorem 3}{508}, and~\eqref{Main:52},
	we have that $S$ and $T$ are almost surely equal to $\tilde{\Lambda}^\eta$-stopping times $\tilde{S}$, $\tilde{T}$ and, hence,
     $\tilde{\mathcal{F}}^\eta$-stopping times (see~\eqref{Main:28}). By \citing{CP65}{Theorem 5.3}{264},
     there thus exists an $\tilde{\FA}^\eta_{\tilde{S}}\otimes
            \tilde{\FA}^\eta_\infty$-measurable mapping
            $\tilde{U}:\Omega\times \Omega\rightarrow
            [0,\infty]$ such that, for every  $\omega\in
            \Omega$,  we have 
	\[
	    \tilde{T}(\omega)=\tilde{S}(\omega)+\tilde{U}(\omega,\theta_{\tilde{S}}(\omega))
	\]
	and also that $\omega'\mapsto \tilde{U}(\omega,\omega')>0$ is an $\tilde{\mathcal{F}}^\eta$-stopping time.
	We have to mention here that \citing{CP65}{Theorem 5.3}{264}, actually assumes that $T$ is an 
	$\tilde{\mathcal{F}}^\eta_+$-stopping time and then $\omega'\mapsto \tilde{U}(\omega,\omega')$ also would be an $\tilde{\mathcal{F}}^\eta_+$-stopping time.
	If $T$ is even an $\tilde{\mathcal{F}}^\eta$-stopping time,
	one can prove along the same lines as given in
                \citing{CP65}{Theorem 5.3}{264}, that
                $\tilde{U}$ can be found such that  $\omega'\mapsto \tilde{U}(\omega,\omega')>0$ is even an $\tilde{\mathcal{F}}^\eta$-stopping time. Now, we need again to manipulate the conditional expectations in~\eqref{Main:500}. For instance, we get that, for $\omega \in \Omega$, 
	\begin{align}
	    \EW\left[\mathrm{e}^{-r(T-S)}\midG \aFAe_{S}\right](\omega)
	    &=\EW\left[\mathrm{e}^{-r(\tilde{T}-\tilde{S})}\midG \tilde{\FA}^\eta_{\tilde{S}}\right](\omega)
	   \\& =\EW\left[\mathrm{e}^{-r\tilde{U}(\cdot,\theta_{\tilde{S}}(\cdot))}\midG \tilde{\FA}^\eta_{\tilde{S}}\right](\omega)
	    =\EW_{\tilde{P}_{\tilde{S}(\omega)}}\left[\mathrm{e}^{-r\tilde{U}(\omega,\cdot)}\right],
	\end{align}
	where we have used that $\tilde{U}$ is $\tilde{\FA}^\eta_{\tilde{S}}$-measurable in the first component and then the Markov property of $\tilde{P}$. Letting $T_\omega$ denote the $\tilde{\FA}^\eta$-stopping time $T_\omega:=\tilde{U}(\omega,\cdot-\tilde{p}+\tilde{P}_{\tilde{S}}(\omega))$, we can use the L{\'e}vy property of $\tilde{P}$ to write the last expectation as $\EW[\mathrm{e}^{-r T_\omega}]$. The other conditional expectations from~\eqref{Main:500} can be treated similarly with the same stopping time $T_\omega$, which by~\eqref{Main:28} is a $\Lambda^\eta$-stopping time.
\end{proof}

 The previous lemma shows that if~\eqref{Main:91} is satisfied, an optimal control to~\eqref{Main:8} exists. Additionally, one can see that for an explicit calculation of such an optimal control it suffices to find the maximal solution $L^{\Lambda^\eta}$ to~\eqref{Main:33} and~\eqref{Main:13} or, equivalently, to find $\ell^\eta$ from~\eqref{Main:126}. In Section~\ref{sec:optional} we show that also in the case $p(\eta)=0$ an optimal control can be constructed, but via a slight change of $P$. 
		
\subsection{Optimal predictable controls}

	We start now with the simplest case and assume that
$\WM(|Y_1|< \eta)=p(\eta)=1$. This corresponds to our controller operating without any sensor and using predictable controls: $\Lambda^\eta=\mathcal{P}=\Lambda^\infty$; see Lemma~\ref{Main:11} (i).
	\begin{Thm}[Optimal predictable control]
	\label{Main:18}
		In the case $p(\eta)=1$, an optimal control for~\eqref{Main:8} is given by
		\[
			C_t^{\mathcal{P}}:=\uP \vee \sup_{v\in [0,t]} 
			 L^{\mathcal{P}}_v,\quad t\in [0,\infty), 
		\]
		with
		\begin{align}\label{Main:12}
			L^{\mathcal{P}}_t
			=a(\tilde{P}_{t-}-b),\quad t\in [0,\infty),
		\end{align}
		where the constants $a,b$ are given by
		\begin{align*}
		a:=\frac{1}{\EW[R_{\infty-}]}=
		\frac{r}{\lambda},\quad
			b:=\sup_{0<T\in \stp} \frac{\EW\left[\mathrm{e}^{-rT}	\sum_{k=1}^{N_T}Y_k \right]}{1-
    			\EW\left[\mathrm{e}^{-rT}\right]}.
		\end{align*}
		In fact, the process $L^{\mathcal{P}}$ from~\eqref{Main:12} is the maximal solution to~\eqref{Main:33}, \eqref{Main:13}.
	\end{Thm}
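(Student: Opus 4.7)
My plan is to derive this theorem as a direct specialization of Lemma~\ref{Main:11}. Since $p(\eta)=1$, Lemma~\ref{Main:11}(i) identifies $\Lambda^\eta$ with the predictable $\sigma$-field $\mathcal{P}(\FA)$, delivers ${^{\mathcal{P}}\tilde{P}}=\tilde{P}_{-}$ via~\eqref{Main:55} (the set $\{|\Delta\tilde{P}|\geq\eta\}$ being evanescent up to $\WM$-nullsets when $p(\eta)=1$), and gives ${^{\mathcal{P}}(\Delta N)}\equiv 0$ via~\eqref{Main:35}. Part~(ii) of that lemma then provides the representation $L^{\mathcal{P}}_t=\ell^\infty(\tilde{P}_{t-},0)$, guarantees that $C^{L^{\mathcal{P}}}$ defined in~\eqref{Main:48} satisfies the integrability condition~\eqref{Main:81}, and concludes via Theorem~\ref{Main:14} that it is optimal. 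The task therefore reduces to evaluating $\ell^\infty(p,0)$ explicitly.

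For fixed $p\in\RZ$ and any $T\in\stp$ with $T>0$, I use quasi-left-continuity of the Poisson process $N$: at every predictable stopping time one has $\Delta N_T=0$ almost surely, so $R_{T-}=R_T$ outside a nullset. Applying the predictable compensation formula with compensator $\lambda\,\md s$ of $N$ yields
\begin{align*}
\EW[R_{T-}]=\EW\left[\int_0^T \mathrm{e}^{-rs}\lambda\,\md s\right]=\frac{\lambda}{r}\bigl(1-\EW[\mathrm{e}^{-rT}]\bigr).
\end{align*}
Substituting this into~\eqref{Main:121} with $\Delta=0$ and factoring the common $(1-\EW[\mathrm{e}^{-rT}])$ out of numerator and denominator gives
\begin{align*}
\ell_T(p,0)=\frac{r}{\lambda}\left(p-\frac{\EW\bigl[\mathrm{e}^{-rT}\sum_{k=1}^{N_T} Y_k\bigr]}{1-\EW[\mathrm{e}^{-rT}]}\right)=a\bigl(p-f(T)\bigr),
\end{align*}
where $f(T)$ denotes the bracketed ratio. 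Taking the infimum over $T\in\stp$, $T>0$, converts to the supremum of $f(T)$, so $\ell^\infty(p,0)=a(p-b)$ with $a$ and $b$ as in the statement; finiteness $b<\infty$ is forced by $\ell^\eta(\,\cdot\,,0)>-\infty$, which Lemma~\ref{Main:11}(ii) already establishes.

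Evaluating at $p=\tilde{P}_{t-}$ yields the explicit formula~\eqref{Main:12} for $L^{\mathcal{P}}$. Maximality and uniqueness as a solution to~\eqref{Main:33}, \eqref{Main:13} are inherited from Lemma~\ref{Main:19} through identity~\eqref{Main:31} of Lemma~\ref{Main:11}(ii), and the asserted optimal control is then $c_0\vee\sup_{v\in[0,t]}L^{\mathcal{P}}_v$ by Theorem~\ref{Main:14}. The only step requiring any real care is the quasi-left-continuity argument eliminating $\Delta R_T$: it leans essentially on predictability of $T$, which is precisely why this clean collapse of the denominator fails in the optional regime that has to be treated separately.
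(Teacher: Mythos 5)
Your proposal is correct and follows essentially the same route as the paper: reduce via Lemma~\ref{Main:11} to computing $\ell^\infty(\cdot,0)$ with ${^{\mathcal{P}}\tilde{P}}=\tilde{P}_-$ and ${^{\mathcal{P}}(\Delta N)}\equiv 0$, then use $\EW[R_{T-}]=\frac{\lambda}{r}\bigl(1-\EW[\mathrm{e}^{-rT}]\bigr)$ for predictable $T$ to collapse $\ell_T(p,0)$ to $a\bigl(p-f(T)\bigr)$ and pass to the infimum. Your explicit quasi-left-continuity and compensation-formula justification is exactly the argument the paper uses implicitly for $\EW[R_{T-}]=\EW[R_T]$ at predictable stopping times.
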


	\begin{proof}
	    By Lemma~\ref{Main:11} we only have to compute $\ell^\infty$.
	    From~\eqref{Main:34} and ${^\mathcal{P} \tilde{P}}=\tilde{P}_-$, ${^\mathcal{P} (\Delta N)}\equiv 0$ (see Lemma~\ref{Main:11} (i)) we get that the maximal solution 
		$L^{\mathcal{P}}$ to~\eqref{Main:33}, \eqref{Main:13} satisfies
    \begin{align}
        L^{\mathcal{P}} &= \inf_{0<T\in \stp}\ell_{T}(\tilde{P}_-,0)
        =\inf_{0<T\in \stp} 
       \frac{1-
    			\EW\left[\mathrm{e}^{-rT}\right]}{\EW\left[R_{T-}
    			\right]}\left(\tilde{P}_--\frac{\EW\left[\mathrm{e}^{-rT}	\sum_{k=1}^{N_T}Y_k \right]}{1-
    			\EW\left[\mathrm{e}^{-rT}\right]}\right)\\
    			&=\inf_{0<T\in \stp} 
      \frac{r}{\lambda}\left(\tilde{P}_--\frac{\EW\left[\mathrm{e}^{-rT}	\sum_{k=1}^{N_T}Y_k \right]}{1-
    			\EW\left[\mathrm{e}^{-rT}\right]}\right)
    	=
      \frac{r}{\lambda}\left(\tilde{P}_--\sup_{0<T\in \stp} \frac{\EW\left[\mathrm{e}^{-rT}	\sum_{k=1}^{N_T}Y_k \right]}{1-
    			\EW\left[\mathrm{e}^{-rT}\right]}\right),
    \end{align}
    where the third equality holds because for any predictable stopping time $T$ one has
    \[
      \EW[R_{T-}]=\EW[R_T]=\lambda\EW\left[\int_{[0,T]}\mathrm{e}^{-rt}\md t\right]=\frac{\lambda}{r}\left(1-\EW\left[\mathrm{e}^{-rT}\right]\right). 
    \]
	\end{proof}
	
	\begin{Pro}\label{Main:89}
	\begin{enumerate}[label=(\roman*)]
	    \item The constant $b$ from
                      Theorem~\ref{Main:18} can also be
                      obtained by taking a supremum over all
                      stopping times and it has an alternative representation in terms of the running supremum over $\tilde{P}_-$:
    \begin{align}\label{Main:88}
        b=\sup_{0<T\in \st}\frac{\EW\left[\mathrm{e}^{-rT}	\sum_{k=1}^{N_T}Y_k \right]}{1-
    			\EW\left[\mathrm{e}^{-rT}\right]}=\frac{\EW\left[\int_{[0,\infty)} \left(\sup_{v\in [0,t]}
					\tilde{P}_{v-}-\tilde{p}\right) \md R_t\right]}{\EW[R_{\infty-}]}.
    \end{align}
    \item For $\eta\in [0,\infty]$ with $p(\eta)>0$ and $\Delta\in \{0,1\}$, we have $\ell^\eta(p,\Delta)<0$ if and only if $p<b$. 
	\end{enumerate}
\end{Pro}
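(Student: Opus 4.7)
The strategy is to prove part~(i) first and deduce~(ii) from the explicit formula~\eqref{Main:126} for $\ell^\eta$ together with~(i). For the second equality in~(i), I would evaluate the Bank-El Karoui representation~\eqref{Main:13} at $S=0$ with $\Lambda=\mathcal{P}$. Using $^{\mathcal{P}}P_{0}=P_0=\tilde{p}$, $\frac{\partial}{\partial c}\rho_t(c)=c$, and $L^{\mathcal{P}}_v=a(\tilde{P}_{v-}-b)$ by~\eqref{Main:12}, substitution yields
\[
\tilde{p} = \EW\!\left[\int_{[0,\infty)} a\!\left(\sup_{v\in[0,t]}\tilde{P}_{v-}-b\right)\md R_t\right] = a\,\EW\!\left[\int_{[0,\infty)}\sup_{v\leq t}\tilde{P}_{v-}\md R_t\right] - ab\,\EW[R_{\infty-}],
\]
and invoking $a\,\EW[R_{\infty-}]=1$ and solving for $b$ gives the claimed representation.

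For the first equality in~(i), ``$\leq$'' follows from $\stp\subset\st$. For ``$\geq$'', my plan combines two steps. Integration by parts applied to $\mathrm{e}^{-rt}(\tilde{P}_t-\tilde{p})$, combined with the compensation identity $\EW[\int_0^T \mathrm{e}^{-rs}\md\tilde{P}_s]=\lambda m\,\EW[\int_0^T\mathrm{e}^{-rs}\md s]$ and with $\tilde{P}_s=\tilde{P}_{s-}$ Lebesgue-a.e., gives for every stopping time $T>0$ with enough integrability,
\[
Q(T):=\frac{\EW[\mathrm{e}^{-rT}\sum_{k=1}^{N_T}Y_k]}{1-\EW[\mathrm{e}^{-rT}]} = \tilde{p} + \frac{\lambda m}{r} - \frac{\EW\!\left[\int_{0}^{T}\mathrm{e}^{-rs}\tilde{P}_{s-}\md s\right]}{\EW\!\left[\int_{0}^{T}\mathrm{e}^{-rs}\md s\right]}.
\]
The crucial observation is that for every $\FA$-stopping time $T$ and every $\epsilon>0$, the shifted time $T+\epsilon$ is a predictable stopping time, since $(T+\epsilon-1/m)_{m>1/\epsilon}$ is an announcing sequence. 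Choosing $\epsilon=1/n$ and invoking dominated convergence, using the uniform $L^2$-bound $\sup_t \mathrm{e}^{-rt}|\tilde{P}_t|\in L^2(\WM)$ (which follows from Doob's inequality applied to the $L^2$-martingale $\int_0^\cdot\mathrm{e}^{-rs}\md M_s$ for the compensated process $M_t=\sum_{k=1}^{N_t}Y_k-\lambda m t$), yields $Q(T+1/n)\to Q(T)$ and hence $\sup_{T'\in\stp, T'>0}Q(T')\geq Q(T)$ for every $T\in\st$ with $T>0$.

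Finally, for~(ii), the inclusions $\stp\subset\stme\subset\st$ for $p(\eta)>0$ combined with~(i) give $\sup_{T\in\stme,T>0}Q(T)=b$ as well. In~\eqref{Main:121} the denominator $\EW[R_{T-}]+\Delta$ is strictly positive on the stopping times that actually affect the infimum (trivially for $\Delta=1$; via the convention $\cdot/0=\infty$ for $\Delta=0$), so $\ell^\eta(p,\Delta)<0$ is equivalent to the existence of some $T\in\stme$, $T>0$, making the numerator $(1-\EW[\mathrm{e}^{-rT}])p-\EW[\mathrm{e}^{-rT}\sum_{k=1}^{N_T}Y_k]$ negative, i.e.\ $p<Q(T)$. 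Passing to the supremum over $T$ yields the desired equivalence $\ell^\eta(p,\Delta)<0\Leftrightarrow p<b$. The main technical obstacle throughout is the dominated convergence step ensuring $Q(T+1/n)\to Q(T)$ when $T$ is not a.s.\ bounded; a truncation argument $T\wedge M$ together with the $L^2$-integrability of $Y_1$ should suffice, but must be carried out carefully because of the running supremum of $\tilde{P}_-$ appearing implicitly in the numerator.
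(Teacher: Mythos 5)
Your proposal is correct and rests on the same two devices as the paper's proof: the representation~\eqref{Main:13} evaluated with the explicit predictable solution~\eqref{Main:12} for the second identity in~\eqref{Main:88}, and the predictability of $T+\tfrac1n$ to pass from $\stp$ to $\st$ for the first; part~(ii) is then, as in the paper, the factorization $\ell_T(p,\Delta)=\frac{1-\EW[\mathrm{e}^{-rT}]}{\EW[R_{T-}]+\Delta}\bigl(p-Q(T)\bigr)$ with $Q(T):=\EW[\mathrm{e}^{-rT}\sum_{k=1}^{N_T}Y_k]/(1-\EW[\mathrm{e}^{-rT}])$, combined with $\stp\subset\stme\subset\st$. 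The differences are in execution and mostly to your credit: the paper obtains the second equality by writing out~\eqref{Main:13} at a general predictable stopping time $S$ and invoking the strong Markov property before solving for $b$, whereas you just take expectations at $S=0$, which suffices and avoids the Markov argument altogether; conversely, the paper merely asserts the approximation $Q(T+\tfrac1n)\to Q(T)$, while you substantiate it through the compensation/integration-by-parts identity and an explicit dominating variable. Two small repairs are needed but do not affect the architecture. First, the integrability $\sup_t \mathrm{e}^{-rt}|\tilde{P}_t|\in \mathrm{L}^1(\WM)$ (which is all you use) does not follow verbatim from Doob applied to $\int_0^\cdot \mathrm{e}^{-rs}\md M_s$ — that controls the stochastic integral, not $\mathrm{e}^{-rt}M_t$ itself; a blockwise bound such as $\sup_t \mathrm{e}^{-rt}|M_t|\le \sum_{j\ge 0}\mathrm{e}^{-rj}\sup_{t\le j+1}|M_t|$ with Doob on each block settles it. Second, in the direction ``$p<b\Rightarrow \ell^\eta(p,0)<0$'' you must choose a witness $T$ with $\EW[R_{T-}]>0$, since otherwise $\ell_T(p,0)=\infty$ by the convention in~\eqref{Main:121}; replacing $T$ by $T+\epsilon$, which your own continuity of $Q$ accommodates, does this (the paper elides the same point).
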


\begin{proof}
    \emph{(i):} 
    The first equation in~\eqref{Main:88} follows  from Theorem~\ref{Main:18} as any $T\in \stme$ can be approximated by  $(T_n)_{n\in \NZ}\subset \mathcal{S}^{\mathcal{P}}$ defined by $T_n:=T+\frac{1}{n}$, $n\in \NZ$.
    It remains to show that $b$ satisfies also the second equality.	For that fix a predictable stopping time $S$
		and remember $L^{\mathcal{P}}=a({^{\mathcal{P}}\tilde{P}}-b)
		=a(\tilde{P}_{-}-b)$. Then we obtain by the strong Markov property
		of L\'evy processes, ${^{\mathcal{P}} N}_S=\EW[N_S|\FA_{S-}] 
		=N_{S-}$ and~\eqref{Main:13} that
		\begin{align}
		&\tilde{P}_{S-}\mathrm{e}^{-rS}={^{\mathcal{P}} P}_S=\EW\left[\int_{[S,\infty)}
				\sup_{v\in [S,t]} L_v^{\mathcal{P}}
				\md R_t \midG \FA_{S-}\right]
				=\EW\left[\int_{(S,\infty)}
				 	\sup_{v\in [S,t]} L_v^{\mathcal{P}}
				\md R_t \midG \FA_{S-}\right]\\
				&=a\EW\left[\int_{(S,\infty)}
				\sup_{v\in [S,t]} (\tilde{P}_{v-}-\tilde{P}_{S-})\,
				\md R_t\midG \FA_{S-}\right]
				+a\EW\left[\left(\tilde{P}_{S-}-b\right)\left(R_{\infty-}-R_S\right)
				\midG \FA_{S-}\right]\\
				&=a\EW\left[\int_{[0,\infty)}
					\left(\sup_{v\in [0,t]} \sum_{k=1}^{N_{v-}}Y_k\right)\,
				\md R_t\right]\mathrm{e}^{-rS}
				+a(\tilde{P}_{S-}-b)\mathrm{e}^{-rS}
				\EW\left[R_{\infty-}\right].
		\end{align}
		Solving for $b$ and recalling that $\EW[R_{\infty-}]=\frac{\lambda}{r}=\frac{1}{a}$ gives the second equality for $b$.
		
    \emph{(ii):} By definition of $\ell^\eta$ (see~\eqref{Main:126}) we get
    \[
        \ell^\eta(p,\Delta)=
        \inf_{0<T\in \stme} \frac{1-
    			\EW\left[\mathrm{e}^{-rT}\right]
    			}{\EW\left[R_{T-}            			\right]+\Delta}\left(p-\frac{\EW\left[\mathrm{e}^{-rT}	\sum_{k=1}^{N_T}Y_k \right]}{1-
    			\EW\left[\mathrm{e}^{-rT}\right]}\right),
    \]
    which shows (ii) by (i).
\end{proof}

\subsection{Optimal controls 
for an imperfect sensor}\label{sec:3.2}

	Next we consider controls that can use an imperfect jump sensor, i.e.~a sensor with probability of failing to alert 
	   $p(\eta)=\WM(|Y_1|<\eta)\in(0,1)$.
	In this class, we find the following optimal control:

	\begin{Thm}[Optimal control with  $\eta$-sensor]
	\label{Main:41}
		In the case $p(\eta)\in(0,1)$, an optimal control for~\eqref{Main:8} is
		\[
			C_t^{\Lambda^\eta}:=\uP \vee\sup_{v\in [0,t]} 
			L^{\Lambda^\eta}_v ,\quad t\in [0,\infty),
		\]
		with the maximal solution $L^{\Lambda^\eta}$ to~\eqref{Main:33} and~\eqref{Main:13} given explicitly by
		\begin{align}\label{Main:57}
				L_t^{\Lambda^\eta} = \begin{cases}
					0,& \tilde{P}^\eta_t\geq b,\
										|\Delta \tilde{P}^\eta_t|\geq \eta,
										\vspace{2ex}\\
				\frac{r}{\lambda}	(\tilde{P}^\eta_t-b),
							& \tilde{P}^\eta_t\geq b,\
							|\Delta \tilde{P}^\eta_t|
							< \eta,\vspace{2ex}\\
				\inf\limits_{\gamma^0 \in (0,B_0^{\eta}\cdot(b-\tilde{P}^\eta_t))} 
				f^\eta_1(\gamma^0,0,\tilde{P}^\eta_t)<0 
					,& \tilde{P}^\eta_t< b,\
							|\Delta \tilde{P}^\eta_t|\geq \eta,\vspace{2ex}\\
					\inf\limits_{\gamma^1 \in (-B_1^{\eta}\cdot(b-\tilde{P}^\eta_t),0)} 
				f^\eta_0(0,\gamma^1,\tilde{P}^\eta_t)<0 ,& \tilde{P}^\eta_t
						< b,\ |\Delta \tilde{P}^\eta_t|< \eta.
				\end{cases}
		\end{align}
		Here,  $b$ is as in Corollary~\ref{Main:89}, 
		\begin{align}\label{Main:105}
		    B_0^{\eta}:=1-\frac{\lambda r}{\lambda+r}\cdot\frac{1-p(\eta)}{r+\lambda\left(1-\delta\right)}<1,	\quad
		     B_1^{\eta}:=\frac{1-\lambda p(\eta)+\frac{(\lambda+r)\lambda}{r}\left(1-\delta\right)}{\lambda p(\eta)}>0 	
		\end{align}
		with 
		 \begin{align}\label{Main:106}
        \delta:=\EW\left[\mathrm{e}^{-rT^0}\right] \quad \text{ for }\quad T^0:=\inf\left\{t\geq 0\midG \sum_{k=1}^{N_t}Y_k\geq 0,\ N_t\geq 1\right\};
    \end{align}
    moreover, for $\Delta\in \{0,1\}$,
		the functions $f^\eta_\Delta:
		\RZ \times \RZ\times \RZ 
		\rightarrow \RZ$
		are given by
		\begin{align}\label{Main:16}
			f^\eta_\Delta(\gamma^0,\gamma^1,p):=\frac{\left(1-
			\EW\left[\mathrm{e}^{-rT^\eta(\gamma^0,\gamma^1)}\right]\right)p
			-\EW\left[\mathrm{e}^{-rT^\eta(\gamma^0,\gamma^1)} 
			\sum\limits_{k=1}^{N_{T^\eta(\gamma^0,\gamma^1)}}Y_k\right]}{
			\frac{\lambda}{r}\left(1-
			\EW\left[\mathrm{e}^{-rT^\eta(\gamma^0,\gamma^1)}\right]\right)
			-\EW\left[\mathrm{e}^{-rT^\eta(\gamma^0,\gamma^1)}
			\mathbb{1}_{\{|\Delta \tilde{P}_{T^\eta(\gamma^0,\gamma^1)}|\geq \eta\}}\right]+\Delta },
		\end{align}
		with
		\begin{align}\label{Main:62}
			T^\eta(\gamma^0,\gamma^1):=\inf\Big\{t \in \{{^{\Lambda^\eta} N}>0\} 
		&\left.	\midG \left(|\Delta \tilde{P}_t|< \eta\text{ and } \tilde{P}_{t-}-\Tilde{p}\geq \gamma^0\right)\right.\\
         &\left.   \text{ or } \left(|\Delta \tilde{P}_t|\geq  \eta\text{ and } \tilde{P}_t-\Tilde{p}\geq \gamma^1\right)\right\}.
		\end{align}
	\end{Thm}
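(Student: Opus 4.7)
The plan is to invoke Lemma~\ref{Main:11}(ii), which via Theorem~\ref{Main:14} already guarantees optimality of the running-maximum control $C^{L^{\Lambda^\eta}} = c_0 \vee \sup_{v \in [0,\cdot]} L^{\Lambda^\eta}_v$ as soon as the maximal representing process $L^{\Lambda^\eta}$ is identified explicitly. By~\eqref{Main:34},~\eqref{Main:126} and~\eqref{Main:121} this reduces, for each $p \in \RZ$ and $\Delta \in \{0,1\}$, to computing
\begin{align*}
\ell^\eta(p,\Delta) = \inf_{0<T\in \stme}\ell_T(p,\Delta),
\end{align*}
and then reading off $L^{\Lambda^\eta}_t$ by substituting the two observable quantities $p = \tilde{P}^\eta_t$ and $\Delta = {^{\Lambda^\eta}(\Delta N)}_t$.

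The central step is to reduce this infimum, which a priori ranges over all of $\stme$, to the two-parameter family of double-threshold hitting times $T^\eta(\gamma^0,\gamma^1)$ of~\eqref{Main:62}. Via the canonical-space construction from the proof of Lemma~\ref{Main:11}(ii), every $\Lambda^\eta$-stopping time fresh from a given state corresponds to an $\tilde{\FA}^\eta$-stopping time of a compound Poisson process, and the ratio~\eqref{Main:121} depends on $T$ only through the moments $\EW[\mathrm{e}^{-rT}]$, $\EW[\mathrm{e}^{-rT}\sum_{k=1}^{N_T}Y_k]$, $\EW[R_{T-}]$ and $\EW[\mathrm{e}^{-rT}\mathbb{1}_{|\Delta\tilde{P}_T|\geq\eta}]$. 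Exploiting the strong Markov property, the i.i.d.\ jump structure and a Dinkelbach-type reformulation of the fractional functional $\ell_T$, I would argue that the infimum is attained in the limit by stopping rules monitoring only two thresholds: $\gamma^0$ on the pre-jump value $\tilde{P}_{t-}$ at a small-jump arrival and $\gamma^1$ on the post-jump value $\tilde{P}_t$ at a large-jump arrival. Plugging $T = T^\eta(\gamma^0,\gamma^1)$ into~\eqref{Main:121} and evaluating the resulting expectations yields precisely $f^\eta_\Delta(\gamma^0,\gamma^1,p)$ of~\eqref{Main:16}.

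With this reduction in hand, the four cases of~\eqref{Main:57} follow by direct analysis. In the two regimes with $\tilde{P}^\eta_t \geq b$ the extremal threshold choices give either $0$ or the predictable-style value $\frac{r}{\lambda}(\tilde{P}^\eta_t - b)$: the former is obtained in the limit $\gamma^1 \uparrow 0$ of $f^\eta_0$ (triggering at the next large jump), while the latter follows from the limit $\gamma^1 \uparrow \infty$ of $f^\eta_1$ together with Theorem~\ref{Main:18} and the representation of $b$ in Proposition~\ref{Main:89}(i). In the two regimes with $\tilde{P}^\eta_t < b$, Proposition~\ref{Main:89}(ii) forces $\ell^\eta(\tilde{P}^\eta_t,\Delta) < 0$, so attention can be restricted to threshold pairs rendering $f^\eta_\Delta$ strictly negative; a sign analysis of its numerator and denominator shows that exactly one threshold matters per case and confines it to $\gamma^0 \in (0, B_0^\eta(b-\tilde{P}^\eta_t))$ when $\Delta = 1$ (alert just received) and to $\gamma^1 \in (-B_1^\eta(b-\tilde{P}^\eta_t), 0)$ when $\Delta = 0$ (no alert). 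The explicit constants $B_0^\eta, B_1^\eta$ of~\eqref{Main:105} and $\delta$ of~\eqref{Main:106} then emerge by evaluating the first-passage moments at the first large jump and at the auxiliary time $T^0$, and iterating the strong Markov property to produce rational expressions in $\lambda, r, p(\eta)$ and $\delta$.

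The principal obstacle is the reduction in the second paragraph, namely justifying that the infimum over all $\Lambda^\eta$-stopping times coincides with the infimum over the explicitly parameterized double-threshold family. While the compound Poisson structure strongly suggests this outcome, $\ell_T(p,\Delta)$ is a fractional functional of $T$ rather than a linear expected payoff, so one cannot apply classical optimal stopping theorems off the shelf. Instead, I would combine a Dinkelbach-type parametrization that linearizes the fractional objective with a direct comparison argument at the first Poisson arrival, propagating the threshold form inductively through subsequent jumps via the strong Markov property.
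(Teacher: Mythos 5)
There is a genuine gap at exactly the point you flag as the ``principal obstacle'': the reduction of the infimum in \eqref{Main:126} over all of $\stme$ to the double-threshold family $T^\eta(\gamma^0,\gamma^1)$ is the technical heart of the theorem, and your proposal only gestures at it (``Dinkelbach-type reformulation \dots I would argue \dots propagating the threshold form inductively'') without carrying it out. It is not clear this route works as described: a Dinkelbach linearization turns the fractional criterion $\ell_T(p,\Delta)$ into an optimal stopping problem whose reward involves $\EW[R_{T-}]$ (a left limit of the clock) and the jump-size indicator at $T$, posed over $\Lambda^\eta$-stopping times; this is itself a non-standard problem for which no off-the-shelf threshold characterization is available, so the induction ``through subsequent jumps'' would have to be built from scratch. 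The paper avoids this entirely: Lemma~\ref{Main:119} is a verification identity, proved by exploiting the representation property \eqref{Main:13} of the already-existing maximal solution $L^{\Lambda^\eta}$ at the first passage time of $L^{\Lambda^\eta}$ above its current level at a clock tick (approximated by the stopping times $T^L_{S,n}$ and the sets $A_n$), yielding the exact fractional formula \eqref{Main:96}; combined with the Markov/L\'evy identification \eqref{Main:34} and the continuity, concavity and strict monotonicity of $\ell^\eta(\cdot,\Delta)$ from Proposition~\ref{Main:128}, this shows the optimizer is the level-crossing time $T^{\ell^\eta}(p,\Delta)$, which \emph{is} a member of the family \eqref{Main:62} with one relative threshold pinned to $0$ (equation \eqref{Main:130}). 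Your alternative explanation of the pinning --- ``a sign analysis of the numerator and denominator shows that exactly one threshold matters per case'' --- is not correct as stated: both thresholds enter the stopping rule, and what must be shown is that the optimal threshold in the same-$\Delta$ coordinate equals the current level, which requires the strict monotonicity of $\ell^\eta$ on $(-\infty,b)$ together with the structure of the optimizer from Lemma~\ref{Main:119}; none of Proposition~\ref{Main:128} is established or even invoked in your plan.

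Two smaller points. First, your treatment of the regimes $\tilde{P}^\eta_t\geq b$ has the roles of $f^\eta_0$ and $f^\eta_1$ swapped (the value $0$ belongs to $\Delta=1$ and $\frac{r}{\lambda}(\tilde{P}^\eta_t-b)$ to $\Delta=0$), and the paper obtains these not by threshold limits but by sandwiching with deterministic times $T_n=\tfrac1n$ and with $\ell^\infty$ via Theorem~\ref{Main:18} and Proposition~\ref{Main:89}. Second, your derivation of $B_0^\eta$, $B_1^\eta$ by ``evaluating first-passage moments at the first large jump and at $T^0$'' is in the right spirit but needs the explicit comparison bounds \eqref{Main:7} and \eqref{Main:15} (a lower bound using $T^\eta(\gamma^0,0)\geq T^1$ and an upper bound using $T^0+\tfrac1n$ together with Corollary~\ref{Main:93}); as written these interval restrictions are asserted rather than derived.
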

	The proof of the previous theorem will be given at the end of this section.  A separate result on the optional case $p(\eta)=0$ is obtained in Section~\ref{sec:optional} and an
    illustration of all our findings is deferred to Section~\ref{sec:ill}. Before proceeding to the proof of this theorem, let
               us note how the minimal storage level
               $L^{\Lambda^\eta}$ approaches that of the
               predictable case $L^{\mathcal{P}}$ when the
               sensor becomes more and more useless as its
               failure probability tends to 1: 		

\begin{Cor}\label{Main:65}
	In the setting of Theorem~\ref{Main:41},
                consider a sequence $(\eta_n)_{n\in
                  \NZ}\subset [0,\infty]$ such that $\lim_n
                p(\eta_n)= 1$. Then the solution $L^{\Lambda^{\eta_n}}$, $n\in \NZ$, converges to $L^{\mathcal{P}}$ (see Theorem~\ref{Main:18}) for $n\rightarrow \infty$:
	\[
		\lim_{n\rightarrow \infty} L^{\Lambda^{\eta_n}}_t(\omega)
		=L^{\mathcal{P}}_t(\omega),\quad  t \in
                        [0,\infty), \quad \omega \in \Omega.
	\]
\end{Cor}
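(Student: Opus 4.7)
My approach combines the monotonicity of $L^{\Lambda^\eta}$ in $\eta$ with a case-by-case analysis of the explicit piecewise formula~\eqref{Main:57} of Theorem~\ref{Main:41}.

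First I would set up a monotonicity reduction. For $\eta\leq\eta'$ the inclusion $\Lambda^{\eta}\supset\Lambda^{\eta'}$ (a weaker sensor encodes less information) yields $\mathcal{S}^{\Lambda^{\eta'}}\subset\mathcal{S}^{\Lambda^{\eta}}$, so by the essinf representation~\eqref{Main:24} the process $L^{\Lambda^\eta}$ is non-decreasing in $\eta$, with $L^{\Lambda^\eta}\leq L^{\mathcal{P}}$ since $\mathcal{S}^{\mathcal{P}}\subset\mathcal{S}^{\Lambda^\eta}$. As $p(\cdot)$ is non-decreasing, $p(\eta_n)\to 1$ implies $\eta_n\geq\bar\eta$ for all $n$ large and any $\bar\eta<\eta^\ast:=\inf\{\eta\geq 0:p(\eta)=1\}$, hence $\liminf_n L^{\Lambda^{\eta_n}}_t\geq L^{\Lambda^{\bar\eta}}_t$. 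Combined with $L^{\Lambda^{\eta_n}}_t\leq L^{\mathcal{P}}_t$, this reduces the claim to proving $\lim_{\bar\eta\uparrow\eta^\ast}L^{\Lambda^{\bar\eta}}_t(\omega)=L^{\mathcal{P}}_t(\omega)$.

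Next, fix $\omega$ in a set of full $\WM$-measure and $t\geq 0$. Since $|Y_k(\omega)|<\eta^\ast$ for every $k$, one has $|\Delta\tilde{P}_t(\omega)|<\bar\eta$ for $\bar\eta$ close enough to $\eta^\ast$; by Lemma~\ref{Main:11}\,(i) this gives $\tilde{P}^{\bar\eta}_t(\omega)=\tilde{P}_{t-}(\omega)$ and rules out cases~1 and~3 of~\eqref{Main:57}. In case~2 ($\tilde{P}_{t-}(\omega)\geq b$) the formula directly reads $L^{\Lambda^{\bar\eta}}_t(\omega)=\tfrac{r}{\lambda}(\tilde{P}_{t-}(\omega)-b)=L^{\mathcal{P}}_t(\omega)$, so no limit is required.

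The nontrivial case is~4 ($\tilde{P}_{t-}(\omega)<b$), where $L^{\Lambda^{\bar\eta}}_t(\omega)=\inf_{\gamma^1}f^{\bar\eta}_0(0,\gamma^1,\tilde{P}_{t-}(\omega))$. Here I would analyse the limit as $\bar\eta\uparrow\eta^\ast$ by observing that the second term $\EW[\mathrm{e}^{-rT^{\bar\eta}(0,\gamma^1)}\mathbb{1}_{\{|\Delta\tilde{P}_{T^{\bar\eta}(0,\gamma^1)}|\geq\bar\eta\}}]$ in the denominator of~\eqref{Main:16} vanishes by dominated convergence as $\WM(|Y_1|\geq\bar\eta)\to 0$, while a suitably chosen family $\gamma^1_{\bar\eta}\in(-B_1^{\bar\eta}(b-\tilde{P}_{t-}),0)$ makes $T^{\bar\eta}(0,\gamma^1_{\bar\eta})$ approximate any prescribed predictable stopping time, leveraging the strong Markov and L\'evy structure of $\tilde{P}$ exactly as in the proof of Lemma~\ref{Main:11}\,(ii). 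Together with Proposition~\ref{Main:89}\,(i) and the predictable identity $\EW[R_{T-}]=\tfrac{\lambda}{r}(1-\EW[\mathrm{e}^{-rT}])$ used in Theorem~\ref{Main:18}, this identifies the limit of the infimum as $\tfrac{r}{\lambda}(\tilde{P}_{t-}(\omega)-b)=L^{\mathcal{P}}_t(\omega)$, matching the upper bound obtained from monotonicity.

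The main obstacle is the selection of $\gamma^1_{\bar\eta}$ and the passage to the limit: one has to produce, for each $\varepsilon>0$, a choice such that $T^{\bar\eta}(0,\gamma^1_{\bar\eta})$ is $\varepsilon$-optimal for the predictable supremum representation~\eqref{Main:88} of $b$, and simultaneously ensure that the denominator $\EW[R_{T^{\bar\eta}(0,\gamma^1_{\bar\eta})-}]$ stays bounded away from zero so that $o(1)$-perturbations of numerator and denominator of $f^{\bar\eta}_0$ translate into $o(1)$-perturbations of the ratio. Both properties are secured by $p(\bar\eta)<1$ via Lemma~\ref{Main:11}\,(ii), which guarantees $\ell^{\bar\eta}(\tilde{P}_{t-},0)>-\infty$ and thus prevents the denominator from collapsing.
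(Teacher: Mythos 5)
There is a genuine gap, and it sits exactly where the real work of the corollary lies. After your (correct) reduction to the undetected regime, the quantity to control in case 4 is $\ell^{\eta}(p,0)=\inf_{\gamma^1<0}f^{\eta}_0(0,\gamma^1,p)$ for fixed $p<b$, and the two directions are of very different difficulty: the inequality $\ell^{\eta}(p,0)\le\ell^{\infty}(p,0)$ is immediate because $\stp\subset\stme$, i.e.\ $\eta\mapsto\ell^{\eta}(p,0)$ is increasing (the infimum in \eqref{Main:126} runs over a larger family of stopping times for smaller $\eta$, while $\ell_T(p,\Delta)$ in \eqref{Main:121} does not depend on $\eta$). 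What actually has to be proved is the reverse direction, $\liminf_{n}\ell^{\eta_n}(p,0)\ge\ell^{\infty}(p,0)$. Your plan invests its effort in choosing $\gamma^1_{\bar\eta}$ so that $T^{\bar\eta}(0,\gamma^1_{\bar\eta})$ approximates prescribed predictable stopping times; that construction can only reproduce the easy upper bound and does not touch the needed lower bound. For the lower bound, the obstruction is that the infimum is taken over all $\gamma^1<0$: pointwise (in $\gamma^1$) dominated convergence of the detection term $\EW[\mathrm{e}^{-rT^{\eta}(0,\gamma^1)}\mathbb{1}_{\{|\Delta\tilde P_{T^{\eta}(0,\gamma^1)}|\ge\eta\}}]$ does not prevent $\eta$-dependent near-minimizers $\gamma^1$ from spoiling the limit, so one needs an estimate that is uniform in $\gamma^1$. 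The paper obtains precisely this: using $T^{\eta}(0,\gamma^1)\le T^0$ and Proposition~\ref{Main:89}(i) (so that $b$ dominates the ratio for \emph{every} stopping time, giving a numerator bound $\ge(1-\EW[\mathrm{e}^{-rT^0}])(p-b)$ uniformly in $\gamma^1$), together with the elementary uniform bounds $0\le\EW[\mathrm{e}^{-rT^{\eta}(0,\gamma^1)}\mathbb{1}_{\{|\Delta\tilde P_{T^{\eta}(0,\gamma^1)}|\ge\eta\}}]\le 1-p(\eta)$ and $0\le\EW[\mathrm{e}^{-rT^{\eta}(0,\gamma^1)}-\mathrm{e}^{-rT^0}]\le 1-p(\eta)$, and the identification \eqref{Main:141} from Corollary~\ref{Main:93}. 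Your remark that $\ell^{\bar\eta}(\tilde P_{t-},0)>-\infty$ keeps the denominator from collapsing is not a substitute for such a uniform estimate.

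A secondary, repairable flaw: the opening sandwich argument asserts that the process $L^{\Lambda^{\eta}}$ is pointwise non-decreasing in $\eta$ and dominated by $L^{\mathcal P}$. This is false at detected jump times; e.g.\ if $\tilde P_{t-}<b\le\tilde P_t$ and $|\Delta\tilde P_t|\ge\eta$, then by \eqref{Main:57} one has $L^{\Lambda^{\eta}}_t=0>\frac{r}{\lambda}(\tilde P_{t-}-b)=L^{\mathcal P}_t$, and the essinf comparison via \eqref{Main:24} is not legitimate across different $\eta$ because the conditioning $\sigma$-fields, the evaluation times and the arguments $(\tilde P^{\eta}_t,{^{\Lambda^{\eta}}(\Delta N)_t})$ all change with $\eta$. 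The valid monotonicity is only that of $p\mapsto$ fixed, $\eta\mapsto\ell^{\eta}(p,0)$, which is all one needs once your second-paragraph reduction (undetected jump for $n$ large) is in place — this is in fact the paper's route, which moreover works with $1-p(\eta_n)\to0$ directly and needs no monotone reparametrisation $\bar\eta\uparrow\eta^\ast$.
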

\begin{proof}
    For $\omega\in \Omega$ and $t\in [0,\infty)$, we have for any
    $\eta\geq \Delta \tilde{P}_t(\omega)$ that $\tilde{P}^\eta_t(\omega)=\tilde{P}_{t-}(\omega)$ and ${^{\Lambda^\eta} (\Delta N)_t}(\omega)=0$. 
    Hence, due to~\eqref{Main:34}, we only have to show $\lim_{n\rightarrow \infty} \ell^{\eta_n}(p,0)=\ell^{\infty}(p,0)$ for any $p\in \RZ$. In case $p\geq b$, we have by $\eqref{Main:12}$ and $\eqref{Main:57}$ that $ \ell^{\eta}(p,0)
		=\ell^{\infty}(p,0)$  for any $\eta\in[0,\infty]$. Assume henceforth $p< b$. By definition of $\ell^\eta$ we have that $\eta\mapsto \ell^\eta(p,0)$ is increasing. Hence, we infer that $\ell^{\eta_n}(p,0)
		    \leq\ell^\infty(p,0)$
	     and it suffices to show $\lim_{n\rightarrow \infty} \ell^{\eta_n}(p,0)  \geq \ell^{\infty}(p,0)$. We obtain by Theorem~\ref{Main:18} and the characterization of $a$ from Corollary~\ref{Main:93} below that
		\begin{align}\label{Main:20}
		    \ell^\infty(p,0)
		   =\frac{1-
			\EW\left[\mathrm{e}^{-rT^0}\right]}{
			\frac{\lambda}{r}\left(1-
			\EW\left[\mathrm{e}^{-rT^0}\right]\right)
			}(p-b).
		\end{align}
		In order to conclude our assertion, we will show next how to find a lower bound for $\ell^{\eta}(p,0)=\inf_{\gamma_1<0}f^\eta_0(0,\gamma_1,p)$, $\eta\in [0,\infty]$, from~\eqref{Main:16}  which converges to  $\ell^\infty(p,0)$ when $p(\eta)\rightarrow  1$. For $\gamma_1<0$, we have $T^0\geq T^\eta(0,\gamma_1)$, $\eta\in[0,\infty]$,
		and therefore, using also 
		\begin{align}
		b\geq \frac{1}{1-
			\EW\left[\mathrm{e}^{-rT^\eta(0,\gamma^1)}\right]}\EW\left[\mathrm{e}^{-rT^\eta(0,\gamma^1)} 
			\sum\limits_{k=1}^{N_{T^\eta(0,\gamma^1)}}Y_k\right],
		\end{align}
		establishes
		\begin{align}
		    f^\eta_0(0,\gamma^1,p)&\geq \frac{1-
			\EW\left[\mathrm{e}^{-rT^0}\right]}{
			\frac{\lambda}{r}\left(1-
			\EW\left[\mathrm{e}^{-rT^\eta(0,\gamma^1)}\right]\right)
			-\EW\left[\mathrm{e}^{-rT^\eta(0,\gamma^1)}
			\mathbb{1}_{\{|\Delta \tilde{P}_{T(0,\gamma^1)}|\geq \eta\}}\right] }\left(p-b\right).\label{Main:63}
		\end{align}
		Comparing~\eqref{Main:20} and~\eqref{Main:63} shows that it remains  to prove that the denominator in~\eqref{Main:63} converges to the denominator in~\eqref{Main:20} uniformly in $\gamma_1<0$ when $p(\eta) \rightarrow 1$:
		\begin{align}
		    \lim_{p(\eta)\rightarrow 1}\sup_{\gamma_1<0}\left|
			\frac{\lambda}{r}\left(\EW\left[\mathrm{e}^{-rT^0}\right]-
			\EW\left[\mathrm{e}^{-rT^\eta(0,\gamma^1)}\right]\right)
			-\EW\left[\mathrm{e}^{-rT^\eta(0,\gamma^1)}
			\mathbb{1}_{\{|\Delta \tilde{P}_{T(0,\gamma^1)}|\geq \eta\}}\right] 
			\right|=0.
		\end{align}
		This uniform convergence is a consequence of the estimates
		\begin{align}
		    0\leq \EW\left[\mathrm{e}^{-rT(0,\gamma^1)}
			\mathbb{1}_{\{|\Delta \tilde{P}_{T(0,\gamma^1)}|\geq \eta\}}\right]
			\leq 1-p(\eta),
		\quad 
		    0\leq  \EW\left[\mathrm{e}^{-rT(0,\gamma^1)}
		    -\mathrm{e}^{-rT^0}\right]
		    \leq 1-p(\eta),
		\end{align}
		which hold for any $\gamma_1<0$.
\end{proof}

The rest of this section is devoted to the \textbf{proof of Theorem~\ref{Main:41}}.	
	From Lemma~\ref{Main:11} we know that we only have to determine $\ell^\eta$ from~\eqref{Main:126}. Let us prepare this by some auxiliary results for $\ell^\eta$.

\begin{Pro}\label{Main:128}
   If $p(\eta)>0$, the functions $\ell^\eta(\cdot,0)$ and $\ell^\eta(\cdot,1)$ are both continuous, increasing and concave on $\RZ$. Moreover, on $(-\infty,b)$, both functions  are strictly increasing  and satisfy
   $\ell^\eta(\cdot,0)<\ell^\eta(p,1)<0$. On $[b,\infty)$, $\ell^\eta(\cdot,0)$ and $\ell^\eta(\cdot,1)$ are determined by
   \begin{align}
       \ell^\eta(p,0)=a(p-b) \geq 0 = \ell^\eta(p,1), \quad p\in [b,\infty),
   \end{align}
   where $a$, $b$ are the constants from Theorem~\ref{Main:18}.
\end{Pro}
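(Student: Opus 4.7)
The plan is to exploit the affine structure of $p\mapsto\ell_T(p,\Delta)$. Abbreviating $A_T(p):=(1-\EW[\mathrm{e}^{-rT}])p-\EW[\mathrm{e}^{-rT}\sum_{k=1}^{N_T}Y_k]$ and $D_T:=\EW[R_{T-}]$, one has $\ell_T(p,\Delta)=A_T(p)/(D_T+\Delta)$, which is affine in $p$ with non-negative slope $(1-\EW[\mathrm{e}^{-rT}])/(D_T+\Delta)$. Consequently $\ell^\eta(\cdot,\Delta)=\inf_{0<T\in\stme}\ell_T(\cdot,\Delta)$ is, as an infimum of affine non-decreasing functions, concave and non-decreasing. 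Finiteness of $\ell^\eta(\cdot,\Delta)$ on $\RZ$ has already been established in the proof of Lemma~\ref{Main:11}~(ii), so concavity upgrades to continuity on all of $\RZ$.

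For the explicit values on $[b,\infty)$, I would first note that the first equality in Proposition~\ref{Main:89}~(i) gives $A_T(b)\geq 0$ for every $T\in\stme$ with $T>0$, so that $A_T(p)=A_T(b)+(1-\EW[\mathrm{e}^{-rT}])(p-b)\geq 0$ and therefore $\ell^\eta(p,\Delta)\geq 0$ for all $p\geq b$. Combined with Proposition~\ref{Main:89}~(ii) and the continuity established above, this forces $\ell^\eta(b,\Delta)=0$. For $\Delta=0$, the inclusion $\stp\subset\stme$ together with Theorem~\ref{Main:18} yields the upper bound $\ell^\eta(p,0)\leq\ell^\infty(p,0)=a(p-b)$. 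For the matching lower bound, the decisive ingredient is the compensator of $N$: since $R-(\lambda/r)(1-\mathrm{e}^{-r\cdot})$ is a martingale and $R_{T-}=R_T-\mathrm{e}^{-rT}\Delta N_T$, one gets $D_T\leq(\lambda/r)(1-\EW[\mathrm{e}^{-rT}])$, hence $(1-\EW[\mathrm{e}^{-rT}])/D_T\geq r/\lambda=a$. It follows that $\ell_T(p,0)=\ell_T(b,0)+((1-\EW[\mathrm{e}^{-rT}])/D_T)(p-b)\geq\ell_T(b,0)+a(p-b)$ for $p\geq b$, and taking the infimum over $T$ gives $\ell^\eta(p,0)\geq\ell^\eta(b,0)+a(p-b)=a(p-b)$. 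For $\Delta=1$, the bound $\ell^\eta(p,1)\leq 0$ is obtained by plugging in the deterministic times $T_n=1/n\in\stme$, for which $A_{T_n}(p)\to 0$ and $D_{T_n}+1\to 1$.

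Strict monotonicity on $(-\infty,b)$ is then a soft consequence of what has been shown: a concave non-decreasing function $f$ with $f(b)=0$ and $f<0$ on $(-\infty,b)$ must be strictly increasing there, because a flat stretch $[p_1,p_2]\subset(-\infty,b)$ would, by concavity and monotonicity, propagate all the way to $+\infty$, contradicting $f(b)=0>f(p_1)$. The right inequality in $\ell^\eta(p,0)<\ell^\eta(p,1)<0$ for $p<b$ is Proposition~\ref{Main:89}~(ii). For the left inequality, I would fix $p<b$, take a minimising sequence $(T_n)\subset\stme$ for $\ell^\eta(p,1)$ so that $A_{T_n}(p)<0$ eventually, and use the algebraic identity $\ell_{T_n}(p,0)-\ell_{T_n}(p,1)=A_{T_n}(p)/(D_{T_n}(D_{T_n}+1))$ together with the uniform bound $D_{T_n}\leq\lambda/r$ to deduce $\ell_{T_n}(p,0)\leq\ell_{T_n}(p,1)(1+r/\lambda)$; passing to the limit yields $\ell^\eta(p,0)\leq\ell^\eta(p,1)(1+r/\lambda)<\ell^\eta(p,1)$ because $\ell^\eta(p,1)<0$.

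The main obstacle I anticipate is the lower bound $\ell^\eta(p,0)\geq a(p-b)$ on $[b,\infty)$, which cannot be deduced merely from the enlargement $\stp\subset\stme$ and genuinely rests on the compensator estimate $D_T\leq(\lambda/r)(1-\EW[\mathrm{e}^{-rT}])$; the tightness of this inequality is a predictable feature, but the resulting slope bound $a\leq(1-\EW[\mathrm{e}^{-rT}])/D_T$ carries over to all of $\stme$ and survives the infimum-taking.
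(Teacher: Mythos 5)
Your proof is correct, and its skeleton coincides with the paper's: concavity and monotonicity from the infimum of non-decreasing affine maps $p\mapsto\ell_T(p,\Delta)$, continuity from finiteness plus concavity, the sign dichotomy at $b$ from Proposition~\ref{Main:89}~(ii), the upper bound $\ell^\eta(p,0)\leq\ell^\infty(p,0)=a(p-b)$ from $\stp\subset\stme$ and Theorem~\ref{Main:18}, the lower bound from $A_T(b)\geq 0$ together with $\EW[R_{T-}]\leq\EW[R_T]=\tfrac{\lambda}{r}\left(1-\EW[\mathrm{e}^{-rT}]\right)$, and $T_n=1/n$ for $\ell^\eta(\cdot,1)=0$ on $[b,\infty)$. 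You diverge in two sub-steps, in both cases defensibly. For strict monotonicity on $(-\infty,b)$ the paper runs a contradiction argument with a minimizing sequence $(T_n)$, forcing the slopes $(1-\EW[\mathrm{e}^{-rT_n}])/(\EW[R_{T_n-}]+\Delta)$ to vanish and then contradicting $\ell^\eta(p_2,\Delta)<0$ via the representation of $b$ in~\eqref{Main:88}; your argument is softer: a concave non-decreasing function that is flat on a subinterval of $(-\infty,b)$ stays constant to the right of it, contradicting negativity below $b$ and non-negativity at $b$. This is shorter and uses only facts already established, at the price of being less quantitative. For the strict gap $\ell^\eta(p,0)<\ell^\eta(p,1)$ when $p<b$, the paper merely restricts to stopping times with $\ell_T(p,\Delta)<0$ and declares the conclusion immediate; your multiplicative bound $\ell_T(p,0)\leq(1+r/\lambda)\,\ell_T(p,1)$, which rests on the uniform estimate $\EW[R_{T-}]\leq\lambda/r$, supplies exactly the uniformity needed to pass the strict inequality through the two infima, so on this point your write-up is actually more complete than the paper's.
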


\begin{proof}
  For $\Delta\in \{0,1\}$ we obtain easily from~\eqref{Main:126}, \eqref{Main:121} that $p\mapsto \ell^\eta(p,\Delta)$ is increasing. 
    Moreover, concavity follows as the infimum of affine functions is  concave. Moreover, as $\ell^\eta(\cdot,0)$, $\ell^\eta(\cdot,0)$ are concave functions taking real-values, we have that they are also continuous.
    
   Next, we obtain from Corollary~\ref{Main:89} that for $p\in \RZ$, $\Delta\in \{0,1\}$, we have $\ell^\eta(p,\Delta)<0$ if and only if $p< b$. 
   Moreover, in the case $p<b$ we can restrict in~\eqref{Main:126} to $0<T\in \stme$ with $\ell_T(p,\Delta)<0$. This immediately shows $\ell^\eta(p,0)<\ell^\eta(p,1)<0$ for $p<b$.
   
    \emph{$\ell^\eta(\cdot,1)$ for $p\geq b$:} We get with $T_n=\frac{1}{n}$ in~\eqref{Main:126} 
    that $0\leq \ell^\eta(p,1)\leq \lim_{n\rightarrow \infty} \ell_{\frac1n}(p,1)=0$.
    
    \emph{$\ell^\eta(\cdot,0)$ for $p= b$:} We have by Theorem~\ref{Main:18} and Lemma~\ref{Main:11} (ii), that $\ell^\infty(p,0)=0$ and
    $0\leq \ell^\eta(p,0)\leq \ell^\infty(p,0)=0$. 
    
    \emph{$\ell^\eta(\cdot,0)$ for $p>b$:}
    If $p>b$, we get from Theorem~\ref{Main:18}, Lemma~\ref{Main:11} (ii) and ~\eqref{Main:88} that
    \begin{align}
	    \ell^\infty(p,0)\geq \ell^\eta(p,0)&\geq \inf_{0<T\in \stm}\frac{1-
    			\EW\left[\mathrm{e}^{-rT}\right]}{\EW\left[R_{T-}
    			\right]}\left(p-b\right)\\&\geq \inf_{0<T\in \stm}\frac{1-
    			\EW\left[\mathrm{e}^{-rT}\right]}{\EW\left[R_{T}
    			\right]}\left(p-b\right)=\frac{r}{\lambda}\left(p-b\right) =\ell^\infty(p,0);
	\end{align}
   so, we must have equality everywhere and, in particular, $\ell^\eta(p,0)=\ell^\infty(p,0)>0$.

    Finally, we prove strict monotonicity on $(-\infty,b)$. For that fix $\Delta\in \{0,1\}$ and
    assume by way of contradiction that
    $p_1<p_2<b$ with $\ell^\eta(p_1,\Delta)=\ell^\eta(p_2,\Delta)$. Now there exists a sequence $(T_n)_{n\in \NZ}$ such that
    $\ell_{T_n}(p_2,\Delta)$ (see~\eqref{Main:121}) decreases to $\ell^\eta(p_2,\Delta)$ with maximal distance $\frac1n$. Then
    \begin{align}
        0=\ell^\eta(p_1,\Delta)-\ell^\eta(p_2,\Delta)&\leq \ell_{T_n}(p_1,\Delta)-\ell_{T_n}(p_2,\Delta)+\frac1n \\\nonumber
       & =
       \frac{1-
    			\EW\left[\mathrm{e}^{-rT_n}\right]}{\EW\left[R_{T_n-}
    			\right]+\Delta}(p_1-p_2)+\frac1n\leq 0+\frac1n
    \end{align}
    and therefore, by $p_1<p_2$,
    \begin{align}
        \lim_{n\rightarrow \infty}\frac{1-
    			\EW\left[\mathrm{e}^{-rT_n}\right]}{\EW\left[R_{T_n-}
    			\right]+\Delta}=0.
    \end{align}
    Hence, we get by~\eqref{Main:88} that
    \begin{align*}
    0&>\ell^\eta(p_2,\Delta)=\lim_{n\rightarrow \infty}\ \frac{1-
    			\EW\left[\mathrm{e}^{-rT_n}\right]}
    			{\EW\left[R_{T_n-}
    			\right]+\Delta}\left(p_2-\frac{\EW\left[\mathrm{e}^{-rT_n}	\sum_{k=1}^{N_{T_n}}Y_k \right]}{1-
    			\EW\left[\mathrm{e}^{-rT_n}\right]}\right)\\
    			&\geq \lim_{n\rightarrow \infty}\ \frac{1-
    			\EW\left[\mathrm{e}^{-rT_n}\right]}
    			{\EW\left[R_{T_n-}
    			\right]+\Delta}\left(p_2-b\right)=0,
    \end{align*}
    which is the desired contradiction.
\end{proof}

The next lemma shows how the infimum in~\eqref{Main:126} is attained in a relaxed sense:

\begin{Lem}\label{Main:119}
    In the case $p(\eta)>0$, we have
    for $p\in \RZ$ and $\Delta\in \{0,1\}$ that
	\begin{align}\label{Main:142}
	    \ell^\eta(p,\Delta)=\frac{\left(1-
    			\EW\left[\mathrm{e}^{-rT^{\ell^\eta}(p,\Delta)}\right]\right)p
    			-\EW\left[\mathrm{e}^{-rT^{\ell^\eta}(p,\Delta)}	\sum_{k=1}^{N_{T^{\ell^\eta}(p,\Delta)}}Y_k \right]}{\EW\left[\Delta R_{T^{\ell^\eta}(p,\Delta)}\mathbb{1}_{(H^{\ell^\eta}(p,\Delta))^c}+R_{T^{\ell^\eta}(p,\Delta)-}
    			\right]+\Delta},
	\end{align}
	where 
	\begin{align}\label{Main:59}
	    	T^{\ell^\eta}(p,\Delta)&:=\inf\left\{t\in \{{^{\Lambda^\eta} N}> {0}\} \midG \ell^\eta({^{\Lambda^\eta} \tilde{P}}_t-\tilde{p}+p,{^{\Lambda^\eta} (\Delta N)_t})\geq \ell^\eta(p,\Delta) \right\},\\\label{Main:60}
 	    H^{\ell^\eta}(p,\Delta)&:=\{{\ell^\eta}({^{\Lambda^\eta} P}_{T^{\ell^\eta}(p,\Delta)}-\tilde{p}+p,{^{\Lambda^\eta} (\Delta N)_{T^{\ell^\eta}(p,\Delta)}})\geq {\ell^\eta}(p,\Delta)\}\cap \{{^{\Lambda^\eta} N_{T^{\ell^\eta}(p,\Delta)}}>0 \}.
 	\end{align}
\end{Lem}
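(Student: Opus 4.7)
The strategy is to interpret $\ell^\eta(p,\Delta)$ as the value of an optimal stopping problem with ratio-type reward and to identify $T^{\ell^\eta}(p,\Delta)$ as the (relaxed) optimizer---essentially the first entry time of the pathwise value $\ell^\eta({^{\Lambda^\eta}\tilde P}_t-\tilde p+p,{^{\Lambda^\eta}(\Delta N)_t})$ into the interval $[\ell^\eta(p,\Delta),\infty)$.

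I would first dispose of the boundary cases handled by Proposition~\ref{Main:128}. When $p\geq b$ and $\Delta=1$, we have $\ell^\eta(p,1)=0$ and~\eqref{Main:142} collapses to $0=0$ via the minimizing sequence $T_n=1/n\downarrow 0$. When $p\geq b$ and $\Delta=0$, Proposition~\ref{Main:128} gives $\ell^\eta(p,0)=\ell^\infty(p,0)=a(p-b)$, and a direct computation along predictable minimizers (as in the proof of Proposition~\ref{Main:128}, using $\EW[R_{T-}]/(1-\EW[\mathrm{e}^{-rT}])\to 1/a$) verifies~\eqref{Main:142} in that regime. The substantive case is $p<b$, where $\ell^\eta(p,\Delta)<0$.

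For $p<b$, I would fix a minimizing sequence $(T_n)\subset\stme$ with $\ell_{T_n}(p,\Delta)\leq\ell^\eta(p,\Delta)+\tfrac1n$ and show that it may be replaced by one concentrated near $T^{\ell^\eta}(p,\Delta)$. The key reduction is the bound $T_n\leq T^{\ell^\eta}$: linearising the ratio via $\EW[\text{numerator}(T_n)]=\ell_{T_n}(p,\Delta)\cdot\EW[\text{denominator}(T_n)]$ (cf.~\eqref{Main:25}) and splitting at $T^{\ell^\eta}$, the Lévy/strong Markov property of $\tilde P$ together with the translation invariance of the problem shows that the contribution from $[T^{\ell^\eta},T_n)$ has a ratio no smaller than $\ell^\eta({^{\Lambda^\eta}\tilde P}_{T^{\ell^\eta}}-\tilde p+p,{^{\Lambda^\eta}(\Delta N)}_{T^{\ell^\eta}})\geq\ell^\eta(p,\Delta)$ by definition of $T^{\ell^\eta}$; concavity and monotonicity of $\ell^\eta(\cdot,\cdot)$ from Proposition~\ref{Main:128} then force the pre-$T^{\ell^\eta}$ ratio to be at most $\ell_{T_n}(p,\Delta)$, justifying the truncation.

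Finally, on the event $H^{\ell^\eta}$ the threshold is already attained at $T^{\ell^\eta}$, so the truncated $T_n$'s may be taken equal to $T^{\ell^\eta}$ with $R_{T_n-}\to R_{T^{\ell^\eta}-}$; on $(H^{\ell^\eta})^c$ the threshold is crossed only infinitesimally after $T^{\ell^\eta}$, so I would invoke the Meyer Section Theorem~\ref{Main:84} (applied to the $\Lambda^\eta$-set $\stsetO{T^{\ell^\eta}}{\infty}\cap\{\ell^\eta(\cdot,\cdot)\geq\ell^\eta(p,\Delta)\}$) to produce $\Lambda^\eta$-stopping times $T^+_n\downarrow T^{\ell^\eta}$ strictly from above, along which $R_{T^+_n-}\to R_{T^{\ell^\eta}}=R_{T^{\ell^\eta}-}+\Delta R_{T^{\ell^\eta}}$. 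Dominated convergence in the numerator (using $Y_1\in\mathrm{L}^2(\WM)$ and boundedness of $\mathrm{e}^{-rT}$) and monotone convergence in the denominator then deliver~\eqref{Main:142}, with the extra $\Delta R_{T^{\ell^\eta}}\mathbb{1}_{(H^{\ell^\eta})^c}$ term emerging precisely from the right-approximation on $(H^{\ell^\eta})^c$. The main obstacle I anticipate is the strong-Markov splitting in the second step: since $\ell^\eta$ is itself an infimum of ratios, a standard dynamic programming argument is not directly available, and one must work with the linearised form of~\eqref{Main:25} and carefully exploit the translation invariance of the compound Poisson dynamics to compare pre- and post-$T^{\ell^\eta}$ contributions.
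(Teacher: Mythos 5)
Your route---treating $\ell^\eta(p,\Delta)$ as a ratio-type optimal stopping value and arguing via a minimizing sequence truncated at the entry time $T^{\ell^\eta}(p,\Delta)$---is genuinely different from the paper's proof, which never optimizes over stopping times at this stage: it verifies the identity directly from the representation property~\eqref{Main:13} of $L^{\Lambda^\eta}$, by splitting $\EW[\int_{[S,\infty)}\sup_{v\in[S,t]}L_v\,\md R_t\mid\aFA_S]$ at the return time $T_S^L$, re-applying~\eqref{Main:13} at the approximating $\Lambda$-stopping times $(T_S^L)_{H_S^c}+\tfrac1n$ on the events $A_n$ of no intermediate Poisson jump, and then translating the resulting identity~\eqref{Main:96} into~\eqref{Main:142} via Lemma~\ref{Main:11}~(ii) and the L\'evy property. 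Measured against what~\eqref{Main:142} actually requires, your sketch has gaps at exactly the delicate points.

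First, the inequality driving your truncation step---that the contribution on $[T^{\ell^\eta},T_n)$ has ratio at least $\ell^\eta({^{\Lambda^\eta}\tilde P}_{T^{\ell^\eta}}-\tilde p+p,{^{\Lambda^\eta}(\Delta N)}_{T^{\ell^\eta}})\geq\ell^\eta(p,\Delta)$ ``by definition of $T^{\ell^\eta}$''---fails on $(H^{\ell^\eta}(p,\Delta))^c$: by the very definition of that event the condition in~\eqref{Main:59} is violated at $T^{\ell^\eta}$ itself, so the state value there is \emph{below} $\ell^\eta(p,\Delta)$. On $(H^{\ell^\eta})^c$ you must cut strictly after $T^{\ell^\eta}$, keep the risk-clock atom $\Delta R_{T^{\ell^\eta}}$ in the pre-piece (this is where that term of~\eqref{Main:142} originates in \emph{both} directions, not only in the right-approximation for ``$\leq$''), and then argue that the post-jump state $(\tilde P_{T^{\ell^\eta}}-\tilde p+p,0)$ does lie in the region because the state is constant between jumps and entry occurs immediately after $T^{\ell^\eta}$; none of this appears in your sketch. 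Second, the assertion that ``the truncated $T_n$'s may be taken equal to $T^{\ell^\eta}$'' on $H^{\ell^\eta}$ is precisely the crux and is not proved: truncation only shows the infimum is approached by times stopped at or before the (relaxed) $T^{\ell^\eta}$, and you still have to exclude that stopping strictly earlier---while the pathwise value is still below $\ell^\eta(p,\Delta)$---gives a strictly smaller ratio. Concavity/monotonicity of $\ell^\eta$ do not yield this, and the mediant argument runs the wrong way there, since the conditional essinf identity~\eqref{Main:24} bounds continuation ratios from \emph{below}, not above; some verification input such as~\eqref{Main:13} (which is exactly what the paper uses, exploiting that $\md R$ only charges jump times and that the running supremum of $L$ equals $L_S$ at all charged times before $T_S^L$) is needed to close this. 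Finally, the ``strong-Markov splitting'' of conditional ratios over $\Lambda^\eta$-stopping times at the random time $T^{\ell^\eta}$ requires the canonical-space decomposition of $\tilde\FA^\eta$-stopping times from Courr\`ege--Priouret already invoked in the proof of Lemma~\ref{Main:11}~(ii), or equivalently a direct appeal to~\eqref{Main:24}--\eqref{Main:25}; invoking ``translation invariance'' alone, as you do, does not supply it, and you flag but do not resolve this obstacle. (A smaller point: your boundary discussion for $p\geq b$, $\Delta=1$ does not actually verify~\eqref{Main:142}, whose right-hand side is evaluated at $T^{\ell^\eta}$ rather than along $T_n=1/n$; in the intended reading for $\Delta=1$ the time-$0$ jump itself makes all $t>0$ eligible, and this needs to be said.)
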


\begin{Rem}
    \begin{enumerate}[label=(\roman*)]
    \item
    The quadruple 
    \[
        \tau^{\ell^\eta}(p,\Delta):=(T^{\ell^\eta}(p,\Delta),\emptyset,H^{\ell^\eta}(p,\Delta),H^{\ell^\eta}(p,\Delta)^c)
    \]
    with $T^{\ell^\eta}(p,\Delta)$ and $H^{\ell^\eta}(p,\Delta)$ defined in the previous lemma is a so-called divided stopping time. The theory of divided stopping times was developed in \cite{EK81} and we will give some more details in Appendix~\ref{app:integration}.
    \item Note that~\eqref{Main:142} does not by itself construct $\ell^\eta$ from~\eqref{Main:126} as $\ell^\eta$ is also contained on the right-hand side. It shows, however, that we should restrict attention to a certain class of stopping times~\eqref{Main:59} when computing the infimum in~\eqref{Main:126}. This is the key to our explicit solution.
    \end{enumerate}
\end{Rem}

 \begin{proof}
    To verify~\eqref{Main:96},
    we will just write $L$ and $\Lambda$ instead of $L^{\Lambda^\eta}$ and $\Lambda^\eta$ in the following. 
    
    	Lemma~\ref{Main:119} will follow from Lemma~\ref{Main:11} (ii) and from the fact that $\tilde{P}$ is a L\'evy process, if we can show for any $S\in \stm$ that
	\begin{align}\label{Main:96}
	    L_S^{\Lambda} = \frac{{^\Lambda P}_S-\EW\left[P_{T_{S}^L}\midG\aFA_S\right]}{\EW\left[\Delta R_{T_{S}^L}\mathbb{1}_{H^c_S}+ R_{T_S^L-}-R_{S-}
	\midG \aFA_S\right]},
	\end{align}
	where
	\begin{align}
	    	T_S^{L}&:=\inf\left\{t> S\midG L_t\geq L_{S} \text{ and } {^\Lambda N}_t> {N_{S-}}\right\},\\
 	    H_S&:=\{L_{T_S^{L}}\geq  L_S\}\cap \{{^\Lambda N_{T_S^{L}}}> { N_{S-}}\}
 	    \cap\{S<T_S^{L}\}.
 	\end{align}
	   	            Let us consider the sequence 
\begin{align}\label{Main:58}
	T^L_{S,n}:=(T_S^L)_{H_S}\wedge \left((T_{S}^L)_{H^c_S}+\frac1n\right),\quad 
	n\in \NZ,
\end{align}
By \citing{DM78}{Theorem 50}{116}, the random time $T_S^L$ is an $\FA$-stopping time, whence	$(T_{S}^L)_{H^c_S}+\frac1n$
is a predictable $\FA$-stopping time and, therefore, also a
$\Lambda$-stopping time. Moreover we see that the graph of $(T_S^L)_{H_S}$ is contained
in $\left\{t> S\midG L_t\geq L_{S} \text{ and } {^\Lambda N}_t>{^\Lambda N}_{S-}\right\}$ and therefore by \citing{EL80}{Corollary 2}{504}, $(T_S^L)_{H_S}$ is also a $\Lambda$-stopping time. As the minimum of two $\Lambda$-stopping times is a $\Lambda$-stopping time we obtain
that $T_{S,n}^L$ from~\eqref{Main:58} is a $\Lambda$-stopping time for every $n\in \NZ$. 
We define next $T_S^{L,1}:=\inf\{t> T_S^L|N_t>N_{T_S^L}\}$. As $N$ is piecewise constant, this implies $T_S^{L,1}>T_S^L$
almost surely on $\{T_S^L<\infty\}$. Now we define the sequence $(A_n)_{n\in \NZ}\subset \aFA_{T_{S,n}^L}$ via
\begin{align}
    A_n:=\{T_S^{L,1}>T_{S,n}^L\}\cap \{T_S^L<\infty\}
\end{align}
and observe that $A_n\subset A_{n+1}$ for $n\in \NZ$ with $\bigcup_{n\in \NZ} A_n=\{T_S^L<\infty\}$. Note that
\begin{align}\label{Main:111}
    \md R((T_S^L,T_{S,n}^L])=0\quad \text{ on } \quad A_n.
\end{align}
By~\eqref{Main:31}  we have on $A_n\cap H_S^c$ that $L^\ast_{T_S^L}=L_{T_S^L+\frac{1}{n}}$ implying 
\begin{align}\label{Main:112}
    \sup_{v\in (T_S^L,t]}L_v =\sup_{v\in [T_{S,n}^L,t]}L_v\quad \text{ on } \quad A_n\cap \{t\geq T_{S,n}^L\}\cap H_S^c \quad\text{ for }\quad t\in (0,\infty).
\end{align}
Next, we get by definition of $A_n$ and~\eqref{Main:13} that
\begin{align}\label{Main:114}
    ^\Lambda P_S&=\EW\left[\int_{[S,\infty)}\sup_{v\in [S,t]}L_v\md R_t\midG\aFA_S\right]\\
    &=\lim_{n\rightarrow \infty}\EW\left[\mathbb{1}_{A_n}\int_{[S,\infty)}\sup_{v\in [S,t]}L_v\md R_t\midG\aFA_S\right]
    +L_S\EW\left[\mathbb{1}_{\{T_S^L=\infty\}}(R_{\infty-}-R_{S-})\midG\aFA_S\right].
\end{align}
The first conditional expectation above is equal to
\begin{align}
   &\ \EW\left[\mathbb{1}_{H^c_S\cap A_n}\left(
                \int_{(T_{S}^L,\infty)}\sup_{v\in (T_S^L,t]}L_v\md R_t+\int_{[S,T_{S}^L]}L_S\md R_t\right)\right.\\
                &\left.\hspace{5ex}+\mathbb{1}_{H_S\cap A_n}\left(
                \int_{[T_{S}^L,\infty)}\sup_{v\in [T_S^L,t]}L_v\md R_t+\int_{[S,T_{S}^L)}L_S\md R_t\right)\midG\aFA_S\right]\\
               \overset{\eqref{Main:111},\eqref{Main:112}}{=}&\ L_S \EW\left[\mathbb{1}_{A_n}(\Delta R_{T_{S}^L}\mathbb{1}_{H^c_S}+ R_{T_S^L-}-R_{S-})
	\midG \aFA_S\right]\\&\qquad\qquad+
               \EW\left[\mathbb{1}_{A_n} \int_{[T_{S,n}^L,\infty)}\sup_{v\in [T_{S,n}^L,t]}L_v\md R_t\midG\aFA_S\right]\\
                \overset{\eqref{Main:13}}{=}&\ L_S \EW\left[\mathbb{1}_{A_n}(\Delta R_{T_{S}^L}\mathbb{1}_{H^c_S}+ R_{T_S^L-}-R_{S-})
	\midG \aFA_S\right]
                +\EW\left[\mathbb{1}_{A_n}P_{T_{S,n}^L}\midG\aFA_S\right].
\end{align}
Plugging this into~\eqref{Main:114} leads to
\begin{align}\label{Main:17}
    ^\Lambda P_S = L_S\EW\left[\Delta R_{T_{S}^L}\mathbb{1}_{H^c_S}+ R_{T_S^L-}-R_{S-}
	\midG \aFA_S\right]
                +\EW\left[P_{T_{S}^L}\midG\aFA_S\right],
\end{align}
where we used the right-continuity of $P$ and its class($D^\Lambda$) property to apply Lebesgue's theorem. Now, \eqref{Main:17} is equivalent to the desired identity~\eqref{Main:96}.
 \end{proof}
 
 \begin{Cor}\label{Main:93}
    The constants $a,b$ defined in Theorem~\ref{Main:18} have the alternative descriptions
    \begin{align}\label{Main:141}
        a=\hat{a}:=\frac{1-\EW\left[\mathrm{e}^{-rT^0}\right]}{\EW[R_{T^0}]},\quad
        b=\hat{b}:=\frac{\EW\left[\mathrm{e}^{-rT^0}\sum_{k=1}^{N_{T^0}}Y_k\right]}{1-\EW\left[\mathrm{e}^{-rT^0}\right]},
    \end{align}
    where $T^0$ is defined in~\eqref{Main:106}.
\end{Cor}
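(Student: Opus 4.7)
The plan is to handle the two identities separately: derive $a=\hat a$ by a direct compensator calculation, and derive $b=\hat b$ by specializing the relaxed representation of Lemma~\ref{Main:119} to the predictable setting at $\eta=\infty$, $p=\tilde p$ and $\Delta=0$.

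For $a=\hat a$: the Poisson compensator $\lambda\,\md t$ of $N$ makes $R_\cdot-\lambda\int_0^\cdot\mathrm{e}^{-rs}\md s$ a bounded martingale, so optional stopping at $T^0$ gives $\EW[R_{T^0}]=\lambda\,\EW[\int_0^{T^0}\mathrm{e}^{-rs}\md s]=(\lambda/r)(1-\EW[\mathrm{e}^{-rT^0}])$, whence $\hat a=r/\lambda=a$ by the definitions in~\eqref{Main:141} and Theorem~\ref{Main:18}.

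For $b=\hat b$, I would apply Lemma~\ref{Main:119} in the predictable case. By Lemma~\ref{Main:11}(i) we have $^\mathcal{P}\tilde P=\tilde P_-$, $^\mathcal{P}N=N_-$ and $^\mathcal{P}(\Delta N)\equiv 0$, while Theorem~\ref{Main:18} combined with~\eqref{Main:34} yields $\ell^\infty(\cdot,0)=a(\cdot-b)$, a strictly increasing function. The defining condition of $T^{\ell^\infty}(\tilde p,0)$ in~\eqref{Main:59} therefore collapses to $\tilde P_{t-}\geq\tilde p$ together with $N_{t-}>0$. Enumerate the jump times $\tau_1<\tau_2<\cdots$ of $N$ and set $k^*:=\inf\{k\geq 1:\sum_{j=1}^k Y_j\geq 0\}$, so that $T^0=\tau_{k^*}$; the defining set is the union of those intervals $(\tau_{k-1},\tau_k]$, $k\geq 2$, with $\sum_{j=1}^{k-1}Y_j\geq 0$, whose leftmost member is $(\tau_{k^*},\tau_{k^*+1}]$. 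This gives $T^{\ell^\infty}(\tilde p,0)=\tau_{k^*}=T^0$ almost surely. For the event $H^{\ell^\infty}(\tilde p,0)$ in~\eqref{Main:60}: if $k^*=1$ then $\tilde P_{T^0-}=\tilde p$ meets the first requirement but $N_{T^0-}=0$ fails the second; if $k^*>1$ then minimality of $k^*$ forces $\tilde P_{T^0-}=\tilde p+\sum_{j=1}^{k^*-1}Y_j<\tilde p$, violating the first; hence $H^{\ell^\infty}(\tilde p,0)=\emptyset$ almost surely.

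Consequently the denominator in Lemma~\ref{Main:119} reduces to $\EW[\Delta R_{T^0}+R_{T^0-}]=\EW[R_{T^0}]$, and the lemma delivers
\begin{align*}
  \ell^\infty(\tilde p,0)=\frac{(1-\EW[\mathrm{e}^{-rT^0}])\,\tilde p-\EW[\mathrm{e}^{-rT^0}\sum_{k=1}^{N_{T^0}}Y_k]}{\EW[R_{T^0}]}=\hat a\,\tilde p-\hat a\hat b.
\end{align*}
Equating with $\ell^\infty(\tilde p,0)=a(\tilde p-b)$ from Theorem~\ref{Main:18} and using $a=\hat a$ from the first step forces $b=\hat b$. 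The main obstacle is the pathwise identification of $T^{\ell^\infty}(\tilde p,0)$ and the emptiness of $H^{\ell^\infty}(\tilde p,0)$: the infimum defining $T^{\ell^\infty}$ is not attained, because the left-limits $\tilde P_{t-}$, $N_{t-}$ make the defining set left-open at $T^0$, and this very feature is precisely what forces both clauses defining $H^{\ell^\infty}$ to fail simultaneously at $T^0$.
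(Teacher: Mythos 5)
Your proposal is correct and follows essentially the same route as the paper's (very terse) proof: specialize Lemma~\ref{Main:119} to $\eta=\infty$, where $^{\mathcal{P}}\tilde P=\tilde P_-$ and $^{\mathcal{P}}(\Delta N)\equiv 0$, and identify $T^{\ell^\infty}(p,0)=T^0$ and $H^{\ell^\infty}(p,0)=\emptyset$, so that \eqref{Main:142} matches $\ell^\infty(p,0)=a(p-b)$ from Theorem~\ref{Main:18} and yields \eqref{Main:141}. The only (harmless) variation is that you establish $a=\hat a$ separately by optional stopping of the compensated process at the non-predictable time $T^0$ and then read off $b=\hat b$ at the single point $p=\tilde p$, whereas the paper obtains both identities at once by matching the affine functions of $p$; your pathwise verification of $T^{\ell^\infty}=T^0$ (infimum not attained) and $H^{\ell^\infty}=\emptyset$ supplies exactly the details the paper leaves implicit.
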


\begin{proof}
    For $\eta=\infty$ we have $p(\eta)=1$ and ${^{\Lambda^\eta}(\Delta N)}\equiv 0$. Hence, by
     Lemma~\ref{Main:11}, Theorem~\ref{Main:18} and Lemma~\ref{Main:119}, we obtain that $T^{\ell^\eta}(p,0)=T^0$,  $H^{\ell^\eta}(p,0)=\emptyset$ giving~\eqref{Main:141}. 
\end{proof}

\begin{proof}[Proof of Theorem~\ref{Main:41}]
Lemma~\ref{Main:11} shows that we only have to determine the function  $\ell^\eta$ of~\eqref{Main:126}. For $p\geq b$ and $\Delta\in \{0,1\}$ this is already done by Proposition~\ref{Main:128}. So, assume henceforth that $p<b$. We will prove in the next lines that
\begin{align}\label{Main:40}
   \ell^\eta(p,\Delta)=f^\eta_{\Delta}(\Gamma^0(p,\Delta)-p,\Gamma^1(p,\Delta)-p)
\end{align}
with $f^\eta_\Delta$ defined in~\eqref{Main:16} and
\begin{align}\label{Main:69}
        \Gamma^0(p,\Delta)
        &:=\min\left\{\hat{p}\in (-\infty,b]\midG \ell^\eta(\hat{p},0)\geq \ell^\eta(p,\Delta)\right\},\\
        \Gamma^1(p,\Delta)&:= \min\left\{\hat{p}\in (-\infty,b]\midG \ell^\eta(\hat{p},1)\geq \ell^\eta(p,\Delta)\right\}.\label{Main:36}
\end{align}

We prove~\eqref{Main:40} by clarifying the structure of
$T^{\ell^\eta}(p,\Delta)$ and $H^{\ell^\eta}(p,\Delta)$ from ~\eqref{Main:59} and~\eqref{Main:60} respectively.
As $p<b$ we obtain by Proposition~\ref{Main:128} that the minima in~\eqref{Main:69} and~\eqref{Main:36} are well-defined and $\Gamma^0(p,\Delta)>\Gamma^1(p,\Delta)$. Moreover,
    as $\hat{p}\mapsto \ell^\eta(\hat{p},\Delta)$ is strictly increasing on $(-\infty,b)$ (see again Proposition~\ref{Main:128}) we get
    \begin{align}\label{Main:130}
        \Gamma^0(p,0)=p\quad \text{ and }\quad  \Gamma^1(p,1)=p.
    \end{align}
	Now, using the definitions of $\Gamma^0$, $\Gamma^1$ we can see that \[
	    T^{\ell^\eta}(p,\Delta)= T^\eta(\Gamma^0(p,\Delta)-p,\Gamma^1(p,\Delta)-p,\Delta),
	 \]
	and
    \begin{align}
        H^{\ell^\eta}(p,\Delta)&=\{|\Delta \tilde{P}_{T^{\ell^\eta}(p,\Delta)}|\geq \eta\},
    \end{align}
     where $T^\eta$,  $H^{\ell^\eta}$ are defined in~\eqref{Main:62} and~\eqref{Main:60} respectively.
    Hence, plugging those observations into~\eqref{Main:142} proves~\eqref{Main:40}.
    
     Equation~\eqref{Main:40} implies immediately  that
    \begin{align}\label{Main:127}
        \ell^\eta(p,\Delta)=\inf_{\gamma^0,\gamma^1\in (-\infty,b-p]}f^\eta_{\Delta}(\gamma^0,\gamma^1,p).
    \end{align}
    Hence, to finish the proof of Theorem~\ref{Main:41} we have to show that the infimum in~\eqref{Main:127} coincides with those in~\eqref{Main:57} for the respectively pertinent cases. This will be accomplished by characterizing further the optimal choices $\hat{\gamma}^0:=\Gamma^0(p,\Delta)-p$, $\hat{\gamma}^1:=\Gamma^1(p,\Delta)-p$.

	\emph{Let $\Delta = 1$.}  By~\eqref{Main:130} we get $\hat{\gamma}^1=0$ and hence we can reduce the infimum in~\eqref{Main:127} to an infimum over $f^\eta_{1}(\gamma^0,0,p)$. Moreover, by $\hat{\gamma}^0>\hat{\gamma}^1=0$ we can restrict to $\gamma^0>0$. Next we want to obtain the stated upper bound for the relevant $\gamma^0$. 
    Using $T^\eta(\gamma^0,0)\geq T^1$, with $T^1$ the first jump of $N$, we obtain for any $\hat{p}\leq b$ that
	\begin{align}
	    \ell^\eta(\hat{p},0)&\geq 
	    \frac{1-\EW\left[\mathrm{e}^{-rT^0}\right]}{
			\frac{\lambda}{r}\left(1-
			\EW\left[\mathrm{e}^{-rT^1}\right]\right)
			-\EW\left[\mathrm{e}^{-rT^1}
			\mathbb{1}_{\{|\Delta \tilde{P}_{T^1}|\geq \eta\}}\right]}\left(\hat{p}-b\right)\\
			&=
			\frac{\left(\lambda +r\right)\left(1-\EW\left[\mathrm{e}^{-rT^0}\right]\right)}{\lambda p(\eta)}(\hat{p}-b). \label{Main:7}
	\end{align}
	On the other hand, using $T_n:=T^0+\frac1n$, $n\in \NZ$, with $T^0$ from~\eqref{Main:106}, we obtain due to~\eqref{Main:141} an upper bound on $\ell^\eta(p,1)$:
	\begin{align}\label{Main:15}
	    \ell^\eta(p,1)\leq \frac{1-	\EW\left[\mathrm{e}^{-rT^0}\right]}{\frac{\lambda}{r}	\left(1-\EW\left[\mathrm{e}^{-rT^0}\right]\right)+1}(p-b).
	\end{align}
	Observe that the right-hand side of~\eqref{Main:7} will be not smaller than the right-hand side of~\eqref{Main:15} if and only if
\begin{align}\label{Main:101}
	    \hat{p}-p\geq \frac{r+\lambda\left(1-\EW\left[\mathrm{e}^{-rT^0}\right]\right)-\frac{\lambda r}{\lambda+r}(1-p(\eta))}{r+\lambda\left(1-\EW\left[\mathrm{e}^{-rT^0}\right]\right)} (b-p).
	\end{align}
	For such $\hat{p} (\leq b)$ we will have $\ell^\eta(\hat{p},0)\geq \ell^\eta(p,1)$ and thus $\Gamma^0(p,1)\leq \hat{p}$, which is tantamount with $\hat{\gamma}^0$ being less than the right-hand side in~\eqref{Main:101}.
	This shows that we can restrict to $\gamma_0<B_0^{\eta}(b-p)$, which we wanted to show.
	
	\emph{Let $\Delta = 0$.}   By~\eqref{Main:130} we get $\hat{\gamma}^0=0$. Hence, we can assume in~\eqref{Main:127} that
	$\gamma_0=0$ and by $\hat{\gamma}^0>\hat{\gamma}^1=0$ we only have to consider $\gamma^1< 0$. 
	To obtain a lower bound, we can use the boundaries established in~\eqref{Main:7} and~\eqref{Main:15} to see that  $\ell^\eta(p,0)> \ell^\eta(\hat{p},1)$, if
	\begin{align}
	    \hat{p}-p< -\left(\frac{1-\lambda p(\eta)+\frac{(\lambda+r)\lambda}{r}\left(1-\EW\left[\mathrm{e}^{-rT^0}\right]\right)}{\lambda p(\eta)}\right)  (b-p).
	\end{align}
	This shows that $\gamma_1$ has to be chosen larger than $-B_1^{\eta}(b-p)$, which finishes our proof.
\end{proof}		

\subsection{Optimal optional controls}\label{sec:optional}
	Let us complete our analysis and determine the optimal control
	in the case of $p(\eta)=0$ where $\Lambda=\mathcal{O}(\FA)$ (see Lemma~\ref{Main:11} (ii)).
	This case is special since Theorem~\ref{Main:14}
	is not directly applicable as explained in the
	following proposition, which at the same time provides us with a remedy:
	\begin{Pro}\label{Main:42}
		Assume $p(\eta)=0$.
		\begin{enumerate}[label=(\roman*)]
		    \item The process $P$ is indistinguishable from ${^\Lambda P}$.
			\item For $\tilde{p}<m\frac{\lambda}{r}$ the process $P$ is \emph{not} 
						$\md R$-right-upper-semicontinuous
						in expectation at time $0$
						(see Assumption~\ref{Main:6} (iii)). In particular, $P$ does \emph{not} satisfy Assumption
						\ref{Main:6}.
			\item The process $\bar{P}$ given by 
						\begin{align}\label{Main:74}
							\bar{P}_t:=	\begin{cases}
														\frac{\lambda}{\lambda + r}(\tilde{P}_t+m)\mathrm{e}^{-rt}
																&		\text{for } \tilde{P}_t<m\frac{\lambda}{r}, \
																\Delta \tilde{P}_t=0,\\
														P_t		&		\text{else},
													\end{cases}\quad t\in [0,\infty),
						\end{align}
						with $\bar{P}_\infty:=0$,
						satisfies Assumption~\ref{Main:6} and, for any stopping time $S\in \mathcal{S}$,
						\begin{align}\label{Main:67}
						\bar{P}_S&=\esssup_{T\geq S, \md R([S,T))=0 \text{ a.s.}}
								\EW[P_T|\FA_S]\\&=P_S\vee \EW[P_{T_S^1}|\FA_S]=P_S\vee\frac{\lambda}{\lambda+r}(\tilde{P}_S+m)\mathrm{e}^{-rS},
						\end{align}
						where
		\begin{align}\label{Main:54}
		    T_S^1:=\inf\{t\geq S|N_t>N_S\}.
		\end{align} 
						In particular, $\bar{P}\geq P$, up to an evanescent set, and there exists a process $\bar{L}$ satisfying~\eqref{Main:33} and~\eqref{Main:13} with $P$ replaced by $\bar{P}$. In fact, $\bar{P}$ is the smallest optional process larger than $P$ satisfying	Assumption~\ref{Main:6}, i.e.~for any other optional process $\hat{P} \geq P$ satisfying Assumption~\ref{Main:6} we have that $\{\hat{P}<\bar{P}\}$ is evanescent.
		\end{enumerate}
	\end{Pro}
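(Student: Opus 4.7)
The plan treats the three assertions of the proposition in turn.

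For~(i), Lemma~\ref{Main:11}(i) gives $\Lambda^\eta=\mathcal{O}(\FA)$ as soon as $p(\eta)=0$, and $P$ being c\`adl\`ag adapted is $\mathcal{O}(\FA)$-measurable. The characterization ${^\Lambda P}_S=\EW[P_S\,|\,\FA^\Lambda_S]$ of Definition and Theorem~\ref{Main:154} therefore reduces to ${^\Lambda P}_S=P_S$ at every $S\in\stm$, and Corollary~\ref{Main:68} forces indistinguishability. For~(ii), I take the constant sequence $S_n\equiv T^1$ where $T^1$ is the first Poisson jump: then $\md R([0,T^1))=0$ identically (no atoms of $R$ strictly before the first jump), while independence of $T^1$ from $Y_1$ combined with $T^1\sim\mathrm{Exp}(\lambda)$ yields $\EW[P_{T^1}]=\frac{\lambda}{\lambda+r}(\tilde{p}+m)$. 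Under the hypothesis $\tilde{p}<m\frac{\lambda}{r}$ this strictly exceeds $\EW[P_0]=\tilde{p}$, falsifying Assumption~\ref{Main:6}(iii) at $S=0$.

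The heart of~(iii) is the identity~\eqref{Main:67}. My plan is first to pin down the admissible class $\mathcal{T}_S:=\{T\in\mathcal{S}:\,T\geq S,\ \md R([S,T))=0\ \text{a.s.}\}$: since $R$ is purely atomic with atoms precisely at the Poisson jump times, the constraint confines $T$ to the stochastic interval $[S,T_S^1]$ and, on the event $\{\Delta N_S=1\}$, the atom of $R$ at $S$ further pins $T$ to $S$. Away from jumps of $N$, $\tilde{P}_T=\tilde{P}_S$ on $\{T<T_S^1\}$, so $\EW[P_T\,|\,\FA_S]$ becomes affine in $\mathrm{e}^{-rT}$ and $\FA_S$-randomization between $T=S$ and $T=T_S^1$ attains the maximum $P_S\vee\EW[P_{T_S^1}\,|\,\FA_S]$; the directed-upward structure of the family lifts this to the essential supremum. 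Strong Markov together with independence of the $Y_k$ delivers $\EW[P_{T_S^1}\,|\,\FA_S]=\mathrm{e}^{-rS}\frac{\lambda}{\lambda+r}(\tilde{P}_S+m)$, and the case split $\tilde{P}_S\gtrless m\frac{\lambda}{r}$ combined with the jump dichotomy $\Delta\tilde{P}_S=0$ vs.~$\Delta\tilde{P}_S\neq 0$ recovers the pathwise prescription~\eqref{Main:74} at every $S\in\mathcal{S}$; Corollary~\ref{Main:68} then upgrades equality at each stopping time to indistinguishability.

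Verification of Assumption~\ref{Main:6} for $\bar{P}$ proceeds as follows. Optionality is clear, since $\bar{P}$ is built from the optional process $\tilde{P}$ by continuous operations together with the optional indicator $\mathbb{1}_{\{\Delta\tilde{P}\neq 0\}}$, and class~$(D^\Lambda)$ follows from the single $\mathrm{L}^1$-dominator $|\tilde{p}|+\sum_{k\geq 1}|Y_k|\mathrm{e}^{-rT^k}$; the boundary condition at $\md R([S,\infty))=0$ is vacuous because $N$ has infinitely many jumps almost surely. Left-upper-semicontinuity at a predictable $S$ is in fact $\mathrm{L}^1$-continuity: total inaccessibility of the Poisson jumps forces $\Delta\tilde{P}_S=\Delta\tilde{P}_{S_n}=0$ a.s., so $\tilde{P}_{S_n}\to\tilde{P}_S$ and dominated convergence closes the argument. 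For the $\Lambda$-$\md R$-r.u.s.c., I derive a supermartingale-type bound from~\eqref{Main:67}: for any $S'\in\mathcal{T}_S$, an $\varepsilon$-optimizer $T^\varepsilon\in\mathcal{T}_{S'}$ satisfies $\md R([S,T^\varepsilon))=\md R([S,S'))+\md R([S',T^\varepsilon))=0$, placing $T^\varepsilon$ in $\mathcal{T}_S$ and yielding $\EW[\bar{P}_{S'}\,|\,\FA_S]\leq\bar{P}_S$. For a general sequence with $\md R([S,S_n))\to 0$, the atomic structure of $R$ forces $S_n\leq T_S^1$ eventually, so the truncation $\tilde{S}_n:=S_n\wedge T_S^1\in\mathcal{T}_S$ coincides with $S_n$ off a vanishing tail, and uniform integrability turns this into $\limsup_n\EW[\bar{P}_{S_n}]=\limsup_n\EW[\bar{P}_{\tilde{S}_n}]\leq\EW[\bar{P}_S]$.

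The existence of $\bar{L}$ is then immediate from Lemma~\ref{Main:19} applied with $\bar{P}$ in place of $P$. For minimality, let $\hat{P}\geq P$ be an optional process satisfying Assumption~\ref{Main:6}. Fix $S\in\stm$, $T\in\mathcal{T}_S$ and $A\in\FA_S$: the mixture $T^A:=S\mathbb{1}_{A^c}+T\mathbb{1}_A$ still lies in $\mathcal{T}_S$, and applying the r.u.s.c.\ bound to the constant sequence $S_n\equiv T^A$ gives $\EW[\hat{P}_S\mathbb{1}_A]\geq\EW[\hat{P}_{T^A}\mathbb{1}_A]\geq\EW[P_T\mathbb{1}_A]$; varying $A$ over $\FA_S$ yields $\hat{P}_S\geq\EW[P_T\,|\,\FA_S]$ a.s., and the essential supremum over $T\in\mathcal{T}_S$ produces $\hat{P}_S\geq\bar{P}_S$ a.s.\ at every $S\in\stm$, whence Corollary~\ref{Main:68} forces $\{\hat{P}<\bar{P}\}$ to be evanescent. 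I expect the main obstacle to be the careful handling of~\eqref{Main:67} at stopping times carrying a jump of $N$ with positive probability: there the atom of $R$ at $S$ collapses $\mathcal{T}_S$ to $\{S\}$, so the essential-supremum formula and the prescription~\eqref{Main:74} must be reconciled separately on this thin but non-evanescent set before invoking Meyer section.
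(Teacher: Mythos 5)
Your proposal is correct and follows essentially the same route as the paper's proof: the same constant-sequence counterexample $S_n\equiv T^1$ for (ii), the same identification of the constrained class $\{T\geq S:\md R([S,T))=0 \text{ a.s.}\}$ with (randomizations between) $S$ and $T_S^1$ to get~\eqref{Main:67}, the same truncation-at-$T_S^1$ plus uniform-integrability argument for the $\md R$-right-upper-semicontinuity of $\bar{P}$, and minimality via Assumption~\ref{Main:6}~(iii) applied to a constant sequence together with $\hat{P}\geq P$ (you argue this directly through $\FA_S$-mixtures $T^A$, the paper by contradiction with $(T_S^1)_A$ — a cosmetic difference). The only point to tighten is that on $\{\Delta N_S>0\}$ the atom of $\md R$ at $S$ means $S_n\wedge T_S^1$ need not satisfy $\md R([S,S_n\wedge T_S^1))=0$ for each fixed $n$, so you should additionally replace it by $S$ on that event (where the hypothesis forces $S_n=S$ eventually anyway) — the same minor point you already flag for~\eqref{Main:67} and which the paper itself treats loosely.
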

		
	\begin{proof}
	\emph{(i)} is an immediate consequence of $p(\eta)=0$ and  Corollary~\ref{Main:68} of the Meyer Section Theorem.
	
	\emph{(ii):} 
		With $T_1$ denoting the first jump time of the Poisson process 
		$N$, we have $\md R([0,T_1))=0$ and also
		\begin{align}
			\EW[P_0]=\tilde{p}<(\tilde{p}+m)\frac{\lambda}{\lambda+r}
			=\EW[P_{T_1}], 
		\end{align}
		where the inequality holds because  $\tilde{p}<m\frac{\lambda}{r}$ by assumption. 
		It follows that $P$ is not $\md R$-right-upper-semicontinuous
		in expectation at $0$. 
		
		\emph{(iii):}  Let us first argue~\eqref{Main:67} for any stopping time $S$. We have on $\{\Delta \tilde{P}_S=0\}\cap \{S<\infty\}$ that
		\begin{align}
			\bar{P}_S=\esssup_{T\geq S, \md R([S,T))=0 \text{ a.s.}}
								\EW[P_T|\FA_S]&=P_S\vee \EW[P_{T_S^1}|\FA_S]=P_S\vee\frac{\lambda}{\lambda+r}(\tilde{P}_S+m)\mathrm{e}^{-rS},
		\end{align}
		where $T_S^1$ is defined in~\eqref{Main:54}. As $P_S<\frac{\lambda}{\lambda+r}(\tilde{P}_S+m)\mathrm{e}^{-rS}$ is equivalent
		to $\tilde{P}_S<m\frac{\lambda}{r}$, we obtain 
		\begin{align}
		    \esssup_{T\geq S, \md R([S,T))=0 \text{ a.s.}}
								\EW[P_T|\FA_S]
					&=\begin{cases}
		                \frac{\lambda}{\lambda+r}(\tilde{P}_S+m)\mathrm{e}^{-rS},
		                &\text{ if } \tilde{P}_S<m\frac{\lambda}{r},\, \Delta \tilde{P}_S=0,\, S<\infty,\\
		                P_S,&\text{ else.}
		             \end{cases}
		\end{align}
		This proves~\eqref{Main:67}.
		We will show next that $\bar{P}$ satisfies Assumption~\ref{Main:6}.
		Part (i) of Assumption~\ref{Main:6} is clear. Part (ii)
		follows by Fatou's lemma and $\Delta \bar{P}_S=0$ a.s.\ at
		every predictable $\FA$-stopping times since for $\omega\in \{\Delta \tilde{P}_S=0\}$ such that $\tilde{P}_S(\omega)=\tilde{P}_{S-}(\omega)<m\frac{\lambda}{r}$ we will also have $\tilde{P}_{S_n}(\omega)<m\frac{\lambda}{r}$ for $n$ large enough. Hence, it remains to prove (iii)
		of Assumption~\ref{Main:6}. For that fix an $\FA$-stopping time $S$ 
		and a sequence $(S_n)_{n\in \NZ}$ of $\FA$-stopping times with $S_n\geq S$
		for all $n\in \NZ$ such that we have 
		$\lim_{n\rightarrow \infty} \md R([S,S_n))=0$ almost surely. Then for almost every $\omega \in \Omega$, we will have
		$S_n(\omega)\leq T_S^1(\omega)$ for sufficiently large $n$. Letting $\tilde{S}_n:=(S_n)_{\{S_n\leq T_S^1\}}$, we thus obtain by
		definition of $\bar{P}$ and Fatou's lemma that
		\begin{align}
		    \limsup_{n\rightarrow \infty}
		    \EW[\bar{P}_{S_n}]&\leq \limsup_{n\rightarrow \infty}\EW\left[
		    \bar{P}_{\tilde{S}_n}\right]
		    +\EW\left[\limsup_{n\rightarrow \infty}
		    \bar{P}_{S_n}\mathbb{1}_{\{S_n> T_S^1\}}\right]\\
		    &\overset{\eqref{Main:67}}{=} \limsup_{n\rightarrow \infty}\EW\left[
		    \esssup_{T\geq \tilde{S}_n, \md R([\tilde{S}_n,T))=0 \text{ a.s.}}
								\EW[P_T|\FA_{\tilde{S}_n}]\right]+0\\
			&= \limsup_{n\rightarrow \infty}\EW\left[
		    \esssup_{T\geq \tilde{S}_n, \md R([S,T))=0 \text{ a.s.}}
								\EW[P_T|\FA_{\tilde{S}_n}]\right]\\
		    &=  \limsup_{n\rightarrow \infty}\EW\left[
		    \esssup_{T\geq \tilde{S}_n, \md R([S,T))=0 \text{ a.s.}}
								\EW[P_T|\FA_S]\right]\\
			&\leq \EW\left[
		    \esssup_{T\geq S, \md R([S,T))=0 \text{ a.s.}}
								\EW[P_T|\FA_S]\right]\overset{\eqref{Main:67}}{=} \EW[\bar{P}_{S}].
		\end{align}
		Here, we have used in the second equality that on $\{S_n\leq T_S^1\}$ also $\md R([S,S_n))=0$; in the third equality we used that the essential supremum is upwards directed and so dominated convergence allows us to interchange the $\esssup$ and the $\FA_S$-conditional expectation.
		Hence, Assumption~\ref{Main:6} (iii) is satisfied. It now remains to
		show that $\bar{P}$ is the smallest optional process larger than $P$ satisfying 
		Assumption~\ref{Main:6}. For that assume there exists another  
		process $\hat{P}$ with $\hat{P}\geq P$ satisfying Assumption~\ref{Main:6}.
		By the Meyer Section Theorem it is enough to prove $\bar{P}_S\leq \hat{P}_S$ at every $\FA$-stopping time $S$. On the complement of  $A:=\{\tilde{P}_S<m\frac{\lambda}{r}\}\cap\{\Delta \tilde{P}_S=0\}\cap\{S<\infty\}=\{P_S<\bar{P}_S\}$ we have by definition of $\bar{P}$ that $\bar{P}_S=P_S\leq \hat{P}_S$. Therefore, let us 
		focus on $A$ and assume by way of contradiction $\WM(A)>0$. Then we can define
		the (constant) sequence $S_n:=(T_S^1)_A$, $n\in \NZ$, satisfying $S_n\geq S_A$ 
		and $\lim_{n\rightarrow\infty}\md R([S_A,S_n))=0$. Hence,
		as $\hat{P}$ satisfies Assumption~\ref{Main:6} (iii), 
		we get
		\begin{align}
		    \EW[\hat{P}_{(T_S^1)_A}]\leq \EW[\hat{P}_{S_A}]
		    <\EW[\bar{P}_{S_A}]\overset{\eqref{Main:67}}{=}
		    \EW[\EW[P_{(T_S^1)_A}|\FA_{S_A}]]
		    =\EW[P_{(T_S^1)_A}]\leq  \EW[\hat{P}_{(T_S^1)_A}],
		\end{align}
		which is the desired contradiction.
    \end{proof}

	Now we get the following analogue to
	Theorem~\ref{Main:41} for the optional case:

	\begin{Thm}[Optimal control in the optional case]
	\label{Main:39}
		In the case $p(\eta)=0$, the value of the optimization problem in~\eqref{Main:8} remains the same when we replace $P$ by $\bar{P}$:
			   \begin{align}\label{Main:50}
			    v:=\sup_{C\in \Cm}V(C)=\sup_{C\in \bar{\mathcal{C}}(\uP)}\bar{V}(C),
			   \end{align}
			   where $\bar{\mathcal{C}}(\uP)$ and $\bar{V}(C)$ denote the set of admissible controls and the value when $P$ is replaced by $\bar{P}$ from~\eqref{Main:74}. Moreover, an optimal optional control for both optimization problems is given by
		\[
			C_t^{\mathcal{O}}:=
			\uP \vee \sup_{v\in [0,t]}  L^{\mathcal{O}}_v,\quad t\in[0,\infty),
		\]
		with
		\begin{align}\label{Main:87}
				L_t^{\mathcal{O}} = \begin{cases}
					0,& \tilde{P}_t\geq b,\
										|\Delta\hspace{-0.1ex}
										\tilde{P}_t|>0,
										\vspace{2ex}\\
				\frac{r}{\lambda}	(\tilde{P}_t-b),
							& \tilde{P}_t\geq b,\
							\Delta\hspace{-0.1ex} \tilde{P}_t
							=0,\vspace{2ex}\\
			\frac{r}{\lambda +r}(b-\tilde{P}_t),& \tilde{P}_t< b,\
							|\Delta\hspace{-0.1ex}
							\tilde{P}_t|>0,\vspace{2ex}\\
				\inf\limits_{\gamma \in (-\infty,0)} 
				f(\gamma,\tilde{P}_t)<0,& m\frac{\lambda}{r}\leq \tilde{P}_t
						< b,\ \Delta\hspace{-0.1ex} \tilde{P}_t =  0,\\
				-\infty ,& \tilde{P}_t
						< m\frac{\lambda}{r},\ \Delta\hspace{-0.1ex} \tilde{P}_t =  0.
				\end{cases}
		\end{align}
		Here, $b$ is as in Theorem~\ref{Main:18} 
		and 
		the function $f:
		\RZ \times \RZ 
		\rightarrow \RZ$
		is given by
		\begin{align}\label{Main:86}
			f(\gamma,p):=\frac{\left(1-
			\EW\left[\mathrm{e}^{-rT(\gamma)}\right]\right)p
			-\EW\left[\mathrm{e}^{-rT(\gamma)} 
			\sum\limits_{k=1}^{N_{T(\gamma)}}Y_k\right]}{
			\frac{\lambda}{r}\left(1-
			\EW\left[\mathrm{e}^{-rT(\gamma)}\right]\right)
			-\EW\left[\mathrm{e}^{-rT(\gamma)}
			\right]},
		\end{align}
		with
		\begin{align}\label{Main:1}
			T(\gamma):=\inf\left\{t \in \{{ N}>0\} \right.
		&\left.	\midG |\Delta \tilde{P}_t|>0\text{ and } \tilde{P}_t-\Tilde{p}\geq \gamma \right\}.
		\end{align}
	\end{Thm}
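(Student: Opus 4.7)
The strategy is to reduce the problem to the relaxed one with reward $\bar P$ in place of $P$, apply Theorem~\ref{Main:14} to $\bar P$ (which, unlike $P$, satisfies Assumption~\ref{Main:6}), and then compute the maximal solution of the representation~\eqref{Main:13} explicitly, following the canonical-space technique used in the proof of Theorem~\ref{Main:41}.

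First, I would establish the value equality~\eqref{Main:50}. Since $\Lambda=\mathcal{O}(\FA)$ gives ${^\Lambda P}=P$ by Proposition~\ref{Main:42}~(i) and $\bar P\geq P$ by Proposition~\ref{Main:42}~(iii), one direction $\sup V\leq\sup \bar V$ is immediate. For the reverse, given any $C\in\bar{\mathcal{C}}(\uP)$, the plan is to construct $\tilde C\in \Cm$ by deferring every increment of $C$ that occurs on $\{\tilde P<m\lambda/r\}\cap\{\Delta \tilde P=0\}$ to the next Poisson jump time $T^1$. Because $\md R$ vanishes on the intervening inter-jump interval, the risk contribution is preserved; because $\bar P_t=\EW[P_{T^1_t}\midG \FA_t]$ by~\eqref{Main:67}, the tower property and a careful Fubini-type bookkeeping for the $\mdS$-integral yield $V(\tilde C)=\bar V(C)$, so $\sup V\geq \sup\bar V$ as well.

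Once~\eqref{Main:50} is established, Lemma~\ref{Main:19} applied to the reward $\bar P$ produces the maximal $\Lambda$-measurable representation process $L^{\mathcal{O}}$, and Theorem~\ref{Main:14} then yields optimality of $C^{\mathcal{O}}=\uP\vee\sup_{v\leq\cdot}L^{\mathcal{O}}_v$ for the $\bar V$-problem, and thereby for~\eqref{Main:8}, once~\eqref{Main:81} is verified. The verification proceeds as in Lemma~\ref{Main:11}~(ii): the bound $L^{\mathcal{O}}_S\leq \ell^\Lambda_{S,\infty}$, combined with $\bar P_S\leq \mathrm{const}\cdot(|\tilde P_S|+|m|)\mathrm{e}^{-rS}$ and $Y_1\in \mathrm{L}^2(\WM)$, supplies the needed integrability.

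The explicit form~\eqref{Main:87} is then obtained by a case analysis paralleling the proof of Theorem~\ref{Main:41}. The L\'evy and strong Markov structure of $\tilde P$ combined with~\eqref{Main:28} reduces the essential infimum~\eqref{Main:24} to a deterministic optimization $L^{\mathcal{O}}_S=\bar\ell(\tilde P_S,\Delta \tilde P_S)$ over $\FA$-stopping times of the form $T(\gamma)$ from~\eqref{Main:1}. Cases~1 and~2 of~\eqref{Main:87} reproduce the predictable formulas of Theorem~\ref{Main:18} via Corollary~\ref{Main:93}; cases~3 and~4 mirror the corresponding cases of Theorem~\ref{Main:41}, simplified because only a single threshold is active (the no-jump threshold $\gamma^0$ from~\eqref{Main:69} collapses when $p(\eta)=0$, so the optimization reduces to a scalar one over $\gamma$ in~\eqref{Main:86}). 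The delicate point, and main obstacle, is case~5 at $\tilde P_t<m\lambda/r$ with $\Delta \tilde P_t=0$: here the essential infimum in~\eqref{Main:24} is genuinely unbounded below, reflecting that intervening at such a time is strictly worse than waiting for the next Poisson jump; this is encoded by the convention $L^{\mathcal{O}}_t=-\infty$, which the running supremum in $C^{\mathcal{O}}$ simply ignores. The fact that this entails no loss of optimality is precisely what the value equivalence proven in the first step asserts: it is without loss never to intervene on the bad set $\{\bar P>P\}$, so the $-\infty$ entries are harmless and $C^{\mathcal{O}}$ is optional by its explicit dependence on $\tilde P$.
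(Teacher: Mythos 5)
Your overall architecture (pass to $\bar P$, invoke Lemma~\ref{Main:19} and Theorem~\ref{Main:14}, then compute the representation process explicitly via the L\'evy structure) is the paper's, and your deferral construction for~\eqref{Main:50} is a workable alternative to the paper's shortcut: the paper never proves~\eqref{Main:50} separately, since $P\le\bar P$ gives one inequality and the other follows once one observes that $C^{\mathcal O}$ only increases where $\bar P=P$, whence $V(C^{\mathcal O})=\bar V(C^{\mathcal O})$. Note that it is this last observation, not the value identity itself, that lets you pass from optimality for $\bar V$ to optimality for~\eqref{Main:8}; your ``and thereby for~\eqref{Main:8}'' silently uses it.

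The genuine gap is case~5. You assert that $L^{\mathcal O}$ of~\eqref{Main:87} is the maximal solution produced by Lemma~\ref{Main:19} for $\bar P$ and that the essential infimum in~\eqref{Main:24} is ``genuinely unbounded below'' on $\{\tilde P_t<m\frac{\lambda}{r},\ \Delta\tilde P_t=0\}$. This is false: $\bar P$ satisfies Assumption~\ref{Main:6} and the maximal solution is real-valued there. Indeed, for stopping times $T>S$ with $\EW[\md R([S,T))\mid\FA_S]$ small one has $T\le T^1_S$ (see~\eqref{Main:54}) with high probability, and on that event $\bar P_S-\EW[\bar P_T\mid\FA_S]$ is asymptotically nonnegative precisely because $\tilde P_S<m\frac{\lambda}{r}$ makes waiting for the next jump favourable, while for all other $T$ the quotient is bounded since $\sup_t \mathrm{e}^{-rt}|\tilde P_t|$ is integrable; hence the infimum is finite. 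The paper's remark after the theorem says explicitly that $L^{\mathcal O}$ is in general \emph{not} the maximal solution: it is obtained by taking the maximal solution $\bar L^{\mathcal O}$ (finite, and identified with the stated formulas only on $\{\bar P=P\}$) and replacing it by $-\infty$ on $\{\bar P>P\}$. The substantive step missing from your proposal is the verification that this lowered process still solves~\eqref{Main:33} and~\eqref{Main:13}: using $\md R([S,T^1_S))=0$, $\md R(\{T^1_S\})>0$, $\bar P_S=\EW[P_{T^1_S}\mid\FA_S]$ from~\eqref{Main:67} and the representation identity at $S$ and at $T^1_S$, one shows $\sup_{v\in[S,T^1_S]}\bar L^{\mathcal O}_v=\bar L^{\mathcal O}_{T^1_S}$ a.s.\ on $\{\bar P_S>P_S\}$, so setting the value to $-\infty$ there does not affect the integrand $\frac{\partial}{\partial c}\rho_t\bigl(\sup_{v\in[S,t]}L_v\bigr)$ for $\md R$-a.e.\ $t$. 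Without this step you are left with the control built from the maximal solution, which may increase on $\{\bar P>P\}$; that control is still optimal for $\bar V$, but its $V$-value can be strictly smaller, so neither the explicit form~\eqref{Main:87} nor the optimality of $C^{\mathcal O}$ for the original problem~\eqref{Main:8} would follow from your argument as written.
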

\begin{Rem}
Notice, that the process $L^\mathcal{O}$ of~\eqref{Main:87} is a solution to~\eqref{Main:33} and~\eqref{Main:13} with $P$ replaced by $\bar{P}$, but it is \emph{not necessarily the maximal one} with~\eqref{Main:24}. This is without harm for our claim of optimality because for an application of Theorem~\ref{Main:14} we can use \emph{any} solution, not necessarily the maximal one.
\end{Rem}

\begin{proof}
   Analogously to Lemma~\ref{Main:11} one can show that we only have to establish the explicit form of $L^{\mathcal{O}}$ to prove that $C^{\mathcal{O}}$ attains the supremum over $\bar{V}$. Moreover, one can see that, with $L^{\mathcal{O}}$ of the given form,
    $C^{\mathcal{O}}$ only increases, when $\Delta \Tilde{P}>0$ or $\Tilde{P}\geq m\frac{\lambda}{r}$. This shows
$V(C^{\mathcal{O}})=\Bar{V}(C^{\mathcal{O}})$, which also establishes~\eqref{Main:50} since $P\leq\Bar{P}$. It thus suffices to establish the stated characterization of $L^{\mathcal{O}}$. For this denote by $\bar{L}^{\mathcal{O}}$ the maximal solution to~\eqref{Main:33} and~\eqref{Main:13} (for $\bar{P}$ instead of $P$) which then satisfies~\eqref{Main:24}, i.e.
\begin{align}\label{Main:76}
   \bar{L}^{\mathcal{O}}_S= \essinf_{S< T\in \st} \frac{\bar{P}_S-\EW[\bar{P}_T|\FA_S]}{\EW[\md R([S,T))|\FA_S]},
    \quad S\in \st.
\end{align}
Let $S\in \st$. We will argue now why for any $T\in \stm$ with $T> S$ we can replace $\EW[\bar{P}_T|\FA_S]$ by $\EW[P_T|\FA_S]$ in~\eqref{Main:76}. 
By definition of $\bar{P}$ we have, that $\EW[\bar{P}_T|\FA_S]\leq \EW[\bar{P}_{\hat{T}}|\FA_S]$ for $\hat{T}$ given by $\hat{T}:=T_T^1$ (see~\eqref{Main:54}) on $\{\bar{P}_T>P_T\}$ and $\hat{T}:=T$ else. We observe that $\md R([S,T))=\md R([S,\hat{T}))$ and $\bar{P}_{\hat{T}}=P_{\hat{T}}$. Hence, 
\begin{align}
   \frac{\bar{P}_S-\EW[\bar{P}_T|\FA_S]}{\EW[\md R([S,T))|\FA_S]}
   \geq \frac{\bar{P}_S-\EW[\bar{P}_{\hat{T}}|\FA_S]}{\EW[\md R([S,\hat{T}))|\FA_S]}
   =\frac{\bar{P}_S-\EW[P_{\hat{T}}|\FA_S]}{\EW[\md R([S,\hat{T}))|\FA_S]},
\end{align}
which shows our claim.
Now, we can establish analogously to Lemma~\ref{Main:11} (ii) that
\begin{align}\label{Main:90}
		    \bar{L}_t^{\mathcal{O}}=\bar{\ell}({\tilde{P}_t},\Delta  N_t),
		    \quad  t\in [0,\infty),
		\end{align}
	where $\bar{\ell}(p,\Delta):=\inf_{0<T\in \mathcal{S}}\bar{\ell}_T(p,\Delta)$
	and, for random times $T>0$,
    \begin{align}
        \bar{\ell}_T(p,\Delta):=
       \frac{\frac{\lambda}{\lambda+r}(p+m) \mathbb{1}_{\{p<m\frac{\lambda}{r},\Delta=0\}}+p\mathbb{1}_{\{p<m\frac{\lambda}{r},\Delta=0\}^c}-
    			\EW\left[\mathrm{e}^{-rT}\right]p
    			-\EW\left[\mathrm{e}^{-rT}	\sum_{k=1}^{N_T}Y_k \right]}{\EW\left[R_{T-}
    			\right]+\Delta}
    \end{align}
	  with the convention $\frac{\cdot}{0}=\infty$. For $p>m\frac{\lambda}{r}$ or $p<m\frac{\lambda}{r}$ and $\Delta = 1$ one can establish analogous results to Proposition~\ref{Main:128}, Lemma~\ref{Main:119} and deduce that  $\bar{L}^{\mathcal{O}}$ is equal to $L^{\mathcal{O}}$ given in~\eqref{Main:87} on the set $\{\bar{P}=P\}$. Note that 
	  for $\gamma_0\geq 0$ we have $T(\gamma^0,0)=T^0$,
	  which explains the explicit form of $L^{\mathcal{O}}$ in the case 
	  $\tilde{P}<b$ and $\Delta N>0$.
 On $\{\bar{P}_S>P_S\}=\{N_S=N_{S-}\}\cap\{P_S<b\}$
we have by~\eqref{Main:67} that $\bar{P}_S=\EW[P_{T_S^1}|\FA_S]$. As $\bar{L}^{\mathcal{O}}$ satisfies~\eqref{Main:13} we get with the definition
of $\bar{P}$, that 
\begin{align}
    \EW\left[\int_{[T_S^1,\infty)}
			\frac{\partial}{\partial c}\rho_t\left(
					\sup_{v\in [S,t]} \bar{L}^{\mathcal{O}}_v\right) \md R_t
					\midG \FA_{S}\right]&=\EW\left[\int_{[S,\infty)}
			\frac{\partial}{\partial c}\rho_t\left(
					\sup_{v\in [S,t]} \bar{L}^{\mathcal{O}}_v\right) \md R_t
					\midG \FA_{S}\right]\\	
	&=\bar{P}_S\overset{\eqref{Main:67}}{=}\EW[P_{T_S^1}|\FA_S]
	=\EW[\bar{P}_{T_S^1}|\FA_S]\\
	&=\EW\left[\int_{[T_S^1,\infty)}
			\frac{\partial}{\partial c}\rho_t\left(
					\sup_{v\in [T_S^1,t]} \bar{L}^{\mathcal{O}}_v\right) \md R_t
					\midG \FA_{S}\right],
\end{align}
where we have used that $\md R([S,T_S^1))=0$. Hence, as $\md R(\{T_S^1\})>0$ on $\{S<\infty\}$ we obtain
almost surely $\sup_{v\in [S,T_S^1]} \bar{L}^{\mathcal{O}}_v
    =\bar{L}^{\mathcal{O}}_{T_S^1}$ on $\{S<\infty\}$.
Therefore we can replace $\bar{L}^{\mathcal{O}}$ by $-\infty$ on $\{\bar{P}>P\}=\{N=N_{-}\}\cap\{P<b\}$ to obtain
another solution to~\eqref{Main:33}
and~\eqref{Main:13}, which is exactly $L^{\mathcal{O}}$ of~\eqref{Main:87}.
\end{proof}

In analogy to the predictable case,
the next corollary shows how the minimal storage level $L^{\Lambda^\eta}$ approaches
the minimal storage level under full immediate information $L^{\mathcal{O}}$ when the sensor's probability to fail tends to 0:
\begin{Cor}
	In the setting of Theorem~\ref{Main:41},
                consider a sequence $(\eta_n)_{n\in
                  \NZ}\subset [0,\infty]$ such that $\lim_n
                p(\eta_n)= 0$. Then the solution $L^{\Lambda^{\eta_n}}$, $n\in \NZ$, converges to $L^{\mathcal{O}}$ (see Theorem~\ref{Main:39}) for $n\rightarrow \infty$:
		\begin{align*}
			 	\lim_{n \rightarrow \infty} L^{\Lambda^{\eta_n}}_t(\omega)
		=L^{\mathcal{O}}_t(\omega),\quad  t \in
                        [0,\infty),\quad  \omega \in \Omega.
\end{align*}
\end{Cor}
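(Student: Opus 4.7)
The plan is to mimic the proof of Corollary~\ref{Main:65}, exploiting the explicit characterizations of $L^{\Lambda^{\eta_n}}$ from Theorem~\ref{Main:41} and of $L^{\mathcal{O}}$ from Theorem~\ref{Main:39}, and carrying out a case analysis on the values of $\tilde{P}_t(\omega)$ and $\Delta\tilde{P}_t(\omega)$. First I would observe that, since $\WM(Y_1=0)=0$, the hypothesis $\lim_n p(\eta_n)=0$ forces $\eta_n\to 0$: indeed, any subsequence bounded below by some $\eta_0>0$ would yield $p(\eta_{n_k})\geq \WM(|Y_1|<\eta_0)>0$. Consequently, for each fixed $(\omega,t)$ and all $n$ large enough, one has $\tilde{P}^{\eta_n}_t(\omega)=\tilde{P}_t(\omega)$ and ${^{\Lambda^{\eta_n}}(\Delta N)}_t(\omega)=\Delta N_t(\omega)$, so the task reduces to comparing the explicit formulas~\eqref{Main:57} and~\eqref{Main:87} case by case.

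When $\tilde{P}_t(\omega)\geq b$, the respective cases of~\eqref{Main:57} and~\eqref{Main:87} yield identical values ($0$ or $\frac{r}{\lambda}(\tilde{P}_t-b)$), making convergence trivial. When $\tilde{P}_t(\omega)<b$ and $\Delta\tilde{P}_t(\omega)\neq 0$, $L^{\Lambda^{\eta_n}}_t$ is the infimum of $f^{\eta_n}_1(\gamma^0,0,\tilde{P}_t)$ over $\gamma^0\in(0,B_0^{\eta_n}(b-\tilde{P}_t))$. I would first check, via~\eqref{Main:105}, that $B_0^{\eta_n}$ tends to a strictly positive limit, and then argue that the stopping times $T^{\eta_n}(\gamma^0,0)$ of~\eqref{Main:62} converge $\WM$-almost surely to the hitting time $T(\gamma^0)$ of~\eqref{Main:1}, because $\mathbb{1}_{\{|\Delta\tilde{P}_s|\geq \eta_n\}}\to 1$ on the jump set of $N$ as $\eta_n\to 0$ (using $\WM(Y_1=0)=0$). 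Dominated convergence would then give pointwise convergence $f^{\eta_n}_1(\gamma^0,0,\tilde{P}_t)\to f(\gamma^0,\tilde{P}_t)$ from~\eqref{Main:86}, and a monotonicity-and-tightness argument in the spirit of the proof of Corollary~\ref{Main:65} would upgrade this to convergence of the infima to the explicit value stated in case~3 of~\eqref{Main:87}.

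The remaining subcase $\tilde{P}_t(\omega)<b$, $\Delta\tilde{P}_t(\omega)=0$ is handled similarly: case~4 of~\eqref{Main:57} expresses $L^{\Lambda^{\eta_n}}_t$ as the infimum of $f^{\eta_n}_0(0,\gamma^1,\tilde{P}_t)$ over $\gamma^1\in(-B_1^{\eta_n}(b-\tilde{P}_t),0)$, and by~\eqref{Main:105} we have $B_1^{\eta_n}\to\infty$, so the admissible range for $\gamma^1$ exhausts $(-\infty,0)$. For $\tilde{P}_t\geq m\lambda/r$ the same dominated convergence would identify the limit with $\inf_{\gamma\in(-\infty,0)} f(\gamma,\tilde{P}_t)$, matching case~4 of~\eqref{Main:87}. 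The main obstacle I anticipate is the final case $\tilde{P}_t<m\lambda/r$, where~\eqref{Main:87} gives $L^{\mathcal{O}}_t=-\infty$: here I would need to produce, for each $M>0$, an admissible $\gamma^1_n$ with $f^{\eta_n}_0(0,\gamma^1_n,\tilde{P}_t)<-M$ eventually. The intended construction exploits that the denominator in~\eqref{Main:16} can be forced to vanish in the limit by picking $T^{\eta_n}(0,\gamma^1_n)$ so as to mimic the infinite optimal optional waiting time from Proposition~\ref{Main:42}, while the numerator stays bounded above by a strictly negative quantity, precisely because the region $\tilde{P}_t<m\lambda/r$ is, by that same proposition, the one where immediate action is strictly suboptimal.
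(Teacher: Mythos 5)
Your overall strategy (pointwise case analysis on $\tilde P_t(\omega)$, $\Delta\tilde P_t(\omega)$, passing to limits in the explicit formulas of Theorem~\ref{Main:41}) is the same as the paper's, and your treatment of the cases $\tilde P_t\geq b$ and $m\frac{\lambda}{r}\leq\tilde P_t<b$, $\Delta\tilde P_t=0$ (limit of $f^{\eta_n}_0(0,\gamma,\cdot)$ equal to $f(\gamma,\cdot)$, range of $\gamma^1$ exhausting $(-\infty,0)$) matches the paper's argument around~\eqref{Main:94}. The genuine gap is in the detected-jump case $\tilde P_t<b$, $\Delta\tilde P_t\neq 0$. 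There you claim $T^{\eta_n}(\gamma^0,0)\to T(\gamma^0)$ and hence $f^{\eta_n}_1(\gamma^0,0,\tilde P_t)\to f(\gamma^0,\tilde P_t)$; both identifications are wrong. In~\eqref{Main:62} the first argument $\gamma^0$ only governs the \emph{undetected} branch, which disappears as detection becomes perfect, so $T^{\eta_n}(\gamma^0,0)\to T(0)=T^0$ from~\eqref{Main:106}, independently of $\gamma^0$; moreover the ``$+\Delta$'' with $\Delta=1$ in the denominator of~\eqref{Main:16} (the current, detected jump charging the risk clock) does not vanish in the limit, whereas $f$ in~\eqref{Main:86} has no such offset. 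The correct limit of $f^{\eta_n}_1(\gamma^0,0,p)$ is the constant $\frac{(1-\delta)(p-b)}{\frac{\lambda}{r}(1-\delta)+1-\delta}=\frac{r}{\lambda+r}(p-b)$, which is exactly the explicit value in the corresponding case of~\eqref{Main:87}; your route via $\inf_{\gamma^0}f(\gamma^0,\cdot)$ has no basis (that case of~\eqref{Main:87} is not an infimum of $f$) and, as written, would not produce the right value. The paper avoids $f^\eta_1$ here altogether: it gets the lower bound $L^{\mathcal O}_t\leq L^{\Lambda^{\eta_n}}_t$ from~\eqref{Main:142} and~\eqref{Main:90}, and the upper bound by plugging the competitor times $T^k_n=(T^0)_{\{|\Delta\tilde P_{T^0}|\geq\eta_n\}}\wedge(T^0+\tfrac1k)$ into $\ell_T(\cdot,1)$ of~\eqref{Main:121}.

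Two further points. First, your opening claim that $\lim_n p(\eta_n)=0$ forces $\eta_n\to 0$ is false: if, say, $|Y_1|\geq 1$ almost surely, then $p(\eta)=0$ for all $\eta\leq 1$ and one may take $\eta_n\equiv 1$; the inequality $p(\eta_{n_k})\geq\WM(|Y_1|<\eta_0)$ does not give positivity. What is true (and suffices) is that $\limsup_n\eta_n$ is at most the essential infimum of $|Y_1|$, so each fixed nonzero jump is almost surely detected for $n$ large; this needs a small argument of its own. Second, in the case $\tilde P_t<m\frac{\lambda}{r}$, $\Delta\tilde P_t=0$ your mechanism (denominator forced to $0$, numerator bounded away from $0$ and negative) is the right one, but the description ``mimic the infinite optimal optional waiting time'' is off: the relevant times are $T^{\eta_n}(0,\gamma_n)$ with $\gamma_n\to-\infty$, which converge to the \emph{first jump time} $T^1$ of $N$ (before which $\md R$ carries no mass and whose own, detected, clock charge is subtracted), exactly as in the paper; this step is only sketched in your proposal and would need to be carried out, including the admissibility $\gamma_n>-B_1^{\eta_n}(b-\tilde P_t)$. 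Finally, note that the lower bound $L^{\mathcal O}_t\leq L^{\Lambda^{\eta_n}}_t$ is not a plain monotonicity of infima over stopping-time classes, since $L^{\mathcal O}$ in Theorem~\ref{Main:39} solves the representation for the modified reward $\bar P$, not for $P$; your appeal to ``monotonicity-and-tightness in the spirit of Corollary~\ref{Main:65}'' glosses over this and should be replaced by the comparison the paper draws from~\eqref{Main:142} and~\eqref{Main:90}.
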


\begin{proof}
    In the case $\tilde{P}_t(\omega)\geq b$ we have nothing to show. Assume now 
    $\tilde{P}_t(\omega)< b$ and $\Delta N_t(\omega)>0$. Using~\eqref{Main:142} and~\eqref{Main:90} shows $L_t^{\mathcal{O}}(\omega)\leq L_t^{\Lambda^\eta}(\omega)$ for all $\eta\in [0,\infty]$ with $\eta\leq |\Delta \tilde{P}_t(\omega)|$. Moreover, for $n\in \NZ$ set $T_n^k:=(T^0)_{\{|\Delta \tilde{P}_T^0|\geq \eta_n\}}\wedge (T^0+\frac1k)$, $k\in \NZ$, where $T^0$ is defined in~\eqref{Main:106}. Then we obtain with $\delta$ as in~\eqref{Main:106} that
    \begin{align}
        \lim_{n\rightarrow \infty}L_t^{\Lambda^{\eta_n}}(\omega)&\leq\lim_{n\rightarrow \infty} \lim_{k\rightarrow \infty}\ell_{T_n^k}(\tilde{P}_t(\omega),1)\\
       & =\lim_{n\rightarrow \infty}\frac{(1-\delta)(\tilde{P}_t(\omega)-b)}{\EW[R_{T^0-}]+1+\EW\left[\mathrm{e}^{-rT^0}\mathbb{1}_{\{|\Delta \tilde{P}_{T_0}|<\eta_n\}}\right]}=
        \frac{r}{r+\lambda}(\tilde{P}_t(\omega)-b)=L_t^{\mathcal{O}}(\omega).
    \end{align}
    Next,  let $m\frac{\lambda}{r}\leq \tilde{P}_t(\omega)< b$ and $\Delta N_t(\omega)=0$. First, $L_t^{\mathcal{O}}(\omega)\leq L_t^{\Lambda^\eta}(\omega)$ for all $\eta\in [0,\infty)$ follows as before and we claim that for any $\gamma<0$ we have for $f$ from~\eqref{Main:86} that
    \begin{align}\label{Main:94}
        f(\gamma,p)
        =\lim_{n\rightarrow \infty} f^{\eta_n}(0,\gamma,p).
    \end{align}
    Indeed,  one can see that for $\gamma<0$ we have $T(\gamma)\leq T^{\eta_n}(0,\gamma)$ for any $n\in \NZ$, where $T(\gamma)$ is given by~\eqref{Main:1}. Moreover, as $\{T(\gamma)<T^{\eta_n}(0,\gamma)\}\subset \{|\Delta \tilde{P}_{T(\gamma)}|<\eta_n\}$ we see that $\lim_{n\rightarrow \infty} \WM(T^{\eta_n}(0,\gamma)>T(\gamma))=0$. 
    The rest follows now by Lebesgue's theorem.
    As $B_1^{\eta_n}$ of~\eqref{Main:105} decreases to $-\infty$ for $p(\eta_n)$ going to zero, we can choose $\eta_n$ small enough to ensure that $\gamma>B_1^{\eta_n}(b-\tilde{P}_t(\omega)$. Therefore, we obtain with~\eqref{Main:94} that
    \begin{align*}
        f(\gamma,\tilde{P}_t(\omega))
        =\lim_{n\rightarrow \infty} f^{\eta_n}(0,\gamma,\tilde{P}_t(\omega))
        \geq \lim_{n\rightarrow \infty} L^{\eta_n}_t(\omega),
    \end{align*}
    which shows $L^{\mathcal{O}}_t=\inf\limits_{\gamma <0} 
				f(\gamma,\tilde{P}_t(\omega))\geq \lim_{n\rightarrow \infty} L^{\eta_n}_t(\omega)$.
				
Finally let $p:=\tilde{P}_t(\omega)<m\frac{\lambda}{r}$ and $\Delta N_t(\omega)=0$. One can see that $B_1^{\eta}$ from~\eqref{Main:105} converges to $-\infty$. Moreover, for $(p(\eta_n))_{n\in \NZ}$ going to zero and $(\gamma_n)_{n\in \NZ}$ going to $-\infty$ we get that $(T^{\eta_n}(0,\gamma_n))_{n\in \NZ}$ is converging to $T^1$, which denotes the first jump of $N$. Hence,
\[
    \lim_{n\rightarrow \infty}f^{\eta_n}(0,\gamma_n,p)
    =\frac{\left(1-
			\EW\left[\mathrm{e}^{-rT^1}\right]\right)p
			-\EW\left[\mathrm{e}^{-rT^1} 
			Y_1\right]}{
			\frac{\lambda}{r}\left(1-
			\EW\left[\mathrm{e}^{-rT^1}\right]\right)
			-\EW\left[\mathrm{e}^{-rT^1}\right] }
	=\frac{\frac{r}{\lambda+r}(p-m\frac{\lambda}{r})}{0}=-\infty,
\]
where the last equities are meant as a limiting procedure as the nominator converges to a strictly negative number and the denominator to zero.
Hence, we obtain our result as $\lim_{n\rightarrow \infty}L^{\Lambda^{\eta_n}}_t(\omega)=\lim_{n\rightarrow \infty}\inf_{\gamma\in(B_1^{\eta_n}(b-p),0)}f^{\eta_n}_0(0,\gamma,p)=-\infty$.

\end{proof}

	\subsection{Illustration}\label{sec:ill}

For a detailed illustration of Theorem~\ref{Main:18}, Theorem~\ref{Main:41} and Theorem~\ref{Main:39}, let us fix $\lambda=\frac12$, let $Y_1$ be the bi-modal distribution $\frac12(\mathcal{N}(-3,2)+ \mathcal{N}(6,2))$ and choose $\tilde{p}=-10$ and $r=1$, which gives $m=1.5$ and $b \approx 1.37642$. 

To obtain $L^{\Lambda^\eta}$, $\eta\in [0,\infty]$, we need to calculate numerically $f^\eta_\Delta$, $\Delta\in \{0,1\},$ from~\eqref{Main:16}, which can be done via Monte Carlo ($10^5$ samples). Figure~\ref{figure:6} plots a trajectory for the expected undiscounted reward process $\tilde{P}^\eta$ (gray) with its critical level $b$ (dotted gray) along with the optimal Meyer-measurable controls for $\eta=3$, $\eta=6$,  predictable ($\eta=\infty$) control, the optional ($\eta=0$) one, all starting in $\uP=-12$. 
		\begin{figure}
			\centering
				\includegraphics[width=0.32\textwidth]{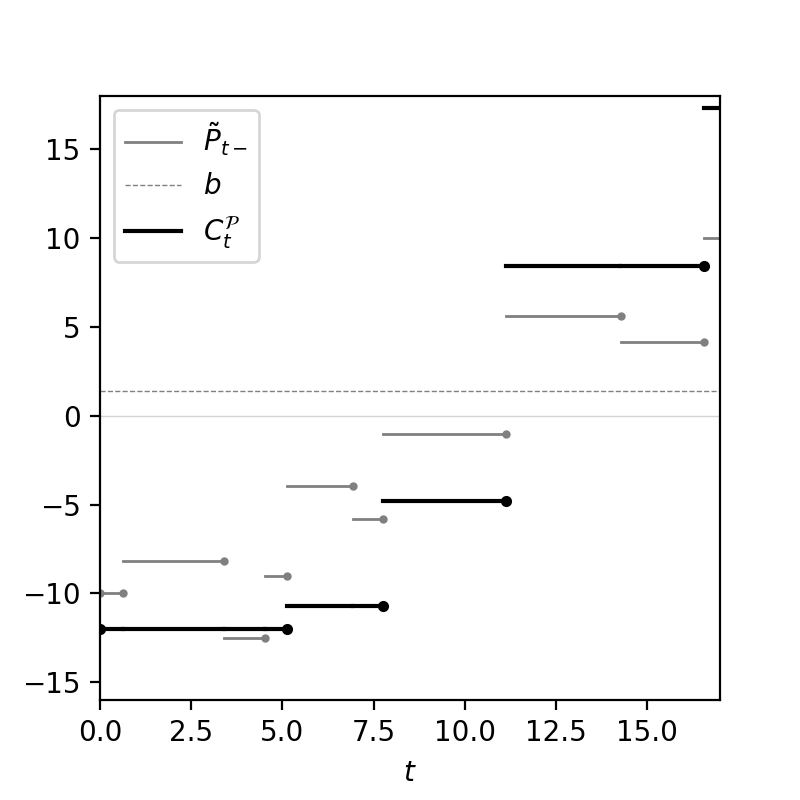}
			\includegraphics[width=0.32\textwidth]{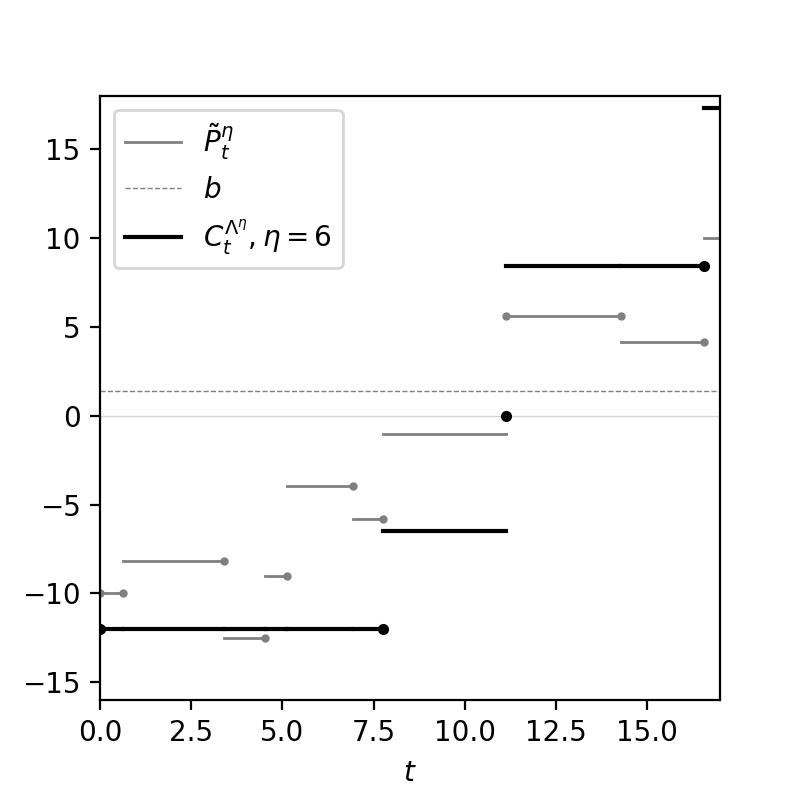}\\
				\includegraphics[width=0.32\textwidth]{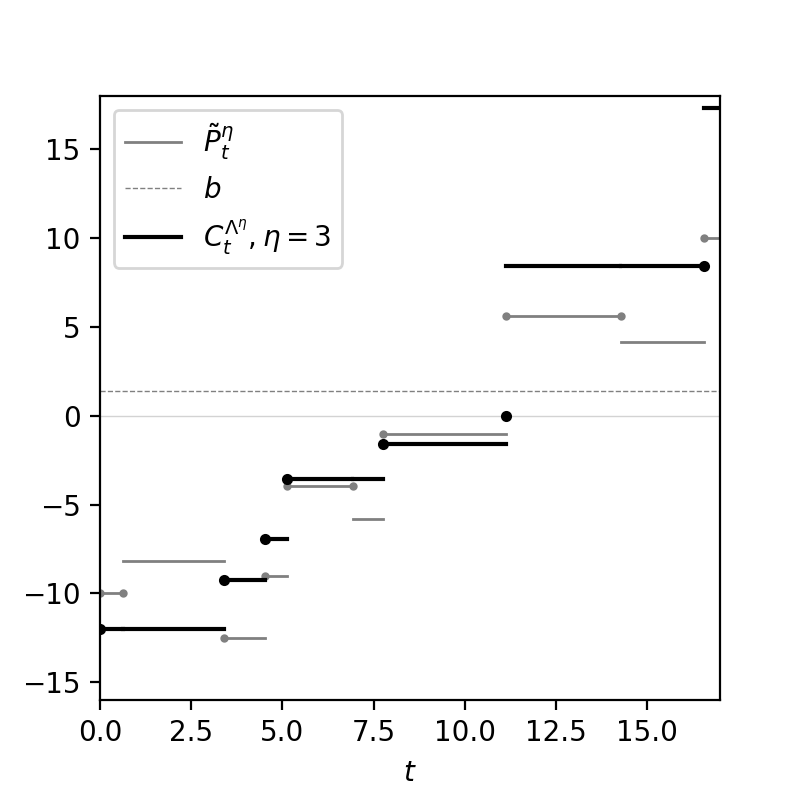}
				\includegraphics[width=0.32\textwidth]{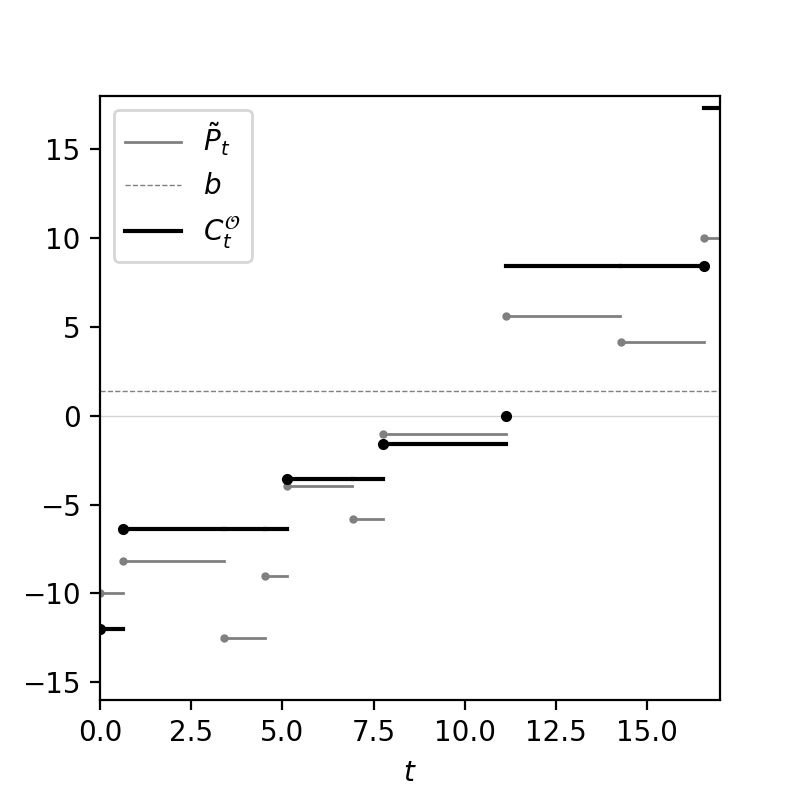}
			
			\caption{\small $\tilde{P}^\eta$ (gray), $b$ (dotted gray) and optimal controls for $\eta=0$ (optional, black), $\eta=3$, $\eta=6$ (black) and $\eta=\infty$ (predictable, black). The dots indicate the processes' value at their jump times.}
			\label{figure:6}
		\end{figure}
		Observe first of all that both of the Meyer controls fail to detect some jumps of $\tilde{P}$ (and $P$) immediately as they happen. The Meyer control with $\eta=6$ also does not adjust its level after the jump number four, despite the higher rewards
		obtainable then. So the controller here gambles on her ability to detect future risk shocks in time to benefit then from even higher rewards and the risk reduction. In fact, after the sixth jump of the reward process in this scenario, the accumulated value of the undetected jumps of $\tilde{P}$ is finally high enough to make her adjust her position. The predictable controller with no warnings about jumps can only adopt her position after the reward process has changed. This leads to a left-continuous optimal control in the predictable case. Moreover, in this case the position is increased whenever $\tilde{P}$ reaches a new all-time high, the only exception being the first jump at time $t_1\approx 0.5$, because the endogenously given starting position $c_0=-12$ is higher than the minimal position a predictable trader would tolerate, which is approximately $L_{t_1}^{\mathcal{P}}\approx  -19.12$. By contrast, the optional controller with perfect sensor can always intervene when rewards increase and she chooses to do so before the risk clock rings whenever the reward process $\tilde{P}$ is below the critical threshold $b$; with rewards beyond $b$, it is optimal for her and all the other controllers to only react to jumps after they have happened.
		One point of special interest is the moment $T_b$ where $\tilde{P}^\eta$ passes the critical value $b$ for the first time. The optional controller intervenes proactively to eliminate all risk ($C_{T_b}=0$) and is reacting once more after the jump to profit from the newly available high rewards.
	    Note that also the Meyer controllers with $\eta\in\{3,6\}$ act here in a similar way because the jump in this scenario happens to be larger than their respective detection thresholds and so the controllers become aware of this jump in the moment when it occurs. Hence, one can see that	 neither the optimal control in the Meyer case nor the optimal control in the
		optional case is in general left- or right-continuous; they are both just l\`adl\`ag, which illustrates the necessity of the general framework chosen in Section~\ref{sec:1.2}. Let us observe also that at the time of the fourth jump, both the predictable and the optional controller intervene while one of the Meyer controllers ($\eta=6$) abstains; hence, the Meyer controls cannot be viewed as simple interpolations between those two extreme cases. 
		
       We can use Monte Carlo simulation also to compute the value $v(\eta)$ of an optimal control depending on $\eta$ for the fixed initial values $p$ and $c_0$ set above, see Figure~\ref{figure:8}.
       Obviously, $v(\eta)$ is decreasing in the detection threshold $\eta$, its maximum $v(0)\approx -22$ corresponding to the optional case with perfect sensor considerably exceeding its minimum $v(\infty)\approx -33$ without sensor. In between these extremes, we can see the value function to be concave for ``small'' and convex for ``large'' values of $\eta$. The switching point between these regimes is around $\eta\approx 6.5$ which marks the detection threshold where small improvements of the sensor will be most effective. By contrast, the same small improvements will have very little effect when $\eta$ is small (as most jumps will be detected anyhow) and $\eta$ is large (when only few jumps will be large enough to be detected). 
        \begin{figure}[h]
			\centering
			\includegraphics[width=0.6\textwidth]{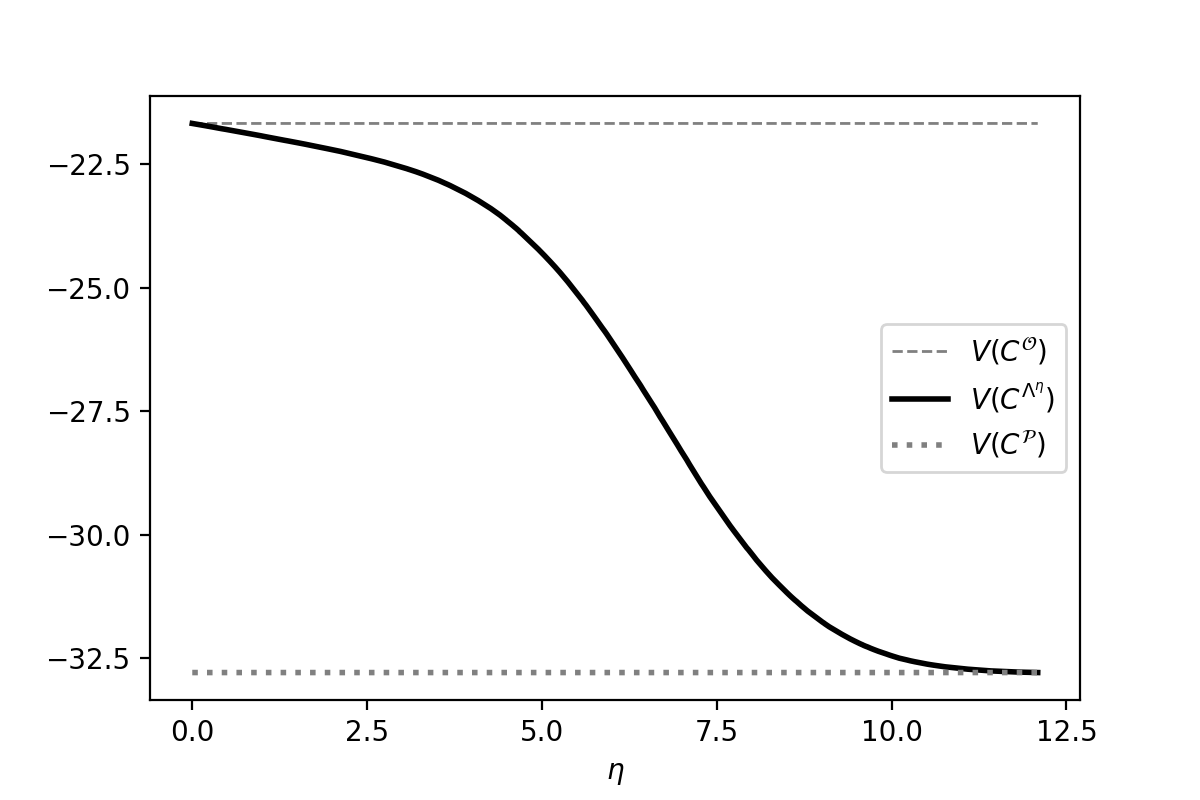}
			
			\caption{\small Value of an optimal control for $\eta$ between $0$ and 12 in our numerical example.}
			\label{figure:8}
		\end{figure}

\begin{appendix}
\section{Properties of the integral for l\`adl\`ag integrators}
\label{app:integration}

We will prove some results for the  
$\mdS$-integral defined in~\eqref{Main:4}, which
are well known for classical integrals and still
valid for such $\mdS$-integrals. But before doing this we will start with
a motivation for our definition of this integral and we close this
chapter
with a comparison to similar
definitions in the literature.

For the rest of this section fix a filtered probability space $(\Omega,\mathbb{F}, 
\FA:=(\FA_t)_{t\geq 0},\WM)$ with $\mathbb{F}:=\FA_\infty:=\bigvee_{t} \FA_t$ and $\FA$ fulfills the usual
conditions. 
Furthermore, we consider a $\WM$-complete
Meyer-$\sigma$-field $\Lambda$ (see Definition and Theorem~\ref{Main:107}), 
such that $\mathcal{P}(\mathcal{F})\subset \Lambda\subset
\mathcal{O}(\mathcal{F})$, where $\mathcal{P}(\FA)$ and
$\mathcal{O}(\FA)$ denote, respectively, the predictable and the
optional $\sigma$-field associated with $\FA$.

\paragraph{Motivation of $\mdS$-integrals.}

	Apart from the approximation argument for the $\mdS$-integral in the proof of Proposition~\ref{Main:307}, one can motivate our $\mdS$-integral
	by showing that optimizing our $\mdS$-integrals is equal to an optimal stopping problem over
	divided stopping times. This can be viewed as a version of a result of \cite{Bis79} for divided stopping times instead of ordinary stopping times. Divided stopping times have been introduced
	in \cite{EK81}:
	\begin{Def}[\citing{EK81}{Definition 2.37}{136-137}]
	A given quadruple $\tau:=(T,H^-,H,H^+)$ is called a \emph{divided
	stopping time}, if $T$ is an $\mathcal{F}$-stopping time
	and $H^-,H,H^+$ build a partition 
	of $\Omega$ such that
	\begin{enumerate}[label=(\roman*)]
		\item $H^-\in \mathcal{F}_{T-}$ and $H^-\cap \{T=0\}=\emptyset$,
		\item $H\in \mathcal{F}^\Lambda_T$,
		\item $H^+ \in \mathcal{F}_{T}^\Lambda$ and $H^+\cap 
		\{T=\infty\}=\emptyset$,
		\item $T_{H^-}$ is an $\mathcal{F}$-predictable 
		stopping time,
		\item $T_{H}$ is a $\Lambda$-stopping time.
	\end{enumerate}
	The set of all divided stopping times $\tau$ will be denoted 
	as $\stmd$.			
	For a $\Lambda$-measurable positive process $Z$
	we define the value attained 
	at a divided stopping time $\tau=(T,H^-,H,H^+)$ as
	$Z_\tau:=\lsl{Z}_{T}\mathbb{1}_{H^-}+Z_T\mathbb{1}_{H}
		+\lsr{Z}{T}\mathbb{1}_{H^+}$, where $(\cdot)^\ast$ is defined in~\eqref{Main:5} and $^\ast(\cdot)$ is its analogue where we take the $\limsup$ from the left
		(see for example \BBC).
		
\end{Def}
	One key advantage of divided stopping times over ordinary stopping times is
	that under fairly mild conditions an optimal
	divided stopping time exists; see \citing{EK81}{Theorem 2.39}{138}. The following result shows that $\mdS$-integrals yield a convex relaxation of optimal stopping over divided stopping times that is analogous to \citing{Bis79}{Equation (2.1)}{938}:
	
	\begin{Thm}
	Denote by $\mathcal{C}$ the set of increasing, 
	$\Lambda$-measurable processes $C$ satisfying $C_{0-}:=0$
	and $C_t\leq 1$, $t\in [0,\infty)$. Furthermore, let $Z$ be a $\Lambda$-measurable nonnegative process of class($D^\Lambda$) with $Z_\infty=0$.
	Then
	\begin{align}\label{app:3}
		\sup_{\tau \in \stmd}
		\EW[Z_\tau]
		=\sup_{C \in \mathcal{C}} \EW\left[\int_{[0,\infty)}
		Z_s\mdS C_s\right],
	\end{align}
	and  there exists a divided stopping time 
	$\hat{\tau}:=(\hat{T},\hat{H}^-,\hat{H},\hat{H}^+)$ attaining the value on the left hand side of~\eqref{app:3}.
	
	If $Z$ is additionally left-upper-semicontinuous
		in expectation at every predictable $\FA$-stopping time
		(see Assumption~\ref{Main:6} (ii)) then
		\[
		    \hat{C}:=
	\mathbb{1}_{\stsetRO{\hat{\tau}}{\infty}}:= \begin{cases}
	                                                \mathbb{1}_{\stsetRO{\hat{T}}{\infty}}&\text{ on } \hat{H}^-\cup \hat{H},\\
	                                                \mathbb{1}_{\stsetO{\hat{T}}{\infty}}&\text{ on } (\hat{H}^-\cup \hat{H})^c,
	                                            \end{cases}
	\]
	will solve the optimal
	control problem, i.e
	\[
	    \sup_{C \in \mathcal{C}} \EW\left[\int_{[0,\infty)}
		Z_s\mdS C_s\right]=\EW\left[\int_{[0,\infty)}
		Z_s\mdS \hat{C}_s\right].
	\]
	\end{Thm}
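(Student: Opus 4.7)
The plan is to establish the two inequalities separately and then to verify the optimality of $\hat C$ under the semi-continuity hypothesis.

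For $\sup_{\tau \in \stmd} \EW[Z_\tau] \leq \sup_{C \in \mathcal{C}} \EW[\int_{[0,\infty)} Z_s \mdS C_s]$, I would fix $\tau = (T, H^-, H, H^+)\in\stmd$ and begin with the canonical candidate $C_\tau := \mathbb{1}_{\stsetRO{\tau}{\infty}}$, i.e.\ the control equal to $\mathbb{1}_{\stsetC{T}{\infty}}$ on $H^- \cup H$ and $\mathbb{1}_{\stsetO{T}{\infty}}$ on $H^+$. Its $\Lambda$-measurability inherits from the three structural properties of $\stmd$: $T_H \in \stm$ places $\stsetC{T_H}{\infty}$ in $\Lambda$, while the predictability of $T_{H^-}$ and the left-continuity of $\mathbb{1}_{\stsetO{T_{H^+}}{\infty}}$ at $T_{H^+}$ place the remaining indicators in $\mathcal{P}(\FA) \subset \Lambda$. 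A direct computation from Definition~\ref{Main:139} then yields
\begin{align*}
\int_{[0,\infty)} Z_s \mdS (C_\tau)_s = Z_T(\mathbb{1}_{H^-} + \mathbb{1}_H) + \lsr{Z}{T} \mathbb{1}_{H^+},
\end{align*}
which coincides with $Z_\tau$ everywhere except on $H^-$, where $Z_T$ appears instead of $\lsl{Z}_T$. To close this gap, I would replace the left jump at $T_{H^-}$ by left jumps at a sequence $T_n \in \stm$ drawn from an announcing sequence for $T_{H^-}$; applying the Meyer Section Theorem~\ref{Main:84} to sets of the form $\{(\omega, t) : t < T_{H^-}(\omega),\ Z_t(\omega) > \lsl{Z}_{T_{H^-}}(\omega) - 1/k\}$ I would refine the choice of $(T_n)$ so that $Z_{T_n}\mathbb{1}_{H^-}\to \lsl{Z}_{T_{H^-}}\mathbb{1}_{H^-}$ in probability, and the class($D^\Lambda$) property of $Z$ would transport this convergence to expectations, giving $\EW[\int Z_s \mdS (C_n)_s] \to \EW[Z_\tau]$.

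For the converse $\sup_{C \in \mathcal{C}} \EW[\int Z_s \mdS C_s] \leq \sup_{\tau \in \stmd} \EW[Z_\tau]$, the plan is to realize any $C \in \mathcal{C}$ as a randomization over divided stopping times. On an enlarged probability space carrying an independent uniform variable $U$ on $[0,1]$, introduce $\tau_U := \inf\{t : C_t \geq U\}$ together with sets $H^-_U, H_U, H^+_U$ indicating whether the threshold $U$ is crossed by the continuous part $\dc{C}$, a left jump $\Delta^- C$, or a right jump $\Delta^+ C$, respectively. For almost every realization of $U$ the quadruple $(\tau_U, H^-_U, H_U, H^+_U)$ will satisfy the properties of a divided stopping time of $\FA$, and Fubini combined with the jump/continuous decomposition of $\mdS C$ then delivers
\begin{align*}
\EW\left[\int_{[0,\infty)} Z_s \mdS C_s\right] = \int_0^1 \EW[Z_{\tau(u)}]\,du \leq \sup_{\tau \in \stmd} \EW[Z_\tau].
\end{align*}
The existence of a divided stopping time $\hat\tau$ attaining the left-hand supremum is furnished by \BBK.

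Under the additional left-upper-semicontinuity hypothesis, $\hat C := \mathbb{1}_{\stsetRO{\hat\tau}{\infty}}$ would be shown optimal by combining the computation from paragraph one with the semi-continuity: one has
\begin{align*}
\EW\left[\int_{[0,\infty)} Z_s \mdS \hat C_s\right] - \EW[Z_{\hat\tau}] = \EW\left[(Z_{\hat T} - \lsl{Z}_{\hat T})\mathbb{1}_{\hat H^-}\right],
\end{align*}
and left-upper-semicontinuity in expectation applied along an announcing sequence of the predictable time $T_{\hat H^-}$, together with the class($D^\Lambda$) property for exchanging $\limsup$ with $\EW$, forces this difference to be nonnegative. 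Combined with the equality of suprema already established, this identifies $\hat C$ as a maximizer. The hard part will be the randomization step in the reverse inequality: one must carefully verify that the sets $(H^-_U, H_U, H^+_U)$ produced from the threshold-crossing construction fit the measurability conditions in Definition~2.37 of \cite{EK81} for a.e.~$U$, which involves a delicate interplay between $\mathcal{F}_{T-}$, $\mathcal{F}_T$, and $\aFA_T$.
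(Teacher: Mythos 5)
Your overall architecture (approximate the optimal divided stopping time by controls for one inequality, represent an arbitrary control as a mixture of divided stopping times for the other) is reasonable, and your second inequality is a genuinely different route from the paper, which instead uses El Karoui's $\Lambda$-Snell envelope $\bar{Z}$ of $Z$, its Mertens-type decomposition $\bar{Z}=\bar{M}-\bar{A}$, and the projection inequality of Proposition~\ref{app:5} to bound $\EW[\int Z_s\,\mdS C_s]\leq\EW[\bar{A}_\infty]=\EW[\bar{Z}_0]=\sup_\tau\EW[Z_\tau]$. However, both of your directions have concrete gaps. In the direction $\sup_\tau\EW[Z_\tau]\leq\sup_C\EW[\int Z\,\mdS C]$, your perturbed controls are not admissible: on $\hat{H}^-$ you let the control jump at $T_n<T_{\hat H^-}$ ``on $\hat H^-$'', i.e.\ you use $\mathbb{1}_{\hat H^-}\mathbb{1}_{\stsetRO{T_n}{\infty}}$, but $\hat H^-$ is only $\FA_{\hat T-}$-measurable and is not known at time $T_n<\hat T$, so this process is not adapted, let alone $\Lambda$-measurable. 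Handling exactly this point is the core of the paper's proof: the pre-jump at $S_n$ must have the random size ${^\Lambda(\mathbb{1}_{\hat H^-})}_{S_n}$ (a $\Lambda$-projection evaluated at $S_n$), with the remaining mass added at $\hat T$, and the value is only recovered in the limit because ${^\Lambda(\mathbb{1}_{\hat H^-})}_{S_n}\to\mathbb{1}_{\hat H^-}$. In addition, your construction of the sequence $(T_n)$ applies the Meyer Section Theorem to sets of the form $\{t<T_{\hat H^-},\ Z_t>\lsl{Z}_{T_{\hat H^-}}-1/k\}$, which involve the anticipating random variable $\lsl{Z}_{T_{\hat H^-}}$ and therefore need not lie in $\Lambda$; the section theorem does not apply to them. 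The existence of $\Lambda$-stopping times $S_n<S$ with $Z_{S_n}\to\lsl{Z}_S$ is precisely the companion-paper result the paper cites (\BBG) and should be invoked rather than re-derived this way.

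In the reverse direction your layer-cake identity is false as stated: a level $u$ that is crossed by the continuous part of $C$ contributes $Z_{\tau(u)}$ to $\int Z_s\,\mdS C_s$ (the continuous part is integrated against $Z$, not against a left limit), whereas you assign such $u$ to $H^-_u$, where the divided-stopping-time evaluation is $\lsl{Z}_{\tau(u)}$; since there is no pointwise comparison between $Z_{\tau(u)}$ and $\lsl{Z}_{\tau(u)}$ in the direction you need, the claimed equality $\EW[\int Z\,\mdS C]=\int_0^1\EW[Z_{\tau(u)}]\,du$ can fail and the chain of inequalities breaks. The repair is to put every attained crossing (continuous part or left jump, i.e.\ $C_{\tau(u)}\geq u$) into the middle component $H_u$, whose evaluation is $Z_{\tau(u)}$; this requires verifying that $\tau(u)$ restricted to $\{C_{\tau(u)}\geq u\}$ is a $\Lambda$-stopping time, which does hold because its graph is contained in the $\Lambda$-set $\{C\geq u\}$ (the debut-type corollary in \cite{EL80} used elsewhere in the paper), but this is exactly the step you defer as ``the hard part'', so the inequality is not established as written. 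Your final step, identifying $\EW[\int Z\,\mdS\hat C]-\EW[Z_{\hat\tau}]=\EW[(Z_{\hat T}-\lsl{Z}_{\hat T})\mathbb{1}_{\hat H^-}]\geq 0$ under left-upper-semicontinuity in expectation, is sound modulo the same cited approximation result and matches the paper's use of $\lsl{Z}_S\leq{^{\mathcal P}Z}_S$ at predictable times.
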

	
	\begin{proof}
	By \citing{EK81}{Theorem 2.39}{138}, there exists an optimal
	divided stopping time $\hat{\tau}=
	(\hat{T},\hat{H}^{-},\hat{H},\hat{H}^{+})$ for the left-hand side of~\eqref{app:3}. By definition
	$S:=\hat{T}_{\hat{H}^-}$ is an $\FA$-predictable stopping time. Note that $S>0$ as $\{\hat{T}=0\}\cap \hat{H}^-=\emptyset$ by definition of a divided stopping time.  Hence,  by \BBG,\ there exists a sequence $(S_n)_{n\in \NZ}$ of $\Lambda$-stopping times 
	such that $S_n<S$ for all $n \in \NZ$, $\lim_{n\rightarrow \infty}S_n=S$ and $\lim_{n\rightarrow \infty} Z_{S_n}=
	{^\ast Z}_S$. 
	Now define, for $n\in\NZ$, 
	\begin{align}
	    \hat{C}^n:=
			\bigg(&{^\Lambda \left(\mathbb{1}_{\hat{H}^-}\right)_{S_n}}
			 \mathbb{1}_{\stsetRO{S_n}{\hat{T}_{\hat{H}^-}}}
			+\mathbb{1}_{\stsetRO{\hat{T}_{\hat{H}^-}}{\infty}} \\
			&+{^\Lambda \left(\mathbb{1}_{(\hat{H}^-)^c}\right)_{\hat{T}_{\hat{H}}\wedge S_n}}
			 \mathbb{1}_{\stsetRO{\hat{T}_{\hat{H}}}{\infty}}
			+\mathbb{1}_{(\hat{H}^-)^c}
			 \mathbb{1}_{\stsetO{\hat{T}_{\hat{H}^+}}{\infty}}\bigg)\wedge 1.
	\end{align}
	Clearly, $\hat{C}^n$ is $\Lambda$-measurable taking values in $[0,1]$; moreover, it is increasing which we verify separately on each part of the partition $\Omega=\hat{H}^-\cup\hat{H}\cup\hat{H}^+$: On the set $\hat{H}^-$, we have
	    \begin{align}
	    \hat{C}^n
			={^\Lambda \left(\mathbb{1}_{\hat{H}^-}\right)_{ S_n}}
			 \mathbb{1}_{\stsetRO{S_n}{\hat{T}}}
			+\mathbb{1}_{\stsetRO{\hat{T}}{\infty}}\leq 1; \label{Main:95}
	\end{align}
     on the set $\hat{H}$, we have
	    \begin{align}
	    \hat{C}^n
	    &={^\Lambda \left(\mathbb{1}_{\hat{H}^-}\right)_{S_n}}
			 \mathbb{1}_{\stsetRO{S_n}{\infty}}
			+{^\Lambda \left(\mathbb{1}_{(\hat{H}^-)^c}\right)_{\hat{T}\wedge S_n}}
			 \mathbb{1}_{\stsetRO{\hat{T}}{\infty}}\\
			&
			=
			 {^\Lambda \left(\mathbb{1}_{\hat{H}^-}\right)_{S_n}}\mathbb{1}_{\stsetRO{S_n}{\hat{T}}}
			+{^\Lambda \left(\mathbb{1}_{(\hat{H}^-)^c}\right)_{\hat{T}}}\mathbb{1}_{\stsetRO{\hat{T}}{S_n}}
			+\mathbb{1}_{\stsetRO{S_n\vee\hat{T}}{\infty}}
			\leq 1;\label{Main:98}
	\end{align}
    finally, on the set $\hat{H}^+$, we have
	\begin{align}
	    \hat{C}^n
	    &=\left({^\Lambda \left(\mathbb{1}_{\hat{H}^-}\right)_{S_n}}
			 \mathbb{1}_{\stsetRO{S_n}{\infty}}
			+\mathbb{1}_{(\hat{H}^-)^c}\label{Main:99}
			 \mathbb{1}_{\stsetO{\hat{T}}{\infty}}\right)\wedge 1\\
	&=
			 \mathbb{1}_{\stsetRO{S_n}{\infty}\cap \stsetO{\hat{T}}{\infty}}
			 +{^\Lambda \left(\mathbb{1}_{\hat{H}^-}\right)_{S_n}}\nonumber
			 \mathbb{1}_{\stsetC{S_n}{\hat{T}}}
			+
			 \mathbb{1}_{\stsetO{\hat{T}}{S_n}}.
	\end{align}
	It remains to show that 
	\begin{align}\label{Main:224}
		\EW[Z_{\hat{\tau}}]=\lim_{n\rightarrow \infty} \EW\left[\int_{[0,\infty)}
	Z_s\mdS \hat{C}_s^n\right],
	\end{align}
	to conclude ``$\leq$'' in~\eqref{app:3}.
				
To this end, we first calculate with~\eqref{Main:95}, \eqref{Main:98} and~\eqref{Main:99} that
		\begin{align*}
		     \int_{[0,\infty)}
		Z_s\mdS \hat{C}_s^n
		=\ &\mathbb{1}_{\{\hat{T}=\infty\}}{^\Lambda \left(\mathbb{1}_{\hat{H}^-}\right)_{S_n}Z_{S_n}}\\
		&+\mathbb{1}_{\{\hat{T}<\infty\}}\mathbb{1}_{\hat{H}^-}\left({^\Lambda \left(\mathbb{1}_{\hat{H}^-}\right)_{S_n}Z_{S_n}}
		+\left(1-{^\Lambda \left(\mathbb{1}_{\hat{H}^-}\right)_{S_n}}\right)Z_{\hat{T}}\right)\\
		&+\mathbb{1}_{\{\hat{T}<\infty\}}\mathbb{1}_{\hat{H}\cap\{S_n\leq \hat{T}\}}\left({^\Lambda \left(\mathbb{1}_{\hat{H}^-}\right)_{S_n}Z_{S_n}}+\left(1-{^\Lambda \left(\mathbb{1}_{\hat{H}^-}\right)_{S_n}}\right)Z_{\hat{T}}\right)\\
		&+\mathbb{1}_{\{\hat{T}<\infty\}}\mathbb{1}_{\hat{H}\cap\{S_n> \hat{T}\}}\left({^\Lambda \left(\mathbb{1}_{(\hat{H}^-)^c}\right)_{\hat{T}}Z_{\hat{T}}}+\left(1-{^\Lambda \left(\mathbb{1}_{(\hat{H}^-)^c}\right)_{\hat{T}}}\right)Z_{S_n}\right)\\
		&+\mathbb{1}_{\hat{H}^+\cap\{S_n\leq \hat{T}\}}\left({^\Lambda \left(\mathbb{1}_{\hat{H}^-}\right)_{S_n}Z_{S_n}}+\left(1-{^\Lambda \left(\mathbb{1}_{\hat{H}^-}\right)_{S_n}}\right)\lsr{Z}{\hat{T}}\right)\\
		&+\mathbb{1}_{\hat{H}^+\cap\{S_n> \hat{T}\}}\lsr{Z}{\hat{T}},
		\end{align*}
		where we have used that $\hat{H}^+\cap \{\hat{T}=\infty\}=\emptyset$.
		Hence, from
		\begin{align*}
			\lim_{n\rightarrow \infty} {^\Lambda \left(\mathbb{1}_{\hat{H}^-}\right)_{S_n}}&=\EW\left[\mathbb{1}_{\hat{H}^-}| \FA_{S-}\right]
			=\mathbb{1}_{\hat{H}^-},\\
			\lim_{n\rightarrow \infty}\mathbb{1}_{\{S_n\leq \hat{T}\}}&=\mathbb{1}_{\hat{H}^-\cup\{\hat{T}=\infty\}},\quad
			\lim_{n\rightarrow \infty}\mathbb{1}_{\{S_n> \hat{T}\}}=\mathbb{1}_{(\hat{H}^-)^c\cap \{\hat{T}<\infty\}},
		\end{align*}
		we obtain, since
		$\hat{H}^-,\hat{H},\hat{H}^+$ form a partition of
		$\Omega$, that
		\begin{align}
		    \lim_{n\rightarrow \infty} \EW\left[\int_{[0,\infty)}
			Z_s\mdS \hat{C}_s^n\right]
			=&\ \EW\Big[ \mathbb{1}_{\hat{H}^-}\lsl{Z}_{\hat{T}}+\mathbb{1}_{\hat{H}^+}\lsr{Z}{\hat{T}}+\mathbb{1}_{\hat{H}}Z_{\hat{T}}
			\Big],
		\end{align}
		where we used $(\hat{H}^-)^c=\in \aFA_{\hat{T}_H}$ and $Z_\infty=0$. This is our assertion~\eqref{Main:224}.
		
		We show next  ``$\geq$'' in~\eqref{app:3}. 
		Denote by $\bar{Z}$ El Karoui's 
		$\Lambda$-Snell envelope of $Z$ (see \BBH), which is l\`adl\`ag (see \BBI), 
		of class$(D^\Lambda)$ (see \BBJ) and by \BBK,\ we have
		\begin{align}\label{app:2}
		    \EW[\bar{Z}_0]=\sup_{\tau \in \stmd}
			\EW[Z_\tau].
		\end{align}
		Furthermore, we can decompose $\bar{Z}=\bar{M}-\bar{A}$ into	a $\Lambda$-martingale $\bar{M}$ of class($D^\Lambda$) 
		and  an increasing $\Lambda$-measurable process $\bar{A}$
		with $\bar{A}_{0}=0$ and $\EW[\bar{A}_{\infty}]<\infty$ (see \BBL). By $\bar{Z}_\infty=Z_\infty=0$, or equivalently, by
		$\bar{A}_\infty=\bar{M}_\infty$ we obtain for
		any $\Lambda$-stopping time $T$ that
		$				    \left({^\Lambda \bar{A}_\infty}\right)_T
		    =\EW[\bar{A}_\infty|\aFA_T]
		    =\bar{M}_T$.
		Hence, for any $C\in \mathcal{C}$ we obtain by using $\bar{Z}=\left({^\Lambda \bar{A}_\infty}\right)-\bar{A}$ and Proposition~\ref{app:5} below that
		\begin{align}
		    \EW\left[\int_{[0,\infty)}
			Z_s\mdS C_s\right]&\leq \EW\left[\int_{[0,\infty)}
			\bar{Z}_s\mdS C_s\right]
			= \EW\left[\int_{[0,\infty)}
			\bar{A}_\infty-\bar{A}_s \mdS C_s\right]
			\leq \EW\left[\bar{A}_\infty\right]=
			\EW\left[\bar{Z}_0\right],
		\end{align}
		which shows with the help of~\eqref{app:2}
	that also ``$\geq$'' is satisfied in~\eqref{app:3}.
		
		If, additionally, $Z$ is left-upper-semicontinuous
		in expectation at every predictable $\FA$-stopping time,
		we obtain by \BBM,\ that $^\ast Z_S\leq {^\mathcal{P} Z_S}$ at any predictable
		$\FA$-stopping time $S$. Hence, we can assume without loss of generality that the optimal stopping time
		$\hat{\tau}$ is of the form $\hat{\tau}=(\hat{T},\emptyset,
		H,H^+)$. Then we have
		\[
			\EW[Z_{\hat{\tau}}]=\EW\left[\int_{[0,\infty)}
		Z_s\mdS \hat{C}_s\right],
		\]
		which shows the result by~\eqref{app:3}.
	\end{proof}

\paragraph{Two classical integration results for the $\mdS$-integral.}

Let us first verify that Fubini's theorem is still valid for
a specific class of integrands.

\begin{Pro}[Fubini's Theorem for $\mdS$-Integrals]\label{app:1}
	Let $A,B:[0,\infty)\rightarrow \RZ$ be two increasing functions 
	 with $B$ right-continuous
	and $A_{0-},B_{0-}\in \RZ$.
	Additionally, let $\phi:=\phi_{s,t}:[0,\infty)\times[0,\infty)\rightarrow \RZ$ be a measurable function, which 
	admits right limits in the first argument $s\in [0,\infty)$ for $t\in [0,\infty)$ fixed and which satisfies
	\begin{align}\label{main:302}
		\int_{[0,\infty)}\sup_{s\in [0,\infty)}|\phi_{s,t}|\md B_t<\infty
	\end{align}			
	
	\begin{enumerate}[label=(\roman*)]
		\item If $\phi\geq 0$ or $\int_{[0,\infty)}
					\int_{[0,s]} |\phi_{s,t}|
						\ \md B_t \mdS A_s<\infty$
			    or	
		$\int_{[0,\infty)}
					\int_{[t,\infty)} |\phi_{s,t}|
						\  \mdS A_s \md B_t<\infty,
		$	
			  then
				 \[
				 	\int_{[0,\infty)}
							\int_{[0,s]} \phi_{s,t}
								 \, \md B_t\,\mdS A_s=
								\int_{[0,\infty)}
							\int_{[t,\infty)}\phi_{s,t}\,
								\mdS A_s \md B_t. 
				\]		 
		\item If $\phi\geq 0$ or $\int_{[0,\infty)}
					\int_{[s,\infty)} |\phi_{s,t}|
						\ \md B_t \mdS A_s<\infty$
			  or	
			  $\int_{[0,\infty)}
					\int_{[0,t]} |\phi_{s,t}|
						\  \mdS A_s \md B_t<\infty,$
			  then 
			  \[
			 	\int_{[0,\infty)}
						\int_{[s,\infty)} \phi_{s,t}
							 \, \md B_t\,\mdS A_s=
							\int_{[0,\infty)}
						\int_{[0,t]}\phi_{s,t}\,
							\mdS A_s \md B_t.
			  \]	
	\end{enumerate}
\end{Pro}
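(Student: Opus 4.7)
The plan is to split the outer $\mdS A$-integral into its $\ldc{A}$- and $\rdc{A}$-parts according to Definition~\ref{Main:139} and then invoke classical Fubini--Tonelli on each piece separately. Set $\Phi_s:=\int_{[0,s]}\phi_{s,t}\,\md B_t$ for part~(i) and $\Psi_s:=\int_{[s,\infty)}\phi_{s,t}\,\md B_t$ for part~(ii); the left-hand sides unfold as $\int\Phi\,\md\ldc{A}+\int\Phi^\ast\,\md\rdc{A}$ and $\int\Psi\,\md\ldc{A}+\int\Psi^\ast\,\md\rdc{A}$, respectively.

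The crucial step is identifying the right-upper-semicontinuous envelopes. Because $s\mapsto\phi_{s,t}$ admits right limits for every $t$, one has $\phi^\ast_{s,t}=\lim_{u\downarrow s}\phi_{u,t}$ pointwise. Together with right-continuity of $B$ (so that $\md B([0,u])\to\md B([0,s])$ and $\md B([u,\infty))\to\md B((s,\infty))$ as $u\downarrow s$) and the dominating hypothesis~\eqref{main:302}, dominated convergence yields
\[
\Phi^\ast_s=\int_{[0,s]}\phi^\ast_{s,t}\,\md B_t,\qquad \Psi^\ast_s=\int_{(s,\infty)}\phi^\ast_{s,t}\,\md B_t.
\]
The closed versus open endpoint at $s$ here is exactly what matches the endpoint conventions of Definition~\ref{Main:139} when the right-hand sides are unfolded: in~(i), $\int_{[t,\infty)}\phi^\ast_{s,t}\,\md\rdc{A}_s$ produces $\{t\in[0,s]\}$ after swapping; in~(ii), the strict upper endpoint in $\int_{[0,t)}\phi^\ast_{s,t}\,\md\rdc{A}_s$ produces $\{t\in(s,\infty)\}$.

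Once the envelopes are pinned down, each of the four resulting double integrals is a classical Lebesgue--Stieltjes object to which Tonelli applies when $\phi\ge 0$, and Fubini under any of the three stated absolute-integrability hypotheses: the pre-swap forms transfer directly, and for the post-swap form the dominating hypothesis ensures the cross-section integrals are well defined before interchanging. After swapping orders and reassembling the $\ldc{A}$- and $\rdc{A}$-contributions via Definition~\ref{Main:139} applied to the inner $\int\phi_{s,t}\,\mdS A_s$, one reads off the right-hand side of~(i) respectively~(ii).

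The main obstacle is the envelope identification: obtaining $\Phi^\ast$ and $\Psi^\ast$ with the correct open/closed endpoint conventions is what forces and justifies the range asymmetry $[t,\infty)$ versus $[0,t]$ in the two statements, and it is the only place where the right-limit-in-$s$ assumption and the right-continuity of $B$ really enter. Everything else is careful bookkeeping: joint measurability of $\phi^\ast$ is automatic from $\phi^\ast_{s,t}=\lim_n\phi_{s+1/n,t}$, and the three integrability hypotheses align directly with the standard Fubini--Tonelli premises on each of the four pieces.
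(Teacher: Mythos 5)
Your proposal is correct and follows essentially the same route as the paper's proof: your envelope identities $\Phi^\ast_s=\int_{[0,s]}\phi^\ast_{s,t}\,\md B_t$ and $\Psi^\ast_s=\int_{(s,\infty)}\phi^\ast_{s,t}\,\md B_t$ are exactly the paper's preliminary observation (its equation~\eqref{main:303}, obtained from the right limits in $s$ and dominated convergence under~\eqref{main:302}), after which both arguments apply the classical Fubini--Tonelli theorem separately to the $\ldc{A}$- and $\rdc{A}$-pieces and reassemble via Definition~\ref{Main:139}, with the open/closed endpoint bookkeeping you describe. No gaps.
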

\begin{proof}
	Observe that because of 	\eqref{main:302} we have for any $s\in [0,\infty)$ that
		\begin{align}\label{main:303}
		\int_{[0,s]} \phi_{s+,t}
					 \, \md B_t=\lim_{u\downarrow s}\int_{[0,u]} \phi_{u,t}
					 \, \md B_t\quad \text{ and }\quad
		\int_{(s,\infty)} \phi_{s+,t}
					 \, \md B_t=\lim_{u\downarrow s}\int_{[u,\infty)} \phi_{u,t}
					 \, \md B_t.
	\end{align}				
	Now, recalling the notation $\ldc{A}$, $\rdc{A}$ from~\eqref{Main:4}, we get by ~\eqref{main:303} result (i) from the standard Fubini Theorem:
	\begin{align*}
		\int_{[0,\infty)}
				\int_{[0,s]} \phi_{s,t}
					 \, \md B_t\,\mdS A_s
					 &=\int_{[0,\infty)}
				\int_{[0,s]} \phi_{s,t}
					 \, \md B_t\,\md \ldc{A}_s+
					 \int_{[0,\infty)}
				\int_{[0,s]} \phi_{s+,t}
					 \, \md B_t\,\md \rdc{A}_s\\
					 &=\int_{[0,\infty)}
				\int_{[t,\infty)} \phi_{s,t}
					\md \ldc{A}_s \, \md B_t+
					 \int_{[0,\infty)}
				\int_{[t,\infty)} \phi_{s+,t}
					 \md \rdc{A}_s \, \md B_t\\	
					&=\int_{[0,\infty)}
				\int_{[t,\infty)}\phi_{s,t}\,
					\mdS A_s \md B_t,					 
	\end{align*}
	where we have used the definition~\eqref{Main:4} in the last step.
	Analogously we obtain by~\eqref{main:303} result (ii).
	
\end{proof}

Next we show that we can replace a suitable process $\phi$
inside of the extended integral by 
the Meyer-projection of this process.

\begin{Pro}\label{app:5}
	Fix a $\Lambda$-measurable increasing process 
	$A:\Omega\times [0,\infty)\rightarrow \RZ$ 
	and an $\FA\otimes \mathcal{B}([0,\infty)$-measurable 
	process $\phi:\Omega\times [0,\infty)\rightarrow \RZ$, such
	that 
	\[
		\EW\left[\int_{[0,\infty)}
			|\phi_t|\ \mdS A_t\right]<\infty.
	\]
	Then we have that
	\[
		\EW\left[\int_{[0,\infty)}
			\phi_t\ \mdS A_t \right]
		\geq				\EW\left[\int_{[0,\infty)}
			{^\Lambda \phi_t}\ \mdS A_t \right]
	\]
	with equality if $\phi$ admits limits from the right or if $A$ is right-continuous.
\end{Pro}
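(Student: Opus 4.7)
My plan is to split the $\mdS$-integral into its two constituent pieces via Definition~\ref{Main:139}:
\[
\int_{[0,\infty)}\phi_v\mdS A_v = \int_{[0,\infty)}\phi_v\md\ldc{A}_v + \int_{[0,\infty)}\phi_v^\ast\md\rdc{A}_v,
\]
and to apply Definition and Theorem~\ref{Main:154} to each piece. A preliminary step is to observe that, because $A$ is $\Lambda$-measurable and increasing, the right-continuous processes $\ldc{A}$ and $\rdc{A}$ from~\eqref{Main:143} are again $\Lambda$-measurable; this uses the $\WM$-completeness of $\Lambda$ and the fact that both can be obtained as measurable limits of functionals of $A$.

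Given this, the $\md\ldc{A}$-contributions to $\EW[\int\phi\mdS A]$ and to $\EW[\int{^\Lambda\phi}\mdS A]$ coincide by the projection identity of Definition and Theorem~\ref{Main:154} and hence cancel. Applying the same identity to $\phi^\ast$ rewrites $\EW[\int\phi^\ast\md\rdc{A}]$ as $\EW[\int{^\Lambda(\phi^\ast)}\md\rdc{A}]$, so the asserted inequality reduces to
\[
\EW\left[\int_{[0,\infty)}\left({^\Lambda(\phi^\ast)}_v-({^\Lambda\phi})^\ast_v\right)\md\rdc{A}_v\right]\geq 0.
\]
Because $\rdc{A}$ is concentrated on countably many $\FA$-stopping times $(T_n)_{n\in\NZ}$ exhausting the right-jumps of $A$, it suffices to check $\EW[{^\Lambda(\phi^\ast)}_{T_n}\Delta\rdc{A}_{T_n}]\geq\EW[({^\Lambda\phi})^\ast_{T_n}\Delta\rdc{A}_{T_n}]$ for each $n$. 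Using the characterization ${^\Lambda(\phi^\ast)}_S=\EW[\limsup_{s\downarrow S}\phi_s\,|\,\aFA_S]$ at any $\Lambda$-stopping time $S$, the conditional reverse Fatou lemma (whose integrability hypothesis is supplied by the assumption on $|\phi|$ against $\mdS A$), and the tower property $\EW[\phi_s\,|\,\aFA_S]=\EW[{^\Lambda\phi}_s\,|\,\aFA_S]$ valid for $s>S$ thanks to $\aFA_S\subset\aFA_s$, combined with the $\aFA_{S+}$-measurability of $\Delta\rdc{A}_S$ on its support, gives the required pointwise estimate.

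The two equality claims are then immediate. If $A$ is right-continuous, then $\rdc{A}\equiv 0$ and only the $\md\ldc{A}$-piece survives, which we have already seen to yield equality. If $\phi$ admits right limits, then $\phi^\ast_t=\phi_{t+}$ is a genuine pointwise limit, so the conditional reverse Fatou step sharpens to an identity via the conditional dominated convergence theorem, and equality propagates through the computation above. The main obstacle I anticipate is this careful bookkeeping in comparing ${^\Lambda(\phi^\ast)}$ with $({^\Lambda\phi})^\ast$: reverse Fatou only yields the inequality in the desired direction after one interchanges a limsup with a conditional expectation, which is why the reduction must be performed atom-by-atom against $\rdc{A}$, exploiting that $\Delta\rdc{A}_S$ lies in $\aFA_{S+}$. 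The routine $\Lambda$-measurability of $\ldc{A}$ and $\rdc{A}$ is less deep but also needs to be verified before the projection identity can be invoked.
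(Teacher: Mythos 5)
Your treatment of the $\md\ldc{A}$-part and of the case of right-continuous $A$ (where $\rdc{A}\equiv 0$) is essentially sound, and is close in spirit to the paper, which handles this piece by time-changing with the right inverse of $\ldc{A}$, a $\Lambda$-stopping time. (Even there, justify the $\Lambda$-measurability of $\ldc{A}$ via $\ldc{A}=A-\rdc{A}_-$ with $\rdc{A}_-$ predictable rather than by a vague limit argument.) The genuine gap is in the $\md\rdc{A}$-part, and it starts with your preliminary claim: $\rdc{A}_t=\sum_{s\le t}\Delta^+A_s$ is in general only $\mathcal{O}(\FA)$-measurable, \emph{not} $\Lambda$-measurable. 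For instance, with $\Lambda=\mathcal{P}(\FA)$ and $A=\mathbb{1}_{\stsetO{T}{\infty}}$ for a totally inaccessible $\FA$-stopping time $T$, the process $A$ is predictable and increasing, but $\rdc{A}=\mathbb{1}_{\stsetRO{T}{\infty}}$ is not. Consequently Definition and Theorem~\ref{Main:154} cannot be invoked with integrator $\rdc{A}$, and your rewrite $\EW[\int\phi^\ast\,\md\rdc{A}]=\EW[\int{^\Lambda(\phi^\ast)}\,\md\rdc{A}]$ is not merely unjustified but false in general: in the example it would equate $\EW[\phi^\ast_T\mathbb{1}_{\{T<\infty\}}]$ with $\EW[{^\mathcal{P}(\phi^\ast)}_T\mathbb{1}_{\{T<\infty\}}]$. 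The same issue undermines the atom-by-atom step: the times $T_n$ carrying the right jumps are $\FA$-stopping times but in general not $\Lambda$-stopping times (in the example $T$ is totally $\Lambda$-inaccessible), so the characterization ${^\Lambda(\phi^\ast)}_{T_n}=\EW[\limsup_{s\downarrow T_n}\phi_s\mid\aFA_{T_n}]$ and the ensuing conditional reverse Fatou/tower manipulations have nothing to stand on.

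What the right-jump part actually requires — and what the paper does — is to pass to the \emph{optional} projection, $\EW[\int\phi^\ast\,\md\rdc{A}]=\EW[\int{^\mathcal{O}(\phi^\ast)}\,\md\rdc{A}]$ (legitimate because $\rdc{A}$ is optional; the paper implements it by time-changing with the inverse $\rdc{T}$, which is only an $\FA$-stopping time), and then to invoke the comparison ${^\mathcal{O}(\phi^\ast)}\ge({^\Lambda\phi})^\ast$ at stopping times, with equality when $\phi$ admits right limits; this is the cited Proposition~3.7 of the companion paper. That comparison is the real content of both the inequality and the equality case ``$\phi$ has right limits'', and it does not follow from reverse Fatou plus the tower property, because ${^\Lambda\phi}_s$ evaluated along random times $s\downarrow T$ is not a conditional expectation given $\aFA_T$ (nor $\FA_T$). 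Unless you prove such a comparison result yourself, your argument does not close.
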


\begin{proof}
	By the integrability assumption we can assume without loss of generality 
	 that the process $\phi$	is bounded. Otherwise we could consider $\phi\wedge M$
	and use dominated convergence for Lebesgue integrals 
	to let $M$ tend to $\infty$ afterwards.		

	Now we use the idea of the proof of \citing{JS03}{Lemma 3.12}{29}, and introduce the inverses of $\ldc{A}$ and
	$\rdc{A}$ by 
	\[
		\ldc{T}_t:=\inf\{s\in [0,\infty)|\ldc{A}_s\geq t\},\quad \rdc{T}_t:=\inf\{s\in [0,\infty)|\rdc{A}_s\geq t\},\quad t\in [0,\infty).
	\]
	As $A$ is $\Lambda$-measurable, $\ldc{T}_t$ and $\rdc{T}_t$ define $\FA$-stopping times. Additionally 
	\[
		\ldc{A}_{\ldc{T}_t}\geq t\quad \text{ on } \quad \{\ldc{T}_t<\infty\}.
	\] 
	Hence, we get by 
	\citing{EL80}{Corollary 2}{504}, that $\ldc{T}$ is even
	a $\Lambda$-stopping time. Next, we have by definition
	\[
		\EW\left[\int_{[0,\infty)}
			\phi_t\ \mdS A_t \right]
		=	\EW\left[\int_{[0,\infty)}
			\phi_t\ \md \ldc{A}_t \right]+\EW\left[\int_{[0,\infty)}
			\phi_t^\ast\ \md \rdc{A}_t \right]
	\]
	and we will show separately
	\begin{align}
		\EW\left[\int_{[0,\infty)}
			\phi_t\ \md \ldc{A}_t \right]&=\EW\left[\int_{[0,\infty)}
			{^\Lambda \phi}_t\ \md \ldc{A}_t \right],\label{app:6}\\
		\EW\left[\int_{[0,\infty)}
			\phi_t^\ast\ \md \rdc{A}_t \right]&\geq \EW\left[\int_{[0,\infty)}
			\left(^\Lambda \phi\right)_t^\ast\ \md \rdc{A}_t \right],\label{app:7}
	\end{align}
	where we have equality in the latter inequality if $\phi$ admits limits from the right.
	
	For~\eqref{app:6} we follow the time-change argument of \citing{JS03}{Lemma 3.12}{29}, and use Fubini's theorem and the definition of the $\Lambda$-projection (see Definition and Theorem~\ref{Main:154}) to obtain			
	\begin{align*}
		\EW\left[\int_{[0,\infty)}
			\phi_t\ \md \ldc{A}_t \right]&=
			\int_{[0,\infty)}\EW\left[
			\phi_{\ldc{T}_t}\mathbb{1}_{\{\ldc{T}_t<\infty\}}\ \right]\md t\\
			&=
			\int_{[0,\infty)}\EW\left[
			{^\Lambda \phi}_{\ldc{T}_t}\mathbb{1}_{\{\ldc{T}_t<\infty\}}\ \right]\md t  
			=\EW\left[\int_{[0,\infty)}
			{^\Lambda \phi}_t\ \md \ldc{A}_t \right].
	\end{align*}				
 Similarly, we get~\eqref{app:7} by
	\begin{align*}
		\EW\left[\int_{[0,\infty)}
			\phi_t^\ast\ \md \rdc{A}_t \right]&=
			\int_{[0,\infty)}\EW\left[
			{^\mathcal{O} (\phi^\ast)}_{\rdc{T}_t}
			\mathbb{1}_{\{\rdc{T}_t<\infty\}}\ \right]\md t 	\\
		&\geq \int_{[0,\infty)}\EW\left[
			\left({^\Lambda \phi}\right)_{\rdc{T}_t}^\ast 
			\mathbb{1}_{\{\rdc{T}_t<\infty\}}\ \right]\md t 	=
		\EW\left[\int_{[0,\infty)}
			\left(^\Lambda \phi\right)_t^\ast\ \md \rdc{A}_t \right].
	\end{align*}
	Here, the inequality is due to
	 \BBF, which would give us 
	equality if $\phi$ admits limits from the right.
\end{proof}

\begin{Rem}
Proposition~\ref{app:1} and Proposition~\ref{app:5} remain valid with reversed inequality 
if we consider $\mdST$ instead of $\mdS$ (see the end of Definition~\ref{Main:139}).
\end{Rem}		
		
\paragraph{Relation of our $\mdS$-integral to similar definitions
in the literature.} Let us compare our definition of an integral
over l\`ad\`ag integrators to other definitions proposed in 
\cite{CS14} (following \cite{CS06}), \cite{GLR12} and  \cite{EL80}. 

\emph{Comparison to \cite{CS14} and \cite{GLR12}:} In \cite{CS14}, p.4-5,
the integral with respect to a l\`adl\`ag process
$\phi$ and a process $A$ of bounded variation with $A_0=A_{0-}$
is defined as
\[
	\int_{[0,t]} \phi_v\ {^{\text{\tiny CS}\!} \md} A_v:=
	\int_{[0,t]}\phi_v\md A^c_v
						+\sum_{v\in (0,t]}\phi_{v-}\Delta^- A_v
						+\sum_{v\in [0,t)} \phi_{v} \Delta^+ A_v,\
						t\in [0,\infty).
\]
with $\Delta^- A_v:=A_v-A_{v-}$ and  $\Delta^+ A_v:=A_{v+}-A_{v}$. Additionally they also define  the integral with integrator
$\phi$ in such a way that an integration by parts formula is satisfied. 
One can then calculate that
\[
	\int_{[0,t]} \phi_v\ {^{\text{\tiny CS}\!} \md} A_v=
	\int_{[0,t]} \phi_u \md A_{u-}-
	\sum_{0< u\leq t} \Delta^- \phi_u\Delta^-A_u
\]
and the right-hand side is actually the definition used 
in \citing{GLR12}{Definition A.6}{766}, apart from the fact 
that \cite{GLR12} additionally assume that $\phi$ has to be
right-continuous. For right-continuous $\phi$,
we get 
\[
	\int_{[0,t]} \phi_v\ {^{\text{\tiny CS}\!} \md} A_v=
	\int_{[0,t]} \phi_{v-}\, \mdS A_v,
\]
which also shows the main difference between the integral 
definition of \cite{CS14}, \cite{GLR12} and our definition: In the
definition of \cite{CS14} the process $A$ is integrated
against the ``previous'' values of $\phi$. Our definition differs to be suitable for the use in our  irreversible investment problem with inventory risk and to connect the target functional $\Vn$ to the relaxed one $\Vm$ (see Proposition~\ref{Main:307}).

\emph{Comparison to \cite{EL80}:} 
As \cite{EL80} lays the foundation for Meyer-$\sigma$-fields 
we also want to mention the special integral proposed in that article: For a measurable and locally bounded process
$\phi$ and a process $A$ of bounded variation, \cite{EL80} lets
\[
	\int_{[0,t]} \phi_v\, \ {^{\text{\tiny L}\!} \md} A_v
	:= \int_{[0,t]} \phi_v\, \md A_{v+} - \phi_t \Delta^+ A_t
\]
and we have
\[
	\int_{[0,t]} \phi_v\, \ {^{\text{\tiny L}\!} \md} A_v
	=\int_{[0,t]} \phi_v\, \mdS A_v
	-\sum_{v<t} (\phi_v^\ast-\phi_v)\Delta^+ A_v.
\]
The latter equation shows that the main difference to our $\mdS$-integral
results from a different treatment of the jumps $\Delta^+ A$.
\end{appendix}


 \section*{Acknowledgements}
We would like to thank two anonymous referees for their very helpful corrections and constructive comments. We also would like to thank Tom Renker for helping us with the numerical simulations in Section~\ref{sec:ill}.
 


\bibliographystyle{imsart-nameyear}  
\bibliography{Literature_ALL}       


\end{document}